\documentclass{article}

\usepackage{graphicx}
\usepackage{hyperref}
\usepackage{amsfonts, amsmath, amsthm}
\usepackage[margin=3cm]{geometry}
\usepackage{tikz}
\usetikzlibrary{calc, arrows, decorations.markings, decorations.pathmorphing, positioning, decorations.pathreplacing}
\makeatletter
\tikzset{nomorepostaction/.code={\let\tikz@postactions\pgfutil@empty}}
\makeatother

\usepackage{fancyhdr}
\setcounter{tocdepth}{1}

\newtheorem{thm}{Theorem}[section]
\newtheorem{cor}[thm]{Corollary}
\newtheorem{lem}[thm]{Lemma}
\newtheorem{prop}[thm]{Proposition}

\newtheorem{defn}[thm]{Definition}

\newcommand{\To}{\longrightarrow}

\newcommand{\Z}{\mathbb{Z}}

\newcommand{\0}{{\bf 0}}
\newcommand{\1}{{\bf 1}}

\DeclareMathOperator{\Byp}{Byp}

\DeclareMathOperator{\IIm}{Im}

\pagestyle{fancy}
\lhead{}

\begin{document}

\title{Dimensionally-reduced sutured Floer homology as a string homology}

\author{Daniel V. Mathews and Eric Schoenfeld}%

\date{}

\maketitle

\begin{abstract}
We show that the sutured Floer homology of a sutured 3-manifold of the form $(D^2 \times S^1, F \times S^1)$ can be expressed as the homology of a string-type complex, generated by certain sets of curves on $(D^2, F)$ and with a differential given by resolving crossings. We also give some generalisations of this isomorphism, computing ``hat'' and ``infinity'' versions of this string homology. In addition to giving interesting elementary facts about the algebra of curves on surfaces, these isomorphisms are inspired by, and establish further, connections between invariants from Floer homology and string topology.
\end{abstract}

\tableofcontents

\section{Introduction}

On the one hand, this paper is about an interesting combinatorial/topological fact about curves on surfaces. On the other hand, it establishes some connections between invariants of $3$-manifolds from contact topology, Floer homology, and string topology.

\subsection{A combinatorial question about curves on surfaces}

We consider the following simple question. Fix an oriented surface $\Sigma$ and a finite set of signed points $F$ on $\partial \Sigma$. Consider sets $s$ of immersed curves on $\Sigma$ with $\partial s = F$. These sets of curves, which we call \emph{string diagrams}, consist of immersed closed curves and immersed arcs with boundary points on $F$. Take the $\Z_2$ vector space spanned by homotopy classes of string diagrams. (Several meanings of ``homotopy'' are possible here, as we will see.) On this vector space, there is a \emph{differential} $\partial$ defined by \emph{resolving crossings}, as shown:

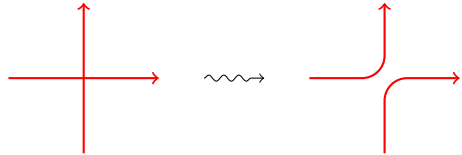
\begin{figure}[h]
\begin{center}
\begin{tikzpicture}[scale=1, string/.style={thick, draw=red, -to}]

\draw [string] (-1,0) -- (1,0);
\draw [string] (0,-1) -- (0,1);

\draw [shorten >=1mm, -to, decorate, decoration={snake,amplitude=.4mm, segment length = 2mm, pre=moveto, pre length = 1mm, post length = 2mm}]
(1.5,0) -- (2.5,0);

\draw [string] (3,0) -- (3.7,0) to [bend right=45] (4,0.3) -- (4,1);
\draw [string] (4,-1) -- (4,-0.3) to [bend left=45] (4.3,0) -- (5,0);
\end{tikzpicture}
\caption{Resolving a crossing}
\label{fig:resolving_crossing}
\end{center}
\end{figure}

One can show that, with appropriate definitions of the words above, this is a chain complex, and hence has a homology, which we call \emph{string homology}. (Notwithstanding other uses of this word: \cite{CS}.) The question is: What is the homology? We will give some answers to this question for two variants of the definitions --- which we shall define in due course, and which we shall argue are the only variants for which the question is a reasonable one. The two chain complexes will be called $\widehat{CS}(\Sigma,F)$ and $CS^\infty(\Sigma,F)$, and their homologies $\widehat{HS}(\Sigma,F)$ and $HS^\infty(\Sigma,F)$.

The answer appears to be that (i) homology is zero unless $F$ is \emph{alternating}, i.e. the points alternate in sign around $\partial \Sigma$; (ii) any element of homology can be represented by string diagrams which are \emph{sets of sutures}; and (iii) the ``only'' relation between sets of sutures in this homology is the \emph{bypass relation} introduced by Honda--Kazez--Mati\'{c} \cite{HKM08} and developed by the first author \cite{Me09Paper, Me10_Sutured_TQFT, Me11_torsion_tori, Me12_itsy_bitsy}, shown in figure \ref{fig:bypass_relation}.

\begin{figure}[h]
\begin{center}

\begin{tikzpicture}[
scale=1.2, 
string/.style={thick, draw=red, postaction={nomorepostaction, decorate, decoration={markings, mark=at position 0.5 with {\arrow{>}}}}}]

\draw (3,0) circle (1 cm); 	
\draw (0,0) circle (1 cm);
\draw (-3,0) circle (1 cm);

\draw [string] (30:1) arc (120:240:0.57735);
\draw [string] (0,-1) -- (0,1);
\draw [string] (150:1) arc (60:-60:0.57735);

\draw [string] (3,0) ++ (150:1) arc (-120:0:0.57735);
\draw [string] (3,0) ++ (30:1) -- ($ (3,0) + (210:1) $);
\draw [string] (3,0) ++ (-90:1) arc (180:60:0.57735);

\draw [string] (-3,0) ++ (30:1) arc (-60:-180:0.57735);
\draw [string] ($ (-3,0) + (150:1) $) -- ($ (-3,0) +  (-30:1) $);
\draw [string] (-3,-1) arc (0:120:0.57735);

\draw (-1.5,0) node {$+$};
\draw (1.5,0) node {$+$};
\draw (4.5,0) node {$=0$};

\end{tikzpicture}
\caption{Bypass relation.}
\label{fig:bypass_relation}
\end{center}
\end{figure}
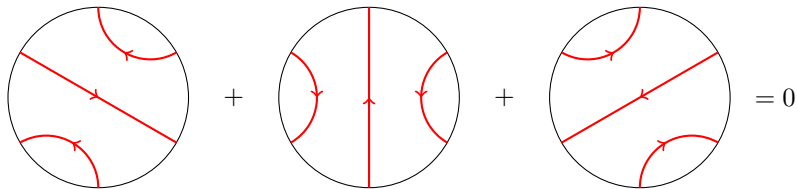

The simplest illustration of the ``reason'' for this relation (far from a proof, of course) is figure \ref{fig:fundamental_boundary}.

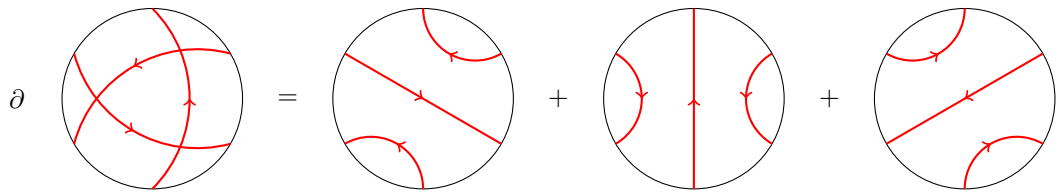
\begin{figure}[ht]
\begin{center}
\begin{tikzpicture}[
scale=1.2, 
string/.style={thick, draw=red, postaction={nomorepostaction, decorate, decoration={markings, mark=at position 0.5 with {\arrow{>}}}}}]

\draw (-6,0) circle (1 cm);
\draw (3,0) circle (1 cm); 	
\draw (0,0) circle (1 cm);
\draw (-3,0) circle (1 cm);

\draw [string] ($ (-6,0) + (-90:1) $) to [bend right=45] ($ (-6,0) + (90:1) $);
\draw [string] ($ (-6,0) + (30:1) $) to [bend right=45] ($ (-6,0) + (210:1) $);
\draw [string] ($ (-6,0) + (150:1) $) to [bend right=45] ($ (-6,0) + (-30:1) $);

\draw [string] (30:1) arc (120:240:0.57735);
\draw [string] (0,-1) -- (0,1);
\draw [string] (150:1) arc (60:-60:0.57735);

\draw [string] (3,0) ++ (150:1) arc (-120:0:0.57735);
\draw [string] (3,0) ++ (30:1) -- ($ (3,0) + (210:1) $);
\draw [string] (3,0) ++ (-90:1) arc (180:60:0.57735);

\draw [string] (-3,0) ++ (30:1) arc (-60:-180:0.57735);
\draw [string] ($ (-3,0) + (150:1) $) -- ($ (-3,0) +  (-30:1) $);
\draw [string] (-3,-1) arc (0:120:0.57735);

\draw (-7.5,0) node {$\partial$};
\draw (-4.5,0) node {$=$};
\draw (-1.5,0) node {$+$};
\draw (1.5,0) node {$+$};

\end{tikzpicture}
\caption{The bypass relation is a boundary.}
\label{fig:fundamental_boundary}
\end{center}
\end{figure}

In this paper we will prove the above results when $\Sigma$ is a disc $D^2$; and some partial results for $\Sigma$ a general surface.
\begin{thm}
\label{thm:zero_thm}
\label{thm:non_alternating_homology}
If $F$ does not alternate in sign then $\widehat{HS}(\Sigma,F) = HS^\infty(\Sigma,F) = 0$.
\end{thm}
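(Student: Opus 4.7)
The plan is to exhibit an explicit chain-level null-homotopy: a linear map $h$ (raising the natural grading by one crossing) satisfying $\partial h + h \partial = \operatorname{id}$. Over $\Z_2$ this immediately forces $\widehat{HS}(\Sigma,F) = HS^\infty(\Sigma,F) = 0$.

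Since $F$ fails to alternate in sign, there must exist two adjacent points $p, q \in F$ of the same sign, separated along $\partial \Sigma$ by a short arc $\beta$ containing no other point of $F$. Because the arcs of a string diagram respect orientation at the boundary (arcs run from positive to negative points of $F$), in any string diagram $s$ with $\partial s = F$ the endpoint at $p$ and the endpoint at $q$ lie on \emph{distinct} arcs $\alpha_p \neq \alpha_q$ of $s$. Define $h(s)$ by a local modification inside a small disc neighborhood $U$ of $\beta$, chosen small enough that $U \cap s$ consists only of the terminal segments of $\alpha_p$ and $\alpha_q$: swap the boundary endpoints of $\alpha_p$ and $\alpha_q$ by braiding the two terminal strands once across each other in $U$. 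Since $p$ and $q$ carry the same sign, all orientations remain compatible, and $h(s)$ is a valid string diagram with precisely one additional crossing, $x \in U$.

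To verify $\partial h + h \partial = \operatorname{id}$, I compute $\partial h(s)$ crossing-by-crossing. The new crossing $x$ has a unique oriented resolution, which by direct inspection is the ``uncrossing'' of the braided strands; this undoes the swap, returning $s$ itself. Every other crossing $c$ of $h(s)$ lies outside $U$ and is identical to a crossing of $s$, so its oriented resolution commutes with the swap performed inside $U$ and produces $h(s_c)$, where $s_c$ denotes $s$ with $c$ resolved. Summing,
\[
\partial h(s) \;=\; s \;+\; \sum_{c \text{ crossing of } s} h(s_c) \;=\; s + h(\partial s),
\]
which is the desired identity.

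The main obstacle I anticipate is verifying naturality: that $h$ descends to each of the quotient complexes $\widehat{CS}(\Sigma,F)$ and $CS^{\infty}(\Sigma,F)$, i.e., respects whichever notion of homotopy of string diagrams is being used. Concretely, I need to check that any allowed homotopy of $s$ supported in $\Sigma \setminus U$ lifts to an allowed homotopy of $h(s)$, and that the choice of braiding inside $U$ is well-defined up to allowed homotopy. Once this naturality is in place for both the ``hat'' and ``infinity'' equivalences, the chain-level identity $\partial h + h \partial = \operatorname{id}$ passes to the quotient complexes and both homologies vanish.
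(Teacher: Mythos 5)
Your proof is essentially the paper's, verbatim: your $h$ is the paper's switching operation $W$, the chain-homotopy identity $\partial h + h\partial = \mathrm{id}$ is the paper's central computation (figure \ref{fig:chain_homotopy}), and the observation that $p$, $q$ lie on distinct arcs, while correct, is not actually needed. The well-definedness concern you flag is indeed the only remaining point, and the paper resolves it in two sentences: the switching is a local modification of the terminal segments of the strands near $\partial\Sigma$, so it is carried along by any ambient isotopy and commutes with string Reidemeister moves of all three types (and balanced type I moves) performed anywhere in the diagram, and it sends diagrams without contractible loops to diagrams without contractible loops; hence $h$ descends to both $\widehat{CS}(\Sigma,F)$ and $CS^\infty(\Sigma,F)$.
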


\begin{thm}
\label{thm:rough_main_thm1}
If $F$ does alternate in sign, then
\[
\widehat{HS}(D^2 ,F) \cong \frac{ \Z_2 \langle \text{isotopy classes of sutures on $(D^2, F)$} \rangle }{ \text{Bypass relation} }
\]
and $HS^\infty(D^2, F) \cong \Z_2[U, U^{-1}] \otimes \widehat{HS}(D^2, F)$.
\end{thm}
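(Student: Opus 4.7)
The plan is to establish the $\widehat{HS}$ isomorphism by producing a map from the bypass quotient to $\widehat{HS}(D^2,F)$, showing it is surjective via a reduction procedure, and showing it is injective by a dimension count; the $HS^\infty$ statement then follows by tracking an extra $U$-action.

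First I would observe that the bypass relation is a boundary: the ``hexagonal'' string diagram displayed in figure \ref{fig:fundamental_boundary}, which has three crossings grouped around a central triangle, resolves under $\partial$ to precisely the three suture sets of the bypass relation (once one checks that any other resolutions cancel in pairs by the local symmetry of the configuration). Hence the map $\Phi$ sending a suture set $s$ to its homology class $[s]\in\widehat{HS}(D^2,F)$ descends to the bypass quotient. To prove that $\Phi$ is surjective I would show that every cycle in $\widehat{CS}(D^2,F)$ is homologous to a linear combination of sutures. The natural approach is induction on a complexity measure like crossing number plus number of closed components: given a diagram with a crossing, an innermost bigon or monogon on the disc supplies an explicit bounding string diagram that trades the crossing for simpler diagrams, and a closed component on $D^2$ bounds a subdisc and can be removed analogously. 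Iterating reduces every cycle to a $\Z_2$-combination of crossingless diagrams, which on $D^2$ are exactly suture sets.

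The main obstacle is injectivity, for which I would appeal to existing computations. When $|F|=2n$ and $F$ alternates, the bypass quotient on $(D^2,F)$ is known from the first author's sutured TQFT papers to have dimension equal to the Catalan number $C_n$, which matches $\dim SFH(D^2\times S^1, F\times S^1)$. To conclude that $\Phi$ is injective it therefore suffices to construct a chain map from $\widehat{CS}(D^2,F)$ into a complex of known dimension $C_n$ --- most naturally the sutured Floer chain complex associated to $(D^2\times S^1,F\times S^1)$ --- that sends distinct suture generators to linearly independent classes. Verifying that this chain map respects the crossing-resolution differential (i.e.\ that the local Heegaard-diagram bigons and triangles realise exactly the local pictures of figure \ref{fig:resolving_crossing}) is where the serious technical work lies, and is essentially the ``dimensional reduction'' alluded to in the paper's title.

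For the $HS^\infty$ statement, the complex $CS^\infty(D^2,F)$ presumably carries an invertible degree-shifting operator $U$ that distinguishes it from $\widehat{CS}$ (encoding the additional ``$\infty$'' data --- for instance marked-point or winding information), and this $U$ acts freely on homology. Once $\Phi$ is upgraded to a $\Z_2[U,U^{-1}]$-equivariant map, and the analogue of the surjectivity/injectivity argument above is run at the $\infty$-chain level, the isomorphism $HS^\infty(D^2,F)\cong \Z_2[U,U^{-1}]\otimes \widehat{HS}(D^2,F)$ becomes formal from the $\widehat{HS}$ calculation and the freeness of the $U$-action, in direct parallel with the relation between $\widehat{SFH}$ and $SFH^\infty$ on the Floer side.
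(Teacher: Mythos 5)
Your proposal departs significantly from the paper's actual argument, and it contains one concrete error and several unjustified steps that should be flagged.

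The concrete error: you assert that the bypass quotient on $(D^2,F)$ has dimension equal to the Catalan number $C_n$, and that this ``matches'' $\dim SFH(D^2\times S^1,F\times S^1)$. Both claims are wrong. The space $\widehat{CS}_{sut}(D^2,F)$ (chord diagrams before quotienting) has dimension $C_n$; after imposing the bypass relation the dimension drops to $2^{n-1}$, and $\dim SFH(D^2\times S^1,F\times S^1)=2^{n-1}$ as well, which is exactly what makes the dimension count close. If you carry $C_n$ through your argument the count will not close.

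Your surjectivity step is also a genuine gap, not just a stylistic difference. You propose an induction on a complexity measure like crossing number, invoking ``innermost bigons or monogons'' to trade a crossing for simpler diagrams. But a cycle $x\in\widehat{CS}(D^2,F)$ is a $\Z_2$-sum of diagrams, and the condition $\partial x=0$ already means the top-filtration resolutions cancel among themselves; nothing in your sketch produces a chain $u$ with $x+\partial u$ concentrated in lower filtration, and the local bigon/monogon picture does not obviously yield one, because the relevant crossings of $x$ need not assemble into local bigons at all. The paper's actual surjectivity mechanism is completely different: it is an induction on $|F|$, not on crossing number, driven by the ``crossed wires'' lemma, which shows that every cycle on $(D^2,F_{n+1})$ decomposes as $a_-^*y+a_+^*z+\partial u$ with $y,z$ cycles on $(D^2,F_n)$. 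You would need to supply something equivalent.

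Your injectivity step defers the entire content to constructing a chain map from $\widehat{CS}(D^2,F)$ into the sutured Floer chain complex that realises the crossing-resolution differential via holomorphic disks. That is precisely what is \emph{not} done here (and is arguably harder than the whole theorem); the paper instead proves linear independence of the generators $v_w=a^*_w v_\emptyset$ purely combinatorially, by applying a word-specific sequence of annihilation operators that sends $v_w\mapsto v_\emptyset$ and all other $v_{w'}\mapsto 0$. Finally, while your remarks on $HS^\infty$ are in the right spirit, they skip the one place where real work happens: the base case $HS^\infty(D^2,F_1)$, which requires classifying spin-homotopy classes by the pair (number of closed curves, Euler class) and checking that the differential kills exactly half the generators, leaving one generator per multiple-of-four Euler class; freeness of the $U$-action is a conclusion of that computation, not an input.
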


As the notation here suggests, the ``$\infty$-complex'' $CS^\infty$ has a ``$U$-map'' and indeed the notation is by analogy with Floer homology; see sections \ref{sec:string_complex} and \ref{sec:U_map} below. We will discuss the various structures preserved by these isomorphisms as we proceed.

\subsection{Relations between Floer-theoretic invariants}
\label{sec:floer-theoretic}

The above combinatorial question about curves on surfaces is in fact motivated by relations between several invariants of $3$-manifolds. In particular, it sheds light on the correspondence between $SFH$, sutured Floer homology, and $ECH$, embedded contact homology, for $3$-manifolds with sutured boundary of the form $(\Sigma \times S^1, F \times S^1)$, where $\Sigma$ is a surface with boundary and $F$ is a finite subset of $\partial \Sigma$. Because such a $3$-manifold is then ``reduced'' to a product of a surface and a circle, this can be considered as a \emph{dimensionally-reduced} version of $SFH$.

It has long been believed that several Floer-theoretic invariants associated to a closed oriented 3-manifold $M$ are equivalent: Heegaard Floer homology $HF$ (as defined by Ozsvath--Szabo beginning in \cite{OS04Closed, OS04Prop}), embedded contact homology $ECH$ (as defined by Hutchings in \cite{Hutchings02}), and Seiberg-Witten Floer homology $HM$ (as defined by Kronheimer--Mrowka in \cite{Kronheimer_Mrowka07}). All of these invariants come in various flavours, for instance $\widehat{HF}$, $HF^+$, $HF^-$, $HF^\infty$. Recent work of Kutluhan--Lee--Taubes, in a series of $5$ papers running to well over $750$ pages \cite{KLT10_HF_HM_1, KLT10_HF_HM_2, KLT10_HF_HM_3, KLT11_HF_HM_4, KLT12_HF_HM_5}, asserts a proof of these equivalences, in their various flavours, for closed connected oriented 3-manifolds. Independent work of Colin--Ghiggini--Honda \cite{CGH10}, also running into hundreds of pages (announced in \cite{CGH10}, summarised in \cite{CGH_HF_ECH_summary}, detail in \cite{CGH_ECH_open_books, CGH_HF_ECH_1, CGH_HF_ECH_2, CGH_HF_ECH_3}), asserts a proof of similar equivalences between $HF$ and $ECH$, avoiding Seiberg-Witten theory and using open books.

These correspondences are expected to apply, with appropriate modifications, also to 3-manifolds with boundary. Although there is a theory of bordered Heegaard Floer homology for general 3-manifolds with boundary \cite{LOT08}, the more restricted class of \emph{sutured} 3-manifolds plays a natural role in both Heegaard Floer homology and embedded contact homology. Sutured Floer homology $SFH$, as defined by Juh\'{a}sz \cite{Ju06}, is a generalisation of $\widehat{HF}$ to sutured 3-manifolds. Analogously, Colin--Ghiggini--Honda--Hutchings \cite{CGHH} have given a definition of $ECH$ for sutured 3-manifolds.

This paper explores the $HF$--$ECH$ correspondence, in a combinatorial form, in the particular case of sutured manifolds of the product form $(\Sigma \times S^1, F \times S^1)$, where $\Sigma$ is a compact oriented surface with nonempty boundary. Previous work of the authors and others has considered these two (or, in the case of $ECH$, only similar) homology theories in a combinatorial form, and in this paper we show these two combinatorial forms are related.

As regards $SFH$, in a series of papers \cite{Me09Paper, Me10_Sutured_TQFT, Me11_torsion_tori, Me12_itsy_bitsy}, the first author gave several combinatorial descriptions of various $SFH(\Sigma \times S^1, F \times S^1)$ in terms of a diagrammatic calculus of \emph{chord diagrams} or more generally \emph{sutures}. 

On the $ECH$ side, recall that  $ECH$ of a $3$-manifold is constructed by choosing a contact structure and counting certain holomorphic curves in the symplectization.  When that contact $3$-manifold is closed, there is another holomorphic curve theory, called Symplectic Field Theory, due to Eliashberg, Givental and Hofer, developed in \cite{EGH}.  Consider the special case when the $3$-manifold is the unit cotangent bundle of a hyperbolic surface $UT^* \Sigma$.  In \cite{CL}, Cieliebak and Latschev proved that the only relevant nontrivial holomorphic curves in this context correspond to \emph{resolving intersections between geodesics} on $\Sigma$, as in figure \ref{fig:resolving_crossing}.  In \cite{Goldman86, Turaev91}, Goldman and Turaev discovered the structures of a Lie bracket and cobracket on the space of geodesics by considering the same resolutions of intersection points.  All of the $SFT$ invariants in this case can then be described in terms of these combinatorial operations on the base $\Sigma$.  Indeed this is expected to be just a special case of a general relation between the $SFT$ invariants of $T^* M$ for any closed manifold $M$, and the string topology of $M$, as discovered by Chas and Sullivan in \cite{CS}.

Relating back to $ECH$, the holomorphic curves considered in this special case include those counted in the $ECH$ of the unit cotangent bundle $UT^* \Sigma$, given a very particular choice of contact and complex structure.  Thus one might expect the $ECH$ to be expressible in terms of these same combinatorial operations on geodesics.

Based on the above, we might make the following plausibility argument. When $\Sigma$ is a compact oriented surface with boundary, $UT^* \Sigma \cong \Sigma \times S^1$, and taking ``vertical'' sutures $\Gamma = F \times S^1 \subset \partial \Sigma \times S^1$ (where $F \subset \partial \Sigma$ is a finite set of signed points) gives a reasonable boundary structure for Reeb chords and corresponding holomorphic curves, which following \cite{CGHH} would require Reeb chords to flow in or out of $\Sigma$ according to the signed components of $\partial \Sigma \backslash F$. The $ECH$ complex might then, from the discussion above, be generated by homotopy classes of collections of curves in $\Sigma$ (i.e. \emph{string diagrams}), with a differential related to the Goldman bracket. On the other hand, according to the first author's work, the $SFH$ complex should be generated by isotopy classes of sutures on $(\Sigma, F)$, modulo a bypass relation. Thus there should be an isomorphism between the homology of a complex generated by curves on $\Sigma$ with a differential determined by resolving crossings, and a vector space of sutures on $(\Sigma, F)$ modulo a bypass relation, which is also $SFH(\Sigma \times S^1, F \times S^1)$. 

The theorems of this paper confirm some of these ideas, and we have the following result for discs.
\begin{thm}
\label{thm:rough_main_thm2}
Let $(D^2, F)$ be a sutured background disc. Then, with $\Z_2$ coefficients,
\[
\widehat{HS}(D^2, F) \cong SFH(D^2 \times S^1, F \times S^1).
\]
\end{thm}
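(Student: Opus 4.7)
The plan is to deduce this theorem as an essentially immediate corollary of Theorem \ref{thm:rough_main_thm1}, combined with the first author's prior combinatorial description of $SFH(D^2 \times S^1, F \times S^1)$ in terms of sutures modulo bypass relations. The strategy is to identify both sides with the same explicit quotient vector space, namely $\Z_2\langle\text{isotopy classes of sutures on }(D^2,F)\rangle$ modulo the bypass relation of figure \ref{fig:bypass_relation}, and then chain the two isomorphisms.

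First I would dispatch the non-alternating case. If $F$ does not alternate in sign around $\partial D^2$, then $\widehat{HS}(D^2, F) = 0$ by Theorem \ref{thm:non_alternating_homology}. On the other side, $SFH(D^2 \times S^1, F \times S^1) = 0$ as well, since the first author's earlier work (\cite{Me09Paper, Me10_Sutured_TQFT}) shows that the sutured Floer homology of such product manifolds vanishes unless the sutures on $\partial D^2 \times S^1$ have the alternating form that corresponds precisely to $F$ alternating in sign.

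Next I would treat the alternating case. By Theorem \ref{thm:rough_main_thm1}, there is a canonical isomorphism
\[
\widehat{HS}(D^2, F) \;\cong\; \frac{\Z_2\langle\text{isotopy classes of sutures on }(D^2,F)\rangle}{\text{Bypass relation}}.
\]
On the other hand, the first author's work \cite{Me09Paper, Me10_Sutured_TQFT} establishes that, for alternating $F$ consisting of $2n$ points on $\partial D^2$, $SFH(D^2 \times S^1, F \times S^1)$ is a $\Z_2$-vector space of rank equal to the Catalan number $C_n$, generated by chord diagrams on $(D^2, F)$, where two chord diagrams represent the same class precisely when they are related by a sequence of bypass moves. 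Unpacking this identification, the right-hand side there is the same quotient appearing in Theorem \ref{thm:rough_main_thm1}, and composing the two isomorphisms yields the desired result.

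The main obstacle, and really the only substantive thing to verify beyond quoting the two results above, is to check that the identifications on the two sides are compatible in the sense that both are induced by the tautological map sending a suture on $(D^2, F)$ to itself: on the string homology side as a generator of $\widehat{CS}(D^2, F)$, and on the $SFH$ side as a generator corresponding to the contact element of the tight contact structure on $D^2 \times S^1$ with dividing set the suture $\times \{pt\}$. Once it is confirmed that the bypass relation invoked in Theorem \ref{thm:rough_main_thm1} is literally the same relation used in \cite{Me09Paper, Me10_Sutured_TQFT} (which is essentially a bookkeeping check, since the figures agree), the theorem follows with no further work.
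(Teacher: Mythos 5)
Your proposal coincides exactly with the paper's own proof: the paper states that Theorem~\ref{thm:rough_main_thm2} ``follows immediately from Theorem~\ref{thm:rough_main_thm1}'' together with the first author's prior identification (from \cite{Me09Paper}) of $SFH(D^2 \times S^1, F \times S^1)$ with $\widehat{CS}_{sut}(D^2,F)/\widehat{\Byp}(D^2,F)$, precisely the chaining of isomorphisms you describe. Two minor slips worth noting: the rank of $SFH(D^2\times S^1, F\times S^1)$ for $|F|=2n$ is $2^{n-1}$, not the Catalan number $C_n$ (the $C_n$ chord diagrams span but are not linearly independent once the bypass relation is imposed), and since the hypothesis already requires $(D^2,F)$ to be a \emph{sutured background}, $F$ alternates by definition, so your non-alternating discussion is superfluous (indeed $SFH(D^2\times S^1, F\times S^1)$ is not even defined for non-alternating $F$, as $F\times S^1$ then fails to be a valid set of sutures).
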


The first author has shown that $SFH(\Sigma \times S^1, F \times S^1)$ is isomorphic to the $\Z_2$-vector space generated by isotopy classes of \emph{sets of sutures} on $(\Sigma, F)$, modulo the bypass relation. (This was shown in \cite{Me09Paper} for $\Sigma=D^2$ and for general surfaces follows immediately combining results there with a theorem of Juh\'{a}sz \cite{Ju08}. It also follows immediately from results of \cite{Me12_itsy_bitsy}.) From these results, theorem \ref{thm:rough_main_thm2} follows immediately from theorem \ref{thm:rough_main_thm1}.

The chain complexes $\widehat{CS}(\Sigma,F)$ and $CS^\infty(\Sigma,F)$ considered here possess several natural gradings, and our isomorphisms preserve some of them. Sets of sutures have an \emph{Euler class}, and under our isomorphisms, this grading corresponds to the grading by \emph{spin-c structure} in sutured Floer and Heegaard Floer homology. String diagrams are also naturally filtered by (minimal) \emph{number of intersections} between curves in a diagram; the differential decreases number of intersections. Since, as described above, all homology classes in string homology are represented by sutures, which have no intersections, all homology is carried in intersection-filtration level $0$.

The $\Z_2$-vector space $SFH(\Sigma \times S^1, F \times S^1)$ can also be described as a tensor power of a fundamental two-dimensional vector space, which in \cite{Me12_itsy_bitsy} was given basis $\{\0, \1\}$ following an analogy with quantum information theory. (Note $\0 \neq 0$!) It follows immediately from \cite{Me12_itsy_bitsy} that $SFH(\Sigma \times S^1, F \times S^1)$ is isomorphic to the $(n - \chi(\Sigma))$-th tensor power of this fundamental vector space $\Z_2 \0 \oplus \Z_2 \1$ (where $|F| = 2n$); that paper also describes in detail how to interpret the ``qubit'' elements of this vector space as sets of sutures. The isomorphism, without this interpretation, was shown earlier in \cite{HKM08}. Thus the above theorem also amounts to showing
\[
\widehat{HS}(D^2,F) \cong \left( \Z_2 \0 \oplus \Z_2 \1 \right)^{\otimes (n-1) }.
\]

Our notion of string homology applies more broadly than to surfaces with sutures. When we speak of a surface $\Sigma$ with signed points $F$ on the boundary, the points of $F$ only make sense for sutures if the signs of point of $F$ \emph{alternate} around each boundary component. We allow more general sets $F$, which we call \emph{markings}, as long as each boundary component has at least one point, and there are the same number of points of each sign. In this case string homology is well-defined, although sutures are not. Theorem \ref{thm:zero_thm} shows that both $\widehat{HS}$ and $HS^\infty$ are trivial in this case; and this is true not just for discs but for general $\Sigma$.

Although embedded contact homology plays a strong role in the motivations of this paper, none of the theorems directly assert an isomoprhism with $ECH$. This is partly because of the well-known difficulties in considering holomorphic curves near the boundary of the symplecitzation of a contact 3-manifold with boundary. It is also partly because the situation of $(\Sigma \times S^1, F \times S^1)$ is closer to the situation of \cite{Golovko09} than the situation of cotangent bundles considered by Cieliebak and Latschev in \cite{CL} and by the second author in \cite{Schoenfeld_Thesis}. Our string complex appears to require \emph{Reeb chords} as well as closed Reeb orbits in $ECH$ considerations. This is a matter for further investigation.

This paper is organised as follows. In section \ref{sec:string_complex} we define our basic concepts, including markings, sutures, string diagrams, and various forms of homotopy, importantly including spin homotopy. In section \ref{sec:the_string_complex} we define the string complexes $CS^\infty$ and $\widehat{CS}$. We consider their gradings, discuss bypasses and the $U$ map, and are then able to state our main theorems precisely. In section \ref{sec:properties_of_string_complexes} we prove various properties of the string complexes, prove that they are well-defined, and argue why our choices of definitions for $CS^\infty$ and $\widehat{CS}$ are apropriate. In section \ref{sec:non-alternating} we consider non-alternating markings and show that in this case homology is zero. In section \ref{sec:generalised_euler_class} we extend the notion of Euler class to general string diagrams on discs. In section \ref{sec:creation_annihilation} we define operators on the string complexes which are crucial for the proof. Then in section \ref{sec:homology_computation} we prove the theorem for $\widehat{HS}$, and in section \ref{sec:discs_with_spin} for $HS^\infty$.

\section{String diagrams}
\label{sec:string_complex}

\subsection{Markings, sutures and string diagrams}

Throughout, let $\Sigma$ be a compact oriented surface with nonempty boundary. 
\begin{defn}
A \emph{marking} $F$ on $\Sigma$ is a set of $2n$ points on $\partial \Sigma$, where $n \geq 1$, with $n$ points labelled ``in'' and the other $n$ points labelled ``out'', and at least one point on each component of $\partial \Sigma$. The pair $(\Sigma, F)$ is called a \emph{marked surface}. Write $F_{in}$ and $F_{out}$ for the corresponding points of $F$.
\end{defn}
Note different boundary components may have different (but always nonzero) numbers of points of $F$. Also, a boundary component may have a different number of ``in'' and ``out'' points.

\begin{defn}
A marking $F$ on $\Sigma$ is \emph{alternating} if, in order around each component of $\partial \Sigma$, the points of $F$ are labelled (in, out, \ldots, in, out).
\end{defn}
An alternating marked surface $(\Sigma,F)$ has the structure of a \emph{sutured background}, as in \cite{Me10_Sutured_TQFT, Me12_itsy_bitsy}. (Compare the terminology of \cite{Zarev09}.) As defined in \cite{Me12_itsy_bitsy}, this means that we can write $\partial \Sigma \backslash F = C_+ \sqcup C_-$, where $C_\pm$ is a collection of oriented arcs, $C_\pm$ is oriented as $\pm \partial \Sigma$, and $\partial C_\pm = -F$ as signed points; so the arcs of $C_+$ and $C_-$ alternate around $\partial \Sigma$. We will use both ``alternating marking'' and ``sutured background'' in this paper; the terms are synonymous.

An alternating $(\Sigma, F)$ is the boundary structure for a \emph{set of sutures} on $\Sigma$. Roughly speaking, a set of sutures on $(\Sigma, F)$ is a properly embedded set of curves $\Gamma$ with oriented boundary $F$ and cutting $\Sigma$ coherently into positive and negative regions $R_\pm$. Our definition follows \cite{Me12_itsy_bitsy}.
\begin{defn}
A \emph{set of sutures} $\Gamma$ on $(\Sigma, F)$ is a properly embedded oriented 1-submanifold of $\Sigma$ with $\partial \Gamma = F$, such that:
\begin{enumerate}
\item
$\Sigma \backslash \Gamma = R_+ \cup R_-$, where $R_\pm$ are surfaces oriented as $\pm \Sigma$;
\item
$\overline{\partial R_\pm \backslash \partial \Sigma} = \Gamma$ as oriented 1-manifolds; and
\item
for every component $C$ of $\partial \Sigma$, $C \cap \Gamma \neq \emptyset$.
\end{enumerate}
The pair $(\Sigma, \Gamma)$ is called a \emph{sutured surface}. 
\end{defn}
In particular, as we cross $\Gamma$ we proceed from $R_+$ to $R_-$ or vice versa. A component of $\Gamma$ is called a \emph{suture}. At each point of $\partial \Gamma = F$, precisely one suture either enters or exits $\Sigma$, according to the labelling on $F$. The arcs $C_\pm$ of the sutured background lie in the boundary of $R_\pm$; specifically, $\partial R_\pm = C_\pm \cup \Gamma$.

Thus, given a sutured background $(\Sigma, F)$ we may consider sets of sutures $\Gamma$ on $\Sigma$ such that $\partial \Gamma = F$; such $\Gamma$ ``fills in'' $(\Sigma, F)$.

As a generalisation of sutures, allowing curves to intersect and allowing a non-alternating marked surface as boundary data, we make the following definition.
\begin{defn}
A \emph{string diagram} $s$ on the marked surface $(\Sigma, F)$ is an immersed oriented 1-manifold in $\Sigma$, such that $\partial s = F$ (as signed points).
\end{defn}
That is, arcs of a string diagram run from $F_{in}$ to $F_{out}$. Generically a string diagram contains only transverse double intersections; this is general position. When a string diagram has no crossings and the complementary regions may be coherently oriented, it forms a set of sutures.

When needed, the string diagram $s$ can be given as an explicit immersion $s: \left( \sqcup_{i=1}^l S^1 \right) \sqcup \left( \sqcup_{i=1}^m [0,1] \right) \rightarrow (\Sigma, F)$, where there are $l$ arcs and $m$ closed curves in $s$. In practice we often abuse notation and identify this immersion with its image in $\Sigma$.

\subsection{Homotopy of string diagrams}

Several types of homotopy are useful for string diagrams.

Firstly, two string diagrams $s_0, s_1$ are \emph{homotopic} if there is a homotopy relative to endpoints from $s_0$ to $s_1$. Such a homotopy may introduce or remove intersections in the diagram, including self-intersections. We do not require that the homotopy be through immersions; thus the two string diagrams $s_0, s_1$ shown in figure \ref{fig:type_I_Reidemeister} are homotopic. In a homotopy from $s_t$ from $s_0$ to $s_1$ which changes the writhe of the string, a singularity will occur for some $s_t$.

\begin{figure}[ht]
\begin{center}
\begin{tikzpicture}[scale=1, 
string/.style={thick, draw=red, postaction={nomorepostaction, decorate, decoration={markings, mark=at position 0.5 with {\arrow{>}}}}}]

\draw [string] (0,-1) -- (0,1);
\draw [string] (2,-1) .. controls (2,0) and (2.5,.5) .. (2.5,0) .. controls (2.5,-0.5) and (2,0) .. (2,1);

\draw (1,0) node {$\longleftrightarrow$};
\draw (0,-1.5) node {$s_0$};
\draw (2,-1.5) node {$s_1$};

\end{tikzpicture}
\caption{Type I string Reidemeister move}
\label{fig:type_I_Reidemeister}
\end{center}
\end{figure}
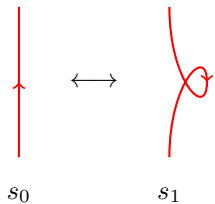

Secondly, two string diagrams $s_0, s_1$ are \emph{regular homotopic} if they are homotopic relative to endpoints \emph{through immersions}. The two string diagrams of figure \ref{fig:type_I_Reidemeister} are \emph{not} regular homotopic. A theorem of Whitney states that closed curves in the plane are regular homotopic if and only if they are homotopic and have the same winding number \cite{Whitney}.

A homotopy $s_t$ between two string diagrams $s_0, s_1$ is an \emph{ambient isotopy} if it arises from an isotopy of diffeomorphisms $F: \Sigma \times [0,1] \rightarrow \Sigma$ which hold $\partial \Sigma$ constant (i.e. each $F_t: \Sigma \times \{t\} \rightarrow \Sigma$ is a diffeomorphism, $F_0$ is the identity, and $s_t = F_t \circ s_0$, and for any $x \in \partial \Sigma$, $F_t(x)=x$). In an ambient isotopy of string diagrams, no crossings are altered; the strings move around the surface together. An ambient isotopy induces an isotopy rel boundary of the images of the immersions $s_0, s_1$, regarded as graphs on $\Sigma$.

Obviously every ambient isotopy of string diagrams is a regular homotopy, and every regular homotopy of string diagrams is a homotopy.

Any string diagram is homotopic to one in \emph{general position}, i.e. which has only transverse double intersection points. Just as for knot projections, two homotopic string diagrams in general position are related by a sequence of ambient isotopies and \emph{string Reidemeister moves}, as shown in figures \ref{fig:type_II_Reidemeister} and \ref{fig:type_III_Reidemeister}. Note that as string diagrams are oriented, there are two versions of the type II and III moves.

\begin{figure}[ht]
\begin{center}
\begin{tikzpicture}[scale=1, 
string/.style={thick, draw=red, postaction={nomorepostaction, decorate, decoration={markings, mark=at position 0.5 with {\arrow{>}}}}}]

\draw [string] (0,-1) -- (0,1);
\draw [string] (0.5,-1) -- (0.5,1);
\draw [string] (2.5,-1) .. controls (2.5,-0.5) and (3,-0.5) .. (3,0) .. controls (3,0.5) and (2.5,0.5) .. (2.5,1);
\draw [string] (3,-1) .. controls (3,-0.5) and (2.5,-0.5) .. (2.5,0) .. controls (2.5,0.5) and (3,0.5) .. (3,1);
\draw [string] (5,-1) -- (5,1);
\draw [string] (5.5,1) -- (5.5,-1);
\draw [string] (7.5,-1) .. controls (7.5,-0.5) and (8,-0.5) .. (8,0) .. controls (8,0.5) and (7.5,0.5) .. (7.5,1);
\draw [string] (8,1) .. controls (8,0.5) and (7.5,0.5) .. (7.5,0) .. controls (7.5,-0.5) and (8,-0.5) .. (8,-1);
\draw (1.5,0) node {$\longleftrightarrow$};
\draw (6.5,0) node {$\longleftrightarrow$};

\end{tikzpicture}
\caption{Type II string Reidemeister moves.}
\label{fig:type_II_Reidemeister}
\end{center}
\end{figure}
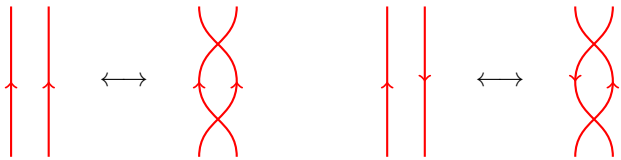

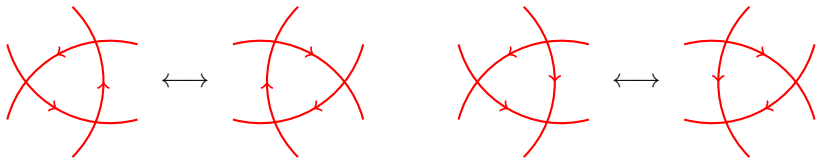
\begin{figure}[ht]
\begin{center}
\begin{tikzpicture}[scale=1, 
string/.style={thick, draw=red, postaction={nomorepostaction, decorate, decoration={markings, mark=at position 0.5 with {\arrow{>}}}}}]

\draw [string] ($ (-6,0) + (-90:1) $) to [bend right=45] ($ (-6,0) + (90:1) $);
\draw [string] ($ (-6,0) + (30:1) $) to [bend right=45] ($ (-6,0) + (210:1) $);
\draw [string] ($ (-6,0) + (150:1) $) to [bend right=45] ($ (-6,0) + (-30:1) $);

\draw (-4.5,0) node {$\longleftrightarrow$};

\draw [string] ($ (-3,0) + (-90:1) $) to [bend left=45] ($ (-3,0) + (90:1) $);
\draw [string] ($ (-3,0) + (30:1) $) to [bend left=45] ($ (-3,0) + (210:1) $);
\draw [string] ($ (-3,0) + (150:1) $) to [bend left=45] ($ (-3,0) + (-30:1) $);

\draw [string] ($ (0,0) + (90:1) $) to [bend left=45] ($ (0,0) + (-90:1) $);
\draw [string] ($ (0,0) + (30:1) $) to [bend right=45] ($ (0,0) + (210:1) $);
\draw [string] ($ (0,0) + (150:1) $) to [bend right=45] ($ (0,0) + (-30:1) $);

\draw (1.5,0) node {$\longleftrightarrow$};

\draw [string] ($ (3,0) + (90:1) $) to [bend right=45] ($ (3,0) + (-90:1) $);
\draw [string] ($ (3,0) + (30:1) $) to [bend left=45] ($ (3,0) + (210:1) $);
\draw [string] ($ (3,0) + (150:1) $) to [bend left=45] ($ (3,0) + (-30:1) $);

\end{tikzpicture}
\caption{Type III string Reidemeister moves.}
\label{fig:type_III_Reidemeister}
\end{center}
\end{figure}

Similarly, any string diagram is regular homotopic to one in general position, and two regular homotopic string diagrams are related by a sequence of ambient isotopies and string Reidemeister moves of type II and III (not type I, which changes winding number and regular homotopy class).

For example, when $\Sigma = D^2$, any string diagram without closed curves is homotopic to one consisting of straight line chords between points of $F$. Any string diagram without closed curves is \emph{regular} homotopic to one consisting of chords, each with a fixed number of ``whirls'' giving the correct winding number. The ambient isotopy classes of string diagrams on $D^2$ are much more complicated, since in general string diagrams can have curves intersecting obnoxiously.

\subsection{Spin homotopy}
\label{sec:spin_homotopy}

For our purposes it will be useful to define a notion of ``spin homotopy'', which is closely related to regular homotopy. Roughly, a spin homotopy of string diagrams is a regular homotopy, but it also allows type I Reidemeister moves, putting ``whirls'' in the strings, and altering the winding number of a string, as long as the total change in winding number is zero.

First, consider the operation, a type I Reidemeister move, of taking an embedded arc which forms part of a string diagram, and putting a ``whirl'' there (see figure \ref{fig:type_I_Reidemeister}). Since the strings of a string diagram are oriented, that whirl may run clockwise or anticlockwise, changing the winding number of the string by $-1$ or $+1$ respectively. We call that type I Reidemeister move \emph{negative} or \emph{positive} accordingly.

\begin{defn}
A \emph{balanced} type I Reidemeister move on a string diagram $s$ consists of taking two disjoint embedded arcs $a_-, a_+$, in $s$, performing a negative type I Reidemeister move on $a_-$, and a positive type I Reidemeister move on $a_+$.
\end{defn}
(Note the two arcs $a_-, a_+$ may lie on the same immersed curve of $s$, or not.)

\begin{defn}
A \emph{spin homotopy} of string diagrams is a homotopy which can be expressed as a sequence of the following:
\begin{enumerate}
\item
ambient isotopies;
\item
type II string Reidemeister moves;
\item
type III string Reidemeister moves;
\item
balanced type I Reidemeister moves.
\end{enumerate}
\end{defn}

Thus, a regular homotopy, which never uses type I Reidemeister moves, is a spin homotopy. Similarly, a spin homotopy is a homotopy.
\[
\text{ambient isotopy} \subset \text{regular homotopy} \subset \text{spin homotopy} \subset \text{homotopy}
\]

Given two homotopic string diagrams $s_0, s_1$, there exists a unique integer $n$ such that introducing $n$ whirls into $s_0$ (at any possible locations) to obtain a string diagram $s'_0$, the string diagrams $s'_0, s_1$ are spin homotopic. We call this integer $n$ the \emph{relative winding of $s_1$ with respect to $s_0$}. Spin homotopic string diagrams have relative winding of $0$.

When $\Sigma$ is a disc, we can in fact replace the notion of ``relative'' with ``absolute'' winding. In section \ref{sec:generalised_euler_class} we will assign an integer $e(s)$ to a general string diagram $s$ on a disc, such that $e(s_1) - e(s_0)$ is the relative winding of $s_1$ with respect to $s_0$. In the case that $s$ is a set of sutures $\Gamma$, we will show that this $e(\Gamma)$ is the \emph{Euler class} of $s$, which is defined as $e(\Gamma) = \chi(R_+) - \chi(R_-)$. We will also call $e(s)$ the Euler class of the string diagram. So, on $D^2$, a homotopy class of string diagrams splits into a countable infinity of spin homotopy classes, which are indexed precisely by the Euler class.

\section{The string complex}
\label{sec:the_string_complex}

\subsection{Definition of the complex}

\begin{defn}
Given a marked surface $(\Sigma, F)$, we define the following vector spaces over $\Z_2$:
\begin{enumerate}
\item
$CS^\infty(\Sigma,F)$ is freely generated by spin homotopy classes of string diagrams on $(\Sigma,F)$.
\item
$\widehat{CS}(\Sigma,F)$ is freely generated by homotopy classes of string diagrams on $(\Sigma,F)$ which contain no contractible closed curves.
\end{enumerate}
\end{defn}

Since a spin homotopy is a homotopy, there is a natural map $p : CS^\infty(\Sigma,F) \rightarrow \widehat{CS}(\Sigma,F)$ which is the identity on string diagrams without contractible closed curves, and sends string diagrams with contractible closed curves to $0$.

For example, when $\Sigma = D^2$ and $|F| = 2n$, $\dim \widehat{CS}(\Sigma,F) = n!$, and $CS^\infty(\Sigma,F)$ is infinite dimensional.

\subsection{Grading by intersections}

Both $CS^\infty$ and $\widehat{CS}$ are naturally graded according to number of intersections of the curves in a string diagram. A string diagram $s$ in general position has a finite number of crossings. If we consider the homotopy class of $s$ then there is a string diagram in that class which has a minimal number of crossings, which we denote $\widehat{I}(s)$. Similarly, if we consider the spin homotopy class of $s$, there is a minimal number of crossings, which we denote $I^\infty(s)$.

A nonzero element $v \in CS^\infty(\Sigma,F)$ can be written as a finite sum $v = \sum_j s_j$, where $s_j$ is a string diagram up to spin homotopy. We define $I^\infty (v) = \max_j I^\infty (s_j)$, and let $CS^\infty_i (\Sigma, F)$ be the free $\Z_2$-vector space generated by (spin homotopy classes of) diagrams $s$ with $I^\infty(s) = i$. Similarly, an element $v \in \widehat{CS}(\Sigma,F)$ can be written as $\sum_j s_j$ where $s_j$ are string diagrams without contractible loops up to homotopy. We let $\widehat{I}(v) = \max_j \widehat{I}(s_j)$, and let $\widehat{CS}_i (\Sigma, F)$ be spanned by (homotopy classes of) diagrams with $\widehat{I}(s) = i$. We also set $I^\infty(0) = \widehat{I}(0) = -\infty$ for completeness. We then have
\[
CS^\infty (\Sigma, F) = \bigoplus_{i \geq 0} CS^\infty_i (\Sigma, F), \quad
\widehat{CS}(\Sigma, F) = \bigoplus_{i \geq 0} \widehat{CS}_i (\Sigma, F), \quad
\]
and the map $p: CS^\infty (\Sigma,F) \rightarrow \widehat{CS}(\Sigma,F)$ decreases grading: $\widehat{I} (p(v)) \leq I^\infty (v)$.

Among the string diagrams with $0$ crossings are those string diagrams which are sets of sutures, i.e. when the curves cut $(\Sigma, F)$ into coherently oriented regions. A string diagram $s$ may be spin homotopic to a set of sutures; if so, that set of sutures is unique. We write $CS^\infty_{sut} (\Sigma,F)$ for the subspace of $CS^\infty_0(\Sigma,F)$ generated by spin homotopy classes of sets of sutures. Isotopy classes of sutures form a basis for $CS^\infty_{sut}(\Sigma,F)$.

Similarly, a string diagram $s$ may be homotopic to a set of sutures, and if so that set of sutures is unique. We define $\widehat{CS}_{sut} (\Sigma, F)$ for the subspace of $\widehat{CS}_0 (\Sigma, F)$ generated by homotopy classes of sutures without contractible closed curves. The isotopy classes of sutures without contractible loops form a basis for $\widehat{CS}_{sut}(\Sigma,F)$.

\subsection{Grading by Euler class}

As noted above, when $\Sigma = D^2$, we will define an Euler class $e(s)$ of a string diagram $s$, which is constant on spin homotopy classes (but not on homotopy classes in general). This gives another grading on $CS^\infty(D^2,F)$ (but not on $\widehat{CS}(D^2,F)$). We write $CS^\infty_e(D^2,F)$ for the span of (spin homotopy classes of) string diagrams of Euler class $e$, and $CS^\infty_{e,i} (D^2,F)$ for the span of (spin homotopy classes of) string diagrams $s$ with Euler class $e$ and $I^\infty (s) = i$. Then
\[
CS^\infty(D^2,F) = \bigoplus_{e \in \Z} CS^\infty_e (D^2, F) = \bigoplus_{e \in \Z} \bigoplus_{i \geq 0} CS^\infty_{e,i} (D^2,F),
\quad
CS^\infty_i (D^2,F) = \bigoplus_{e \in \Z} CS^\infty_{e,i} (D^2, F)
\]
Restricting to sets of sutures, the Euler class gives a grading on $CS^\infty_{sut}(D^2,F)$. Writing $CS^\infty_{sut,e} (D^2, F)$ for the span of sutures of Euler class $e$ we have $CS^\infty_{sut}(D^2, F) = \oplus_{e \in \Z} CS^\infty_{sut,e} (D^2, F)$.

As we will define it, the Euler class is not well defined on homotopy classes of sutures, hence not on $\widehat{CS}(D^2,F)$. But the Euler class is well-defined on sutures; and if a string diagram is homotopic to a set of sutures, then the set of sutures is unique. So we obtain a grading $\widehat{CS}_{sut}(D^2,F) = \oplus_{e \in \Z} \widehat{CS}_{sut,e} (D^2,F)$.

\subsection{Bypass triples of sutures}

There are distinguished triples of sets of sutures on $(\Sigma,F)$ known as \emph{bypass triples}. In \cite{Hon00I}, Honda introduced a contact-geometric operation known as \emph{bypass addition}, which has the effect of performing an operation on a dividing set on a convex surface. Dividing sets can be regarded as sutures, and the operation on sutures is called \emph{bypass surgery}.

Bypass surgery consists of taking an embedded disc $D$ in a sutured surface $(\Sigma,  \Gamma)$, such that $D \cap \Gamma$ consists of $3$ disjoint parallel arcs, and replacing the sutures by ``60 degree rotation'' as shown in figure \ref{fig:bypass_relation}. This ``rotation'' of sutures can be done in two possible ways, known as \emph{upwards} or \emph{downwards} bypass surgery. Bypass surgery on a string diagram may produce more or less closed sutures, but the result is always a set of sutures. Any two sets of sutures related by bypass surgery determine a third set of sutures related to them both.

Bypass surgery is an order $3$ operation, and sets of sutures related by bypass surgery along the same $D$ come in triples, called \emph{bypass triples}. A bypass triple of sutures $\Gamma_1, \Gamma_2, \Gamma_3$ on $(\Sigma, F)$ can be regarded as an element $\Gamma_1 + \Gamma_2 + \Gamma_3$ of $CS^\infty_{sut} (\Sigma, F)$ or of $\widehat{CS}_{sut}(\Sigma,F)$ (setting sutures with closed loops equal to zero, using the map $p: CS^\infty(\Sigma,F) \To \widehat{CS}(\Sigma,F)$, which takes $CS^\infty_{sut}(\Sigma,F)$ to $\widehat{CS}_{sut}(\Sigma,F)$).
\begin{defn}\
\begin{enumerate}
\item
The $\Z_2$-vector space $\Byp^\infty(\Sigma,F)$ is the subspace of $CS^\infty_{sut}(\Sigma,F)$ spanned by bypass triples.
\item
The $\Z_2$-vector space $\widehat{\Byp}(\Sigma,F)$ is the subspace of $\widehat{CS}_{sut}(\Sigma,F)$ spanned by bypass triples.
\end{enumerate}
\end{defn}

Obviously $p(\Byp^\infty(\Sigma,F)) = \widehat{\Byp}(\Sigma,F)$. 

We note that bypass triples are defined not just for sutures, but for string diagrams in general: there is a more general notion of bypass surgery.

\subsection{Resolving crossings and differential}

Since the curves of a string diagram $s$ are oriented, any transverse double crossing $x$ of $s$ has a natural resolution; see figure \ref{fig:resolving_crossing}. After this resolution we have a string diagram, well defined up to ambient isotopy, with one fewer crossings, which we denote $r_x(s)$. This resolution may add or remove curves to or from $s$.

The idea is to set, for a string diagram $s$:
\[
\partial(s) = \sum_{x \text{ crossing of } s} r_x(s).
\]
This is a formal sum of string diagrams. Each $r_x(s)$ is well-defined up to ambient isotopy, hence up to spin homotopy and up to homotopy of string diagrams. We will prove (lemma \ref{lem:homotopic_diff}) that $\partial$ is actually well-defined on \emph{homotopy classes} of string diagrams without contractible loops, and on \emph{spin homotopy} classes of string diagrams in general. Hence we will obtain a well-defined linear map $\partial$ on both $CS^\infty(\Sigma,F)$ and $\widehat{CS}(\Sigma,F)$, which decreases the intersection gradings $I^\infty$, $\widehat{I}$.

Consider $\partial^2 (s)$; this is the sum of string diagrams obtained by resolving ordered pairs of double points. Obviously the diagram obtained by resolving crossing $x$ then crossing $y$ is ambient isotopic to the diagram obtained by resolving $y$ then $x$; so, once we have proved $\partial$ is well-defined, it's clear $\partial^2 = 0$ on both $CS^\infty(\Sigma,F)$ and $\widehat{CS}(\Sigma,F)$. It will then follow that the homologies $HS^\infty (\Sigma,F) = H( CS^\infty(\Sigma,F), \partial)$ and $\widehat{HS}(\Sigma,F) = H ( \widehat{CS}(\Sigma,F), \partial)$ are well defined.

We will also (section \ref{sec:why_these}) show why $\partial$ does \emph{not} define a differential on other similar vector spaces of string diagrams; for instance, not on regular homotopy classes of string diagrams. This motivates our particular chain complexes.

Once we have defined the Euler class of a string diagram on $D^2$, it will not be difficult to show that the differential preserves the Euler class, so that
\[
HS^\infty(D^2,F) = \bigoplus_e HS^\infty_e (D^2,F) \quad \text{ where } \quad HS^\infty_e (D^2,F) = H \left( CS^\infty_e (D^2, F), \partial \right).
\]

\subsection{The U map}
\label{sec:U_map}

In the $HF^\infty$ version of Heegaard Floer theory, there is a $U$ map; and likewise in $ECH$. This map has the effect of changing grading and counts some type of intersection. The resulting algebraic objects essentially become $\Z[U,U^{-1}]$-modules. Something roughly analogous happens with $HS^\infty$ and we will name the map obtained $U$.

Consider the spin homotopy class of a string diagram $s$ on a marked surface $(\Sigma,F)$. By definition, within this class we can perform ambient isotopies, type II and III string Reidemeister moves, and balanced type I string Reidemeister moves. The $U$ map simply performs ``unbalanced'' type I string Reidemeister moves, adding two anticlockwise whirls. Since $s$ is only defined up to spin homotopy, this anticlockwise whirl may be added anywhere in the diagram, and the result is well-defined up to spin homotopy. Similarly, the $U^{-1}$ map adds two clockwise whirls. It's not difficult to see that applying $U$ and then $U^{-1}$ to $s$ results in a string diagram spin homotopic to $s$. On $D^2$, we will see that $U^n$ adjusts the Euler class of $s$ by $4n$, i.e. $e(U^n s) = e(s) + 4n$.

It may seem somewhat curious that the $U$ map is given by adding \emph{two} whirls. However we will see in section \ref{sec:spin_base_case} that adding a single whirl gives a string diagram that is zero in homology.

In any case, there is a $\Z_2[U,U^{-1}]$ action on $CS^\infty(\Sigma,F)$, and it becomes a $\Z_2[U,U^{-1}]$-module. We will show that this in fact descends to an action on homology, so that $HS^\infty(\Sigma,F)$ is a $\Z_2[U,U^{-1}]$-module. Also $CS_{sut}^\infty(\Sigma,F)$ and $\Byp^\infty(\Sigma,F)$ can be regarded as $\Z_2[U,U^{-1}]$-modules, respectively generated by sutures and bypass triples. Note however that applying $U$ to a set of sutures results in a string diagram that is no longer a set of sutures; when we consider these spaces it will always be over $\Z_2[U,U^{-1}]$ and so this larger class of diagrams will be considered.

\subsection{Statements of main theorems}

Our main theorems are descriptions of the above homologies. They clearly include the statements in the introduction.

Theorem \ref{thm:non_alternating_homology} states that $HS^\infty(\Sigma,F)= \widehat{HS}(\Sigma,F) = 0$ when $F$ is not alternating; this is now a precise statement. We can state theorems \ref{thm:rough_main_thm1} and \ref{thm:rough_main_thm2} in full generality.

\begin{thm}
\label{thm:main_thm}
\label{thm:bypass_relation}
Let $(D^2,F)$ be an alternating marked disc with $|F|=2n$. Then
\[
\widehat{HS}(D^2,F) \cong SFH(D^2 \times S^1, F \times S^1) \cong \left( \Z_2 \0 \oplus \Z_2 \1 \right)^{\otimes (n - 1)},
\]
where the isomorphism $\widehat{HS}(D^2,F) \cong SFH(D^2 \times S^1, F \times S^1)$ is induced by the map which sends a set of sutures $\Gamma$ to the contact element of the corresponding contact structure on $(D^2 \times S^1, F \times S^1)$.

Moreover:
\begin{enumerate}
\item
Any nonzero homology class in $\widehat{HS}(D^2,F)$ is represented by a linear combination of string diagrams which are sets of sutures on $(D^2,F)$; in other words, the map
\[
\widehat{CS}_{sut}(D^2, F) \hookrightarrow \ker \partial \To \widehat{HS}(D^2,F)
\]
induced by inclusions and quotient by boundaries is surjective. The kernel of this map is precisely the span of bypass triples, hence
\[
\widehat{HS}(D^2,F) \cong \frac{\widehat{CS}_{sut}(D^2,F)}{\widehat{\Byp}(D^2,F)}.
\]
\item
The above isomorphisms restrict to Euler graded summands
\[
\widehat{HS}_e(D^2,F) \cong SFH_e(D^2 \times S^1, F \times S^1) \cong \bigoplus_{\substack{e_i \in \{\0, \1\} \\ \# \1 - \# \0 = e}} e_1 \otimes \cdots \otimes e_n
\]
and
\[
\widehat{HS}_e(D^2,F) \cong \frac{\widehat{CS}_{sut,e}(D^2,F)}{\widehat{\Byp}_e(D^2,F)}.
\]
\end{enumerate}
\end{thm}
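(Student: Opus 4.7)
The plan is to reduce the theorem to showing that inclusion of sutures induces an isomorphism
\[
\phi: \widehat{CS}_{sut}(D^2,F)/\widehat{\Byp}(D^2,F) \xrightarrow{\sim} \widehat{HS}(D^2,F).
\]
The identification of the left-hand side with $SFH(D^2 \times S^1, F \times S^1)$ and with $(\Z_2 \0 \oplus \Z_2 \1)^{\otimes(n-1)}$, including the Euler-class graded refinement and the realisation via contact elements, is the content of the first author's earlier work \cite{Me09Paper, Me12_itsy_bitsy}; so the new content is the identification with $\widehat{HS}$.

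Sets of sutures have no crossings, hence lie in $\ker \partial$, and inclusion gives a map $\widehat{CS}_{sut}(D^2,F) \to \widehat{HS}(D^2,F)$. To see bypass triples land in the image of $\partial$, for each bypass triple $\Gamma_1 + \Gamma_2 + \Gamma_3$ I exhibit a local three-crossing ``triangle'' diagram $T$ as in Figure~\ref{fig:fundamental_boundary}, whose three crossing resolutions are exactly $\Gamma_1, \Gamma_2, \Gamma_3$, so that $\partial T = \Gamma_1 + \Gamma_2 + \Gamma_3$. A local check confirms $\phi$ preserves Euler class. It remains to show $\phi$ is an isomorphism; the graded statement in part~(ii) will then follow.

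For \emph{surjectivity} I induct on the intersection grading $\widehat{I}$: given a diagram $s$ with $\widehat{I}(s) = k \geq 1$, I choose a crossing $x$ and apply a local triangle modification near $x$ producing $T_x(s)$ with $\partial T_x(s) = s + s_1 + s_2$, where after Type~II Reidemeister simplification the diagrams $s_1, s_2$ have strictly smaller intersection grading; hence $[s] = [s_1]+[s_2]$ in $\widehat{HS}$ and induction reduces $[s]$ to a combination of $0$-crossing diagrams, which on alternating $(D^2,F)$ without contractible closed curves are automatically sutures. For \emph{injectivity} I plan to use the creation/annihilation operators of Section~\ref{sec:creation_annihilation} as chain maps between string complexes for markings of different size, inducting on $n = |F|/2$; these operators should intertwine $\phi$ with the corresponding tensor-factor operators on $SFH$, so naturality plus injectivity of the $SFH$-level maps yields injectivity of $\phi$, with base case $n=1$ trivial (a single chord). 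The main obstacle I anticipate is the surjectivity induction: although the triangle move at $x$ is local and easy to write down, verifying that both additional resolutions $s_1, s_2$ actually drop in intersection grading requires control over how the inserted triangle interacts with crossings of $s$ outside the disc. The creation/annihilation machinery is designed to supply precisely this combinatorial control by reducing to strictly smaller marked discs where the interaction can be analysed case by case.
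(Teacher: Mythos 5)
Your strategy of reducing the theorem to the statement that the map $\widehat{CS}_{sut}/\widehat{\Byp} \to \widehat{HS}$ is an isomorphism, citing the first author's earlier work for the identification of the left-hand side with $SFH$ and $(\Z_2\0 \oplus \Z_2\1)^{\otimes(n-1)}$, matches the paper's setup. However, your proposed mechanism for \emph{surjectivity} has a genuine gap, and it is not the mechanism the paper uses. You want to induct on the intersection grading $\widehat{I}$ by applying a ``local triangle modification near a crossing $x$'' to produce $T_x(s)$ with $\partial T_x(s) = s + s_1 + s_2$ and $\widehat{I}(s_1), \widehat{I}(s_2) < \widehat{I}(s)$. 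But this modification is not constructed, and it is not clear it exists: the bypass-triangle picture of Figure~\ref{fig:fundamental_boundary} involves three strands meeting a boundary arc, whereas a crossing of $s$ only involves two strands in the interior. There is no obvious way to insert a tangle near an arbitrary interior crossing $x$ whose boundary recovers $s$ plus lower-filtration terms, and the paper never proves or uses a statement of this kind. Moreover, even if such a local move existed, establishing that both remaining resolutions drop in $\widehat{I}$ would require precisely the global control you flag as problematic; the creation/annihilation operators will not help here, because they change the marking $(D^2, F_n) \to (D^2, F_{n\pm 1})$ rather than rearranging intersections within a fixed $(D^2, F)$.

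The paper's proof avoids this entirely by inducting on $n$ rather than on $\widehat{I}$. The key tool is the \emph{crossed wires lemma} (Lemma~\ref{lem:decomposition}): a ``crossed wires'' operation $B$ performed near a chosen boundary point (not near a crossing) satisfies $\partial B + B \partial = 1 + a_-^*(\,\cdot\,) + a_+^*(\,\cdot\,)$, so that any cycle $x \in \widehat{CS}(D^2, F_{n+1})$ can be written as $a_-^* y + a_+^* z + \partial u$ with $y, z$ cycles in $\widehat{CS}(D^2, F_n)$. This reduces the rank of the marking, not the crossing count, and sidesteps the intersection-filtration obstacle completely. The fact that homology is supported in filtration $0$ (represented by sutures) is then a \emph{corollary} of this induction, not its engine. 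Similarly, the paper does not argue injectivity of $\phi$ via naturality with $SFH$-tensor maps; it directly exhibits $2^{n-1}$ linearly independent classes $v_w = a_w^* v_\emptyset$ in $\widehat{HS}(D^2, F_{n+1})$ by applying annihilation operators (Lemma~\ref{lem:linearly_independent}), then combines this lower bound with the surjectivity from the crossed wires lemma to get a dimension count. I recommend you replace the crossing-local induction with the boundary crossed-wires chain homotopy, and replace the naturality argument for injectivity with the explicit linear-independence argument via annihilations.
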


The description of $HS^\infty$ is similar, but with the $U$ map giving extra structure. In essence, we just take the previous answer and allow everything to be multiplied by powers of $U$; this amounts to tensoring with $\Z_2[U,U^{-1}]$.
\begin{thm}
\label{thm:main_theorem_infty}
Let $(D^2, F)$ be an alternating marked disc with $|F| = 2n$. Then
\[
HS^\infty(D^2,F) = \Z_2 [U,U^{-1}] \otimes \widehat{HS}(D^2,F).
\]
In particular:
\begin{enumerate}
\item
Any nonzero homology class in $HS^\infty(D^2,F)$ is represented by a $\Z_2[U,U^{-1}]$-linear combination of string diagrams which are sets of sutures on $(D^2,F)$. We have, as $\Z_2[U,U^{-1}]$-modules:
\[
HS^\infty(D^2,F) \cong \frac{CS^\infty_{sut}(D^2,F)}{\Byp^\infty(D^2,F)}
\cong \Z_2[U,U^{-1}] \otimes \left( \frac{\widehat{CS}_{sut}(D^2,F)}{\widehat{\Byp}(D^2,F)} \right).
\]
\item
Over $\Z_2$, $HS^\infty(D^2,F)$ decomposes over powers of $U$, and over Euler class, as
\[
HS^\infty(D^2,F) \cong \bigoplus_{j \in \Z} U^j \widehat{HS}(D^2,F) \cong \bigoplus_{j \in \Z} \bigoplus_{e \in \Z} U^j \widehat{HS}_e(D^2,F),
\]
and
\[
HS^\infty_e(D^2,F) \cong \bigoplus_{j \in \Z} U^j \widehat{HS}_{e-4j}(D^2,F)
\cong \bigoplus_{j \in \Z} U^j \frac{ \widehat{CS}_{sut,e-4j}(D^2,F)}{\widehat{\Byp}_{e-4j}(D^2,F)}.
\]
\end{enumerate}
\end{thm}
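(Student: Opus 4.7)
My plan is to parallel the proof of Theorem \ref{thm:main_thm} for $\widehat{HS}$ while tracking the $U$-action and the Euler class grading. The overarching idea is that $CS^\infty(D^2,F)$ splits into Euler-class summands $CS^\infty_e(D^2,F)$, that $U$ shifts Euler class by $4$, and that within each Euler-class summand the computation should be essentially isomorphic to the computation of $\widehat{HS}$ in the appropriate Euler degree.

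First I would verify the $\Z_2[U,U^{-1}]$-module structure on $HS^\infty(D^2,F)$. Because the insertion of two whirls can be carried out in an embedded arc in general position with respect to all crossings of the diagram, the $U$-map commutes with each individual crossing resolution $r_x$, and hence commutes with $\partial$. Since $U$ admits a chain-level inverse $U^{-1}$, it acts by isomorphisms on $HS^\infty$. Combined with the Euler shift $e(Us) = e(s) + 4$, this shows $CS^\infty_{sut}(D^2,F) \cong \Z_2[U,U^{-1}] \otimes \widehat{CS}_{sut}(D^2,F)$ and $\Byp^\infty(D^2,F) \cong \Z_2[U,U^{-1}] \otimes \widehat{\Byp}(D^2,F)$ as Euler-graded $\Z_2[U,U^{-1}]$-modules; the second isomorphism in part (i) of the theorem then follows formally from the first once the latter is in hand.

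The heart of the proof is to establish the spin version of the main claim: that the composition $CS^\infty_{sut}(D^2,F) \hookrightarrow \ker \partial \To HS^\infty(D^2,F)$ is surjective with kernel precisely $\Byp^\infty(D^2,F)$. For surjectivity I would adapt spin-lifted versions of the creation/annihilation operators from section \ref{sec:creation_annihilation} and the reduction arguments from section \ref{sec:homology_computation} to $CS^\infty$, using the Euler-class grading. Within each $CS^\infty_{e,i}(D^2,F)$ the inductive reduction of intersection number by boundaries should carry through, and since sets of sutures of a given Euler class form a basis for $CS^\infty_{sut,e}(D^2,F)$, one reduces to the already-proven $\widehat{HS}$ computation in that Euler degree. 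For the kernel, the bypass relations that appear as boundaries in $CS^\infty$ arise from exactly the same local pictures (figure \ref{fig:fundamental_boundary}) as in $\widehat{CS}$ and are $U$-equivariant, so any relation in homology between sutures forces a bypass relation after passing to the appropriate Euler-graded summand.

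The main obstacle is dealing with the ``extra'' generators in $CS^\infty$ that are not present in $\Z_2[U,U^{-1}] \otimes \widehat{CS}$, namely string diagrams containing contractible closed loops: in $\widehat{CS}$ these are killed by definition, but in $CS^\infty$ they are genuine generators that must be shown to be boundaries. I expect this can be handled by explicit null-homologous chains: up to spin homotopy, a contractible loop can be realized as one of the two resolutions of a self-crossing introduced by a local move, and by combining with bypass and $U$-shifted corrections one produces an explicit chain whose boundary contains the offending diagram. Once surjectivity and the kernel computation are proved in each Euler degree, the Euler-class decomposition in part (ii) is immediate from the grading, and the identification $HS^\infty(D^2,F) \cong \Z_2[U,U^{-1}] \otimes \widehat{HS}(D^2,F)$ follows.
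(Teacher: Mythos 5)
Your high-level outline matches the paper's strategy: track the $U$-action and Euler grading, use $\Z_2[U,U^{-1}]$-equivariant creation/annihilation operators, and run the crossed wires lemma to carry out the induction on $n$. You also correctly identify contractible closed loops as the new difficulty absent from $\widehat{CS}$. Where the proposal falls short is that your handling of this difficulty is a guess rather than an argument, and it is not what makes the paper's proof go through. You say you would produce "explicit null-homologous chains" by "combining with bypass and $U$-shifted corrections," but bypasses play no role at that point, and no concrete chain is exhibited; more importantly, the plan to "reduce to the already-proven $\widehat{HS}$ computation in that Euler degree" presupposes a comparison map from $CS^\infty_e$ to $\widehat{CS}$ that is a quasi-isomorphism, which is essentially the content of the theorem and cannot be assumed.

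What the paper actually does, and what your proposal is missing, is a careful and complete base case computation of $CS^\infty(D^2,F_1)$. There the spin homotopy classes of string diagrams are exactly parametrized by pairs $(m,e)$ with $m \geq 0$ the number of closed curves and $e \in 2\Z$ the Euler class, and the differential is computed explicitly: $\partial \sigma_{m,e} = \sigma_{m+1,e}$ when $m+\tfrac{e}{2}$ is odd and $\partial \sigma_{m,e}=0$ when it is even. This parity pattern makes each Euler summand a two-periodic acyclic or nearly-acyclic tower, so that the surviving homology consists precisely of the "whirly vacua" $\sigma_{0,4i}$, giving $HS^\infty(D^2,F_1)\cong\Z_2[U,U^{-1}]$. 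This calculation is the engine that eliminates the contractible-loop generators; your proposal never establishes it. Once the base case is in hand, the inductive step via the crossed wires lemma proceeds as you describe, and parts (i) and (ii) follow by the dimension/basis comparison over $\Z_2[U,U^{-1}]$. So the gap is concrete: without the $(m,e)$-parametrization and the resulting parity-dependent differential, you have no base case, and the induction does not start.
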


At this stage we may note the following:
\begin{enumerate}
\item
In $HS^\infty(D^2,F)$, any string diagram $s$ with a contractible loop is zero. \item
The decompositions in (ii) are pure algebraic manipulations, since $\Z_2[U,U^{-1}] = \oplus_{j \in \Z} U^j \Z_2$. We also use the fact that $U$ raises Euler class by $4$ in the second set of decompositions.
\end{enumerate}

In essence, in both variants of $HS$: all homology lies in sutures, and the only relation between these sutures is the bypass relation.

In \cite{Me09Paper} the first author defined a vector space $SFH_{comb}(T,n)$ to be the $\Z_2$ vector space generated by chord diagrams on the disc, which are sutures without closed curves (i.e. $\widehat{CS}_{sut}(D^2,F)$), modulo the bypass relation, i.e.
\[
SFH_{comb}(T,n) = \frac{\widehat{CS}_{sut}(D^2, F)}{\widehat{\Byp}(D^2,F)}.
\]
The first author gave a natural basis of chord diagrams / sutures for this space, and a natural partial order on this basis; described general chord diagrams with respect to this basis; related the various spaces $SFH_{comb}(T,n)$ via various operators; and considered relations to contact geometry, category theory, and sutured Floer homology. In particular, $SFH_{comb}(T,n) \cong SFH(T,n)$, the ($\Z_2$) sutured Floer homology of the solid torus $T$ with $n$ pairs of longitudinal sutures. In \cite{Me10_Sutured_TQFT} these considerations were extended to $\Z$ coefficients, and in \cite{Me12_itsy_bitsy} to general surfaces $(\Sigma,F)$. The above isomorphisms can be regarded as another type of combinatorial description of sutured Floer homology.

\section{Properties of the string complexes}
\label{sec:properties_of_string_complexes}

\subsection{Well-definition}

We first show that the differential $\partial$ makes $CS^\infty (\Sigma, F)$ and $\widehat{CS}(\Sigma,F)$ into well-defined chain complexes. We can always assume, after performing a regular homotopy if necessary, that a string diagram is in general position.

First consider $CS^\infty(\Sigma,F)$. An element $v \in CS^\infty(\Sigma,F)$ is given as $v = \sum_{j=1}^m s_j$, where $s_j$ are distinct string diagrams, up to spin homotopy. Two such elements $v = \sum_{j=1}^m s_j$, $v' = \sum_{j=1}^{m'} s'_j$ are equal in $CS^\infty(\Sigma,F)$ if and only if there is a bijection between the $\{s_j\}_{j=1}^m$ and $\{s'_j\}_{j=1}^{m'}$ with corresponding $s_j$ and $s'_j$ spin homotopic; in this case we say $v,v'$ are \emph{spin homotopic}. More generally, we will call formal sums of diagrams \emph{homotopic}, or \emph{regular homotopic}, or \emph{ambient isotopic} when their terms are bijective and the corresponding diagrams are homotopic of the corresponding type.

\begin{lem}
\label{lem:homotopic_diff}
The map $\partial : CS^\infty(\Sigma, F) \rightarrow CS^\infty(\Sigma, F)$ is well defined. That is, if string diagrams $s,s'$ are spin homotopic then $\partial s, \partial s'$ are spin homotopic.
\end{lem}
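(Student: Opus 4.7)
The plan is to reduce to the generators of spin homotopy. Since spin homotopy is by definition a sequence of ambient isotopies, Type II and Type III string Reidemeister moves, and balanced Type I Reidemeister moves, it suffices to verify that $\partial$ is invariant, up to spin homotopy of the image in $CS^\infty$, under each of these four local moves. For each, one analyses how the set of crossings of the diagram changes and shows that the net change in $\partial$ is spin-homotopically trivial.

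The first three cases admit direct local checks. Under an ambient isotopy, crossings and their oriented resolutions are carried term-by-term from $s$ to $s'$, so $\partial s$ and $\partial s'$ are ambient isotopic, hence spin homotopic. For a Type II move, two new crossings $c_1, c_2$ are introduced forming a local bubble, and the resolutions $r_{c_1}(s')$ and $r_{c_2}(s')$ are ambient isotopic to one another --- each simply pops the bubble at one end --- so they cancel over $\Z_2$. Type III is settled by the standard Yang--Baxter-style check: the three resolutions on one side of the move match up, up to ambient isotopy and relabeling, with those on the other side. In each of these three cases, the resolutions of original crossings of $s$ are spin homotopic to the corresponding resolutions of $s'$, since the local Reidemeister modification can itself be undone inside the resolved diagrams.

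The main obstacle is the balanced Type I case, where the move adds a positive whirl on an arc $a_+$ and a negative whirl on a disjoint arc $a_-$. Writing $s'$ for the modified diagram, the oriented resolution of each whirl's new self-crossing separates the whirl into a disjoint closed loop of turning number $\pm 1$, together with the same diagram with that whirl removed, while the other (opposite) whirl elsewhere on the diagram persists; this is a small check about oriented resolutions of whirls that I would dispatch once and for all. So the two new self-crossings contribute terms of the form $[s^- \sqcup \circ_+]$ and $[s^+ \sqcup \circ_-]$ to $\partial s'$, where $s^\pm$ denotes $s$ with only the remaining whirl still present and $\circ_\pm$ is the disjoint closed curve produced by the resolution. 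Meanwhile, resolutions of original crossings of $s$ acquire the pair of balanced whirls, which can be cancelled by an inverse balanced Type I, so they are spin homotopic to the corresponding resolutions in $\partial s$. It therefore remains to show $[s^- \sqcup \circ_+] = [s^+ \sqcup \circ_-]$ in $CS^\infty$, so that these two extra contributions cancel mod $2$.

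To establish this equality I exploit that, in the definition of a balanced Type I move, the two arcs are permitted to lie on different components of the diagram. Apply a balanced Type I to $s^- \sqcup \circ_+$ by adding a positive whirl on a disjoint arc of $s$ and a negative whirl on $\circ_+$; the resulting pair of opposite whirls on $s$ can be removed by an inverse balanced Type I move, yielding $s \sqcup \circ_+^-$, where $\circ_+^-$ is an immersed closed curve of turning number zero. The symmetric reduction of $s^+ \sqcup \circ_-$ gives $s \sqcup \circ_-^+$, again with a turning-zero closed curve disjoint from $s$. By the Whitney--Graustein theorem any two turning-zero immersed closed curves in the surface are regular homotopic, hence spin homotopic, so the two extra terms of $\partial s'$ coincide in $CS^\infty$. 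This is the delicate step, precisely because spin homotopy must be able to transfer winding between different curves of the diagram; the same argument fails for ordinary (unbalanced) Type I, which explains why $\partial$ descends to $CS^\infty$ but not to the analogous quotient by regular-homotopy-with-single-whirls.
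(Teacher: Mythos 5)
Your overall strategy matches the paper's: reduce to the four generating moves of spin homotopy (ambient isotopy, Type II, Type III, balanced Type I) and verify $\partial$-invariance for each. Your balanced Type~I treatment is careful and, by explicitly reducing to a comparison of two turning-zero closed curves via Whitney--Graustein, actually fills in a step the paper handles only by a figure.

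There is, however, a genuine gap in your Type~II case. You claim that $r_{c_1}(s')$ and $r_{c_2}(s')$ are ambient isotopic --- ``each simply pops the bubble at one end.'' That is true for the Type~II move in which the two strands are coherently oriented, but fails for the anti-parallel variant. In the anti-parallel case, resolving the lower crossing produces a simple cup at the bottom and an arc at the top carrying a \emph{new} self-crossing (a whirl encircling the bigon); resolving the upper crossing produces a cap at the top and places the whirl on the bottom arc instead. If those two arcs lie on distinct components of $s'$, the extra winding now sits on different curves, so the two resolutions are neither ambient isotopic nor even regular homotopic. This is exactly why the paper asserts only that the two sides are ``spin homotopic, though not always regular homotopic'' for Type~II. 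The cancellation you need still holds in $CS^\infty(\Sigma,F)$, but establishing it requires transferring the whirl between components via a balanced Type~I move --- precisely the kind of argument you already deploy for the balanced Type~I case. As written, though, the Type~II verification rests on a false ambient-isotopy claim and is incomplete.
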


\begin{proof}
Clearly if $s,s'$ are ambient isotopic string diagrams then $\partial s, \partial s'$ are sums of ambient isotopic string diagrams, hence ambient isotopic. It remains then to show that if $s,s'$ are related by a string Reidemeister II, III or balanced Reidemeister I move then $\partial s, \partial s'$ are spin homotopic.

Figures \ref{fig:type_I_differential}, \ref{fig:type_II_differential} and \ref{fig:type_III_differential} show that this is the case. In each we show the local effect of the Reidemeister moves. In applying $\partial$, we must resolve differentials both in the region where the Reidemeister move is performed, and also outside that region. For the balanced Reidemeister I move we have two local regions where Reidemeister moves are performed; there are two diagrams obtained after two whirls are added, and these are spin homotopic. There are two types of Reidemeister II and III moves (up to symmetry), since the strands are oriented, which affects how crossings are resolved. Performing $\partial$ on two diagrams related by a type II move gives diagrams which are spin homotopic, though not always regular homotopic; on two diagrams related by a type III move, gives diagrams which are regular homotopic. We conclude $\partial s, \partial s'$ are spin homotopic, and hence $\partial s = \partial s'$ in $CS(\Sigma,F)$.

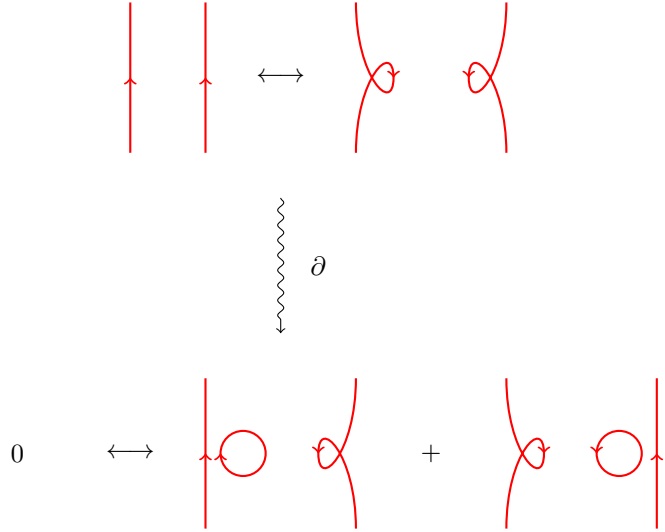
\begin{figure}[ht]
\begin{center}
\begin{tikzpicture}[scale=1, 
string/.style={thick, draw=red, postaction={nomorepostaction, decorate, decoration={markings, mark=at position 0.5 with {\arrow{>}}}}}]

\draw [string] (-1,-1) -- (-1,1);
\draw [string] (0,-1) -- (0,1);
\draw (1,0) node {$\longleftrightarrow$};
\draw [string] (2,-1) .. controls (2,0) and (2.5,.5) .. (2.5,0) .. controls (2.5,-0.5) and (2,0) .. (2,1);
\draw [string] (4,-1) .. controls (4,0) and (3.5,.5) .. (3.5,0) .. controls (3.5,-0.5) and (4,0) .. (4,1);

\draw [shorten >=1mm, -to, decorate, decoration={snake,amplitude=.4mm, segment length = 2mm, pre=moveto, pre length = 1mm, post length = 2mm}]
(1,-1.5) -- (1,-3.5);
\draw (1.5,-2.5) node {$\partial$};

\draw (-2.5,-5) node {$0$};
\draw (-1,-5) node {$\longleftrightarrow$};
\draw [string] (0,-6) -- (0,-4);
\draw [string] (0.8,-5) arc(0:-360:0.3);
\draw [string] (2,-6) .. controls (2,-5) and (1.5,-4.5) .. (1.5,-5) .. controls (1.5,-5.5) and (2,-5) .. (2,-4);
\draw (3,-5) node {$+$};
\draw [string] (4,-6) .. controls (4,-5) and (4.5,-4.5) .. (4.5,-5) .. controls (4.5,-5.5) and (4,-5) .. (4,-4);
\draw [string] (5.5,-5) circle (0.3 cm);
\draw [string] (6,-6) -- (6,-4);

\end{tikzpicture}
\caption{Balanced type I string Reidemeister move and differential.}
\label{fig:type_I_differential}
\end{center}
\end{figure}

\begin{figure}[ht]
\begin{center}
\begin{tikzpicture}[scale=1, 
string/.style={thick, draw=red, postaction={nomorepostaction, decorate, decoration={markings, mark=at position 0.5 with {\arrow{>}}}}}]

\draw [string] (0,-1) -- (0,1);
\draw [string] (0.5,-1) -- (0.5,1);
\draw (1.5,0) node {$\longleftrightarrow$};
\draw [string] (2.5,-1) .. controls (2.5,-0.5) and (3,-0.5) .. (3,0) .. controls (3,0.5) and (2.5,0.5) .. (2.5,1);
\draw [string] (3,-1) .. controls (3,-0.5) and (2.5,-0.5) .. (2.5,0) .. controls (2.5,0.5) and (3,0.5) .. (3,1);
\draw [string] (5,-1) -- (5,1);
\draw [string] (5.5,1) -- (5.5,-1);
\draw [string] (7.5,-1) .. controls (7.5,-0.5) and (8,-0.5) .. (8,0) .. controls (8,0.5) and (7.5,0.5) .. (7.5,1);
\draw [string] (8,1) .. controls (8,0.5) and (7.5,0.5) .. (7.5,0) .. controls (7.5,-0.5) and (8,-0.5) .. (8,-1);
\draw (6.5,0) node {$\longleftrightarrow$};

\draw [shorten >=1mm, -to, decorate, decoration={snake,amplitude=.4mm, segment length = 2mm, pre=moveto, pre length = 1mm, post length = 2mm}]
(4,-1.5) -- (4,-3.5);
\draw (4.5,-2.5) node {$\partial$};

\draw (-1.25,-5) node {$0$};
\draw (0,-5) node {$\longleftrightarrow$};
\draw [string] (1,-6) -- (1,-5)  .. controls (1,-4.5) and (1.5,-4.5) .. (1.5,-4);
\draw [string] (1.5,-6) -- (1.5,-5) .. controls (1.5,-4.5) and (1,-4.5) .. (1,-4);
\draw (2,-5) node {$+$};
\draw [string] (2.5,-6) .. controls (2.5,-5.5) and (3,-5.5) .. (3,-5) -- (3,-4);
\draw [string] (3,-6) .. controls (3,-5.5) and (2.5,-5.5) .. (2.5,-5) -- (2.5,-4);
\draw (5.25,-5) node {$0$};
\draw (6.5,-5) node {$\longleftrightarrow$};
\draw [string] (8,-4) .. controls (8,-4.5) and (7.5,-4.5) .. (7.5,-5) .. controls (7.5,-5.5) and (8,-5.5) .. (8,-5) .. controls (8,-4.5) and (7.5,-4.5) .. (7.5,-4);
\draw [string] (7.5,-6) .. controls (7.5,-5.5) and (8,-5.5) .. (8,-6);
\draw (8.5,-5) node {$+$};
\draw [string] (9,-6) .. controls (9,-5.5) and (9.5,-5.5) .. (9.5,-5) .. controls (9.5,-4.5) and (9,-4.5) .. (9,-5) .. controls (9,-5.5) and (9.5,-5.5) .. (9.5,-6);
\draw [string] (9.5,-4) .. controls (9.5,-4.5) and (9,-4.5) .. (9,-4);

\end{tikzpicture}
\caption{Type II string Reidemeister moves and differential.}
\label{fig:type_II_differential}
\end{center}
\end{figure}

\begin{figure}[ht]
\begin{center}
\begin{tikzpicture}[scale=0.8, 
string/.style={thick, draw=red, postaction={nomorepostaction, decorate, decoration={markings, mark=at position 0.5 with {\arrow{>}}}}}]

\draw [xshift=-1.5 cm, string] (-90:1) to [bend right=45] (90:1);
\draw [xshift=-1.5 cm, string] (30:1) to [bend right=45] (210:1);
\draw [xshift=-1.5 cm, string] (150:1) to [bend right=45] (-30:1);

\draw (0,0) node {$\longleftrightarrow$};

\draw [xshift=1.5 cm, string] (-90:1) to [bend left=45] (90:1);
\draw [xshift=1.5 cm, string] (30:1) to [bend left=45] (210:1);
\draw [xshift=1.5 cm, string] (150:1) to [bend left=45] (-30:1);

\draw [shorten >=1mm, -to, decorate, decoration={snake,amplitude=.4mm, segment length = 2mm, pre=moveto, pre length = 1mm, post length = 2mm}]
(0,-1.5) -- (0,-2.5);
\draw (0.5,-2) node {$\partial$};

\draw [xshift=-7.5 cm, yshift=-4 cm, string] (-90:1) to [bend right=45] (90:1);
\draw [xshift=-7.5 cm, yshift=-4 cm, string] (30:1) .. controls ($ (30:1) + (180:0.5) $) and ($ (180:0.3) + (90:0.5) $) .. (180:0.3) .. controls ($ (180:0.3) + (-90:0.5) $) and ($ (-30:1) + (180:0.5) $) .. (-30:1);
\draw [xshift=-7.5 cm, yshift=-4 cm, string] (150:1) to [bend left=90] (210:1);

\draw (-6,-4) node {$+$};

\draw [xshift=-4.5 cm, yshift=-4 cm, string] (-90:1) to [bend left=90] (-30:1);
\draw [xshift=-4.5 cm, yshift=-4 cm, string] (30:1) to [bend right=45] (210:1);
\draw [xshift=-4.5 cm, yshift=-4 cm, string] (150:1) .. controls ($ (150:1) + (-60:0.5) $) and ($ (-60:0.3) + (-150:0.5) $) .. (-60:0.3) .. controls ($ (-60:0.3) + (30:0.5) $) and ($ (90:1) + (-60:0.5) $) .. (90:1);

\draw (-3,-4) node {$+$};

\draw [xshift=-1.5 cm, yshift=-4 cm, string] (-90:1) .. controls ($ (-90:1) + (60:0.5) $) and ($ (60:0.3) + (-30:0.5) $) .. (60:0.3) .. controls ($ (60:0.3) + (150:0.5) $) and ($ (-150:1) + (60:0.5) $) .. (-150:1);
\draw [xshift=-1.5 cm, yshift=-4 cm, string] (30:1) to [bend left=90] (90:1);
\draw [xshift=-1.5 cm, yshift=-4 cm, string] (150:1) to [bend right=45] (-30:1);
\draw (0,-4) node {$\longleftrightarrow$};

\draw [xshift=1.5 cm, yshift=-4 cm, string] (30:1) to [bend left=45] (210:1);
\draw [xshift=1.5 cm, yshift=-4 cm, string] (150:1) to [bend right=90] (90:1);
\draw [xshift=1.5 cm, yshift=-4 cm, string] (-90:1) .. controls ($ (-90:1) + (120:0.5) $) and ($ (120:0.3) + (-150:0.5) $) .. (120:0.3) .. controls ($ (120:0.3) + (30:0.5) $) and ($ (-30:1) + (120:0.5) $) .. (-30:1);

\draw (3,-4) node {$+$};

\draw [xshift=4.5 cm, yshift=-4 cm, string] (30:1) .. controls ($ (30:1) + (-120:0.5) $) and ($ (-120:0.3) + (-30:0.5) $) .. (-120:0.3) .. controls ($ (-120:0.3) + (150:0.5) $) and ($ (90:1) + (-120:0.5) $) .. (90:1);
\draw [xshift=4.5 cm, yshift=-4 cm, string] (150:1) to [bend left=45] (-30:1);
\draw [xshift=4.5 cm, yshift=-4 cm, string] (-90:1) to [bend right=90] (-150:1);

\draw (6,-4) node {$+$};

\draw [xshift=7.5 cm, yshift=-4 cm, string] (30:1) to [bend right=90] (-30:1);
\draw [xshift=7.5 cm, yshift=-4 cm, string] (150:1) .. controls ($ (150:1) + (0:0.5) $) and ($ (0:0.3) + (90:0.5) $) .. (0:0.3) .. controls ($ (0:0.3) + (-90:0.5) $) and ($ (-150:1) + (0:0.5) $) .. (-150:1);
\draw [xshift=7.5 cm, yshift=-4 cm, string] (-90:1)  to [bend left=45] (90:1);

\draw [xshift=-1.5 cm, yshift=-8 cm, string] (90:1) to [bend left=45] (-90:1);
\draw [xshift=-1.5 cm, yshift=-8 cm, string] (30:1) to [bend right=45] (210:1);
\draw [xshift=-1.5 cm, yshift=-8 cm, string] (150:1) to [bend right=45] (-30:1);

\draw (0,-8) node {$\longleftrightarrow$};

\draw [xshift=1.5 cm, yshift=-8 cm, string] (90:1) to [bend right=45] (-90:1);
\draw [xshift=1.5 cm, yshift=-8 cm, string] (30:1) to [bend left=45] (210:1);
\draw [xshift=1.5 cm, yshift=-8 cm, string] (150:1) to [bend left=45] (-30:1);

\draw [shorten >=1mm, -to, decorate, decoration={snake,amplitude=.4mm, segment length = 2mm, pre=moveto, pre length = 1mm, post length = 2mm}]
(0,-9.5) -- (0,-10.5);
\draw (0.5,-10) node {$\partial$};

\draw [xshift=-7.5 cm, yshift=-12 cm, string] (90:1) -- (-90:1);
\draw [xshift=-7.5 cm, yshift=-12 cm, string] (30:1) arc (120:240:0.57735);
\draw [xshift=-7.5 cm, yshift=-12 cm, string] (150:1) arc (60:-60:0.57735);

\draw (-6,-12) node {$+$};

\draw [xshift=-4.5 cm, yshift=-12 cm, string] (90:1) -- (-30:1);
\draw [xshift=-4.5 cm, yshift=-12 cm, string] (30:1) -- (210:1);
\draw [xshift=-4.5 cm, yshift=-12 cm, string] (150:1) -- (-90:1);

\draw (-3,-12) node {$+$};

\draw [xshift=-1.5 cm, yshift=-12 cm, string] (90:1) -- (210:1);
\draw [xshift=-1.5 cm, yshift=-12 cm, string] (30:1) -- (-90:1);
\draw [xshift=-1.5 cm, yshift=-12 cm, string] (150:1) -- (-30:1);

\draw (0,-12) node {$\longleftrightarrow$};

\draw [xshift=1.5 cm, yshift=-12 cm, string] (90:1) -- (-30:1);
\draw [xshift=1.5 cm, yshift=-12 cm, string] (30:1) -- (210:1);
\draw [xshift=1.5 cm, yshift=-12 cm, string] (150:1) -- (-90:1);

\draw (3,-12) node {$+$};

\draw [xshift=4.5 cm, yshift=-12 cm, string] (90:1) -- (210:1);
\draw [xshift=4.5 cm, yshift=-12 cm, string] (30:1) -- (-90:1);
\draw [xshift=4.5 cm, yshift=-12 cm, string] (150:1) -- (-30:1);

\draw (6,-12) node {$+$};

\draw [xshift=7.5 cm, yshift=-12 cm, string] (90:1) -- (-90:1);
\draw [xshift=7.5 cm, yshift=-12 cm, string] (30:1) arc (120:240:0.57735);
\draw [xshift=7.5 cm, yshift=-12 cm, string] (150:1) arc (60:-60:0.57735);

\end{tikzpicture}
\caption{Type III string Reidemeister moves and differential.}
\label{fig:type_III_differential}
\end{center}
\end{figure}
\end{proof}

Similarly we may consider string diagrams up to homotopy in general. If $s_0, s_1$ are related by an ambient isotopy or string type II or III Reidemeister move then $\partial s_0$ and $\partial s_1$ are homotopic. If $s_0, s_1$ are related by a string type I Reidemeister move, however, then $\partial s_0$ and $\partial s_1$ are not homotopic: one contains a contractible loop where the other does not. However if we declare all string diagrams with contractible loops to be $0$, then the differential is well-defined and we obtain the following.
\begin{lem}
The map $\partial : \widehat{CS}(\Sigma, F) \rightarrow \widehat{CS}(\Sigma, F)$ is well defined. That is, if string diagrams $s,s'$ without contractible loops are homotopic then $\partial s, \partial s'$ are homotopic (possibly both zero).
\qed
\end{lem}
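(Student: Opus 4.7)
The plan is to reduce the statement to the previous lemma and handle only the genuinely new case, namely an unbalanced type I string Reidemeister move. After putting the diagrams in general position we may assume, by the standard Reidemeister theorem for generic oriented immersed curves on a surface, that $s$ and $s'$ are related by a finite sequence of ambient isotopies and string Reidemeister moves of types I, II, and III. For ambient isotopies the resolutions are transported bijectively, and for type II and type III moves the local calculations displayed in Figures \ref{fig:type_II_differential} and \ref{fig:type_III_differential} already show that $\partial s$ and $\partial s'$ are spin homotopic, and in particular homotopic. So the only new work is to control what happens at a single type I move.

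Suppose then that $s'$ is obtained from $s$ by inserting one whirl on an embedded subarc; the opposite direction and the opposite choice of whirl sign are symmetric. The crossings of $s'$ are in canonical bijection with the crossings of $s$, together with one extra crossing $y$ inside the whirl. For each crossing $x$ of $s$ the resolution $r_x(s')$ differs from $r_x(s)$ only by the surviving whirl on one strand, and a type I homotopy uncurls it, giving a homotopy $r_x(s') \simeq r_x(s)$. Since a whirl is a self-intersecting arc and not a separate closed component, it cannot itself create or destroy contractible closed curves; hence $r_x(s')$ contains a contractible closed loop if and only if $r_x(s)$ does, and the zero convention in $\widehat{CS}(\Sigma,F)$ matches term by term.

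For the remaining new crossing $y$, the local picture (second row of Figure \ref{fig:type_I_differential}) shows that resolving the whirl detaches a small embedded closed curve disjoint from the rest of the diagram: $r_y(s')$ equals $s$ (with the whirl erased) together with an extra contractible closed loop. By the definition of $\widehat{CS}(\Sigma,F)$ this term is zero. Hence
\[
\partial s' \;=\; \sum_{x} r_x(s') + r_y(s') \;\simeq\; \sum_{x} r_x(s) \;=\; \partial s,
\]
term by term in $\widehat{CS}(\Sigma,F)$, and the lemma follows. The only real obstacle is the local verification, independent of the orientation of the whirl and of the direction of its resolution, that $r_y(s')$ genuinely produces a new contractible loop disjoint from the rest of $s$; this is immediate from Figure \ref{fig:type_I_differential}. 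Once $\partial$ is well-defined on $\widehat{CS}(\Sigma,F)$, the identity $\partial^2 = 0$ follows, as in the spin-homotopy case, by observing that resolutions at distinct crossings commute up to ambient isotopy.
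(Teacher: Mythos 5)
Your proof is correct and follows essentially the same route as the paper: the paper's own argument (given in the paragraph preceding the lemma) observes that ambient isotopies and type II/III moves commute with $\partial$ exactly as in the $CS^\infty$ case, and that the only new feature of a type~I move is the single extra term containing a contractible loop, which the $\widehat{CS}$ convention kills. You have expanded this into a careful term-by-term matching, including the useful observation that adding a whirl cannot change which of the other resolutions $r_x$ acquire contractible closed curves (since a type~I move does not alter the free homotopy class of any closed component), so the zero conventions on both sides agree. The only minor imprecision is the phrase that a whirl ``cannot itself create or destroy contractible closed curves''--- it should be clear that the whirl is attached to an existing component (arc or closed), so it perturbs neither the set of closed components nor their free homotopy classes; your argument already uses exactly this, so the statement is fine in substance.
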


\subsection{Differential and filtration}

Now that $\partial$ is well-defined, we can see that, given a string diagram $s$ up to spin homotopy, $\partial^2 s$ is well-defined up to spin homotopy and is given by resolving all pairs of crossings in $s$. As each pair of crossings is resolved twice, mod $2$ the result is zero in $CS^\infty(\Sigma,F)$. If we set diagrams with contractible loops equal to zero and consider them up to homotopy, again $\partial^2=0$.
\begin{lem}
\label{lem:diff_squared_is_zero}
The operator $\partial$ is a differential on both $CS^\infty(\Sigma,F)$ and $\widehat{CS}(\Sigma,F)$.
\qed
\end{lem}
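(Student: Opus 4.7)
The plan is to reduce to a general-position representative $s$ and exploit the fact that resolving two distinct crossings in either order produces the same diagram up to ambient isotopy, so that every unordered pair of crossings contributes two identical terms to $\partial^2 s$ and these cancel mod $2$. Since $\partial$ is now known to be well-defined on both complexes by Lemma~\ref{lem:homotopic_diff}, we are free to compute on any general-position representative.

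Pick a general-position representative $s$ with finite crossing set $X$. Because each resolution $r_x$ is supported in a small disc $D_x$ around $x$ which, in general position, contains no other crossing of $s$, the crossings of the resolved diagram $r_x(s)$ are canonically identified with $X \setminus \{x\}$. Expanding the differential twice and using this identification gives
\[
\partial^2 s \;=\; \sum_{x \in X}\;\sum_{y \in X \setminus \{x\}} r_y\bigl(r_x(s)\bigr),
\]
where the sum is taken over ordered pairs of distinct crossings. For any two distinct $x, y \in X$ the local surgeries $r_x$ and $r_y$ occur in the disjoint discs $D_x$ and $D_y$, so they commute: $r_y(r_x(s))$ and $r_x(r_y(s))$ are in fact the same string diagram, hence certainly ambient isotopic. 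Each unordered pair $\{x,y\}$ therefore contributes two identical summands to $\partial^2 s$, which cancel in $\Z_2$.

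The argument goes through without change in $CS^\infty(\Sigma,F)$, since ambient isotopy implies spin homotopy. In $\widehat{CS}(\Sigma,F)$ one further notes that whether an iterated resolution contains a contractible closed curve depends only on its homotopy class, so within each unordered pair either both terms are killed by the quotient identifying such diagrams with zero, or both survive as the same homotopy class; in either case they cancel. The only step requiring any care is the locality claim that the crossings of $r_x(s)$ are canonically $X \setminus \{x\}$, and this is immediate from the local model in Figure~\ref{fig:resolving_crossing} once $s$ is in general position. I do not expect any genuine obstacle beyond recording this locality observation cleanly.
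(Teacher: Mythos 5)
Your proof for $CS^\infty(\Sigma,F)$ is complete and follows the paper's line exactly: locality of disjoint resolutions and $\Z_2$ cancellation over ordered pairs.

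For $\widehat{CS}(\Sigma,F)$, however, there is a real gap, one that the paper's own terse treatment also skates over. The identity $\partial^2 s = \sum_{x}\sum_{y\neq x} r_y(r_x(s))$ is \emph{not} what you get by iterating the $\widehat{CS}$-differential. When computing $\partial(\partial s)$, any first-stage term $[r_x(s)]$ that contains a contractible closed curve is already \emph{zero} in $\widehat{CS}$, so the entire branch $\sum_{y\neq x}[r_y(r_x(s))]$ is silently dropped from $\partial^2 s$. Your remark that $r_y(r_x(s))$ and $r_x(r_y(s))$ share a homotopy class --- hence are simultaneously killed or retained --- addresses only the \emph{final} diagrams of an unordered pair; it does not rule out the asymmetric scenario in which $r_x(s)$ acquires a contractible loop while $r_y(s)$ does not and $r_y(r_x(s))=r_x(r_y(s))$ has none either, in which case $\partial^2 s$ would retain the term $[r_x(r_y(s))]$ exactly once.

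To make the quotient argument legitimate you need the additional fact that $\sum_{y}[r_y(t)]=0$ in $\widehat{CS}(\Sigma,F)$ whenever $t$ itself contains a contractible closed curve, so that the dropped $x$-branches would have contributed nothing anyway. This does hold, but it requires an argument: homotope the contractible loop of $t$ to a tiny embedded circle disjoint from the rest of the diagram, obtaining $t'$; then every $r_y(t')$ retains the small contractible circle and hence vanishes, and the Reidemeister-move analysis of Lemma~\ref{lem:homotopic_diff} (which applies verbatim to diagrams with contractible closed curves) identifies $\sum_y [r_y(t)]$ with $\sum_y [r_y(t')] = 0$. Only after this step does the ordered-pair cancellation, carried out on the free $\Z_2$-module of ambient-isotopy classes, legitimately descend to $\widehat{CS}(\Sigma,F)$. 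Recording this would make the proof complete.
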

It follows that $HS^\infty(\Sigma,F)$ and $\widehat{HS}(\Sigma,F)$ are well-defined. 

We now show that the gradings by intersections become filtrations, i.e. $\partial$ lowers $I^\infty$ and $\widehat{I}$.

\begin{lem}
For $v \in CS^\infty(\Sigma,F)$, $I^\infty(\partial v) \leq I^\infty (v) - 1$.
\end{lem}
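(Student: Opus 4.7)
The plan is to exploit the fact that every resolution of a crossing literally removes one crossing, combined with the freedom to choose a representative of each spin homotopy class that realises the minimum number of crossings.

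First, write $v = \sum_{j=1}^m s_j$ as a sum of distinct spin homotopy classes, so that $I^\infty(v) = \max_j I^\infty(s_j)$. For each $j$, choose a representative $\widetilde{s}_j$ of the spin homotopy class of $s_j$ that is in general position and realises the minimum, so $\widetilde{s}_j$ has exactly $I^\infty(s_j)$ transverse double points. By lemma \ref{lem:homotopic_diff}, the class $\partial s_j \in CS^\infty(\Sigma,F)$ is computed, independently of choice of representative, as $\partial \widetilde{s}_j = \sum_x r_x(\widetilde{s}_j)$, summed over the crossings $x$ of $\widetilde{s}_j$.

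Next, observe that the local model for resolving a crossing (figure \ref{fig:resolving_crossing}) replaces two transverse strands by two disjoint non-crossing strands, and does nothing outside a small disc around $x$. Thus each resolved diagram $r_x(\widetilde{s}_j)$ has exactly $I^\infty(s_j) - 1$ transverse double points. In particular, the spin homotopy class of $r_x(\widetilde{s}_j)$ admits a representative with $I^\infty(s_j) - 1$ crossings, so $I^\infty\!\left(r_x(\widetilde{s}_j)\right) \leq I^\infty(s_j) - 1$.

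Finally, assemble the bound: $\partial v = \sum_j \sum_x r_x(\widetilde{s}_j)$ is a formal sum in which every term has $I^\infty$ at most $I^\infty(s_j) - 1 \leq I^\infty(v) - 1$. After collecting like spin homotopy classes and cancelling pairs mod $2$, the remaining terms still satisfy the same bound, so $I^\infty(\partial v) \leq I^\infty(v) - 1$. The edge case $I^\infty(v) = 0$ is handled by choosing the $\widetilde{s}_j$ to have no crossings at all, whence $\partial \widetilde{s}_j = 0$ for every $j$ and $\partial v = 0$, for which $I^\infty(\partial v) = -\infty$ by convention. There is no serious obstacle here; the only thing to be careful about is to invoke lemma \ref{lem:homotopic_diff} to justify computing $\partial s_j$ using the minimising representative $\widetilde{s}_j$ rather than an arbitrary one.
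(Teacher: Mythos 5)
Your proof is correct and takes essentially the same approach as the paper: decompose $v$ into spin homotopy classes, pick a crossing-minimising representative of each, observe that resolving a single crossing drops the crossing count by exactly one, and take the maximum. The paper's version is slightly terser (it does not spell out the $I^\infty(v)=0$ edge case or the cancellation remark), but the argument is identical in substance.
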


\begin{proof}
First, take a string diagram $s$, and suppose $s$ is in general position and has the least number of self-intersections among spin-homotopic string diagrams. Then $\partial s$ is given as a sum $\sum_j s_{j}$, where each $s_j$ is given by resolving a single crossing of $s$, and hence has fewer crossings than $s$. Now $I^\infty(s_j)$ is the least number of crossings in a string diagram spin-homotopic to $s_j$, and hence $I^\infty(s_j) \leq I^\infty(s)-1$. We then have $I^\infty(\partial s) = \max_j I^\infty (s_j) \leq I^\infty (s)-1$.

Taking now a general element $v \in CS(\Sigma,F)$. We may take $v = \sum_i s_i$, where the $s_i$ are in general position, pairwise non-spin-homotopic, and each $s_i$ minimizes self-intersections in its spin homotopy class; so $I^\infty (v) = \max_i I^\infty (s_i)$. Let each $\partial s_i = \sum_j s_{ij}$, so each $I^\infty(s_{ij}) \leq I^\infty(s_i) - 1 \leq I^\infty(v)-1$. Then $I^\infty(\partial v) \leq \max_{i,j} I^\infty(s_{ij}) \leq I^\infty(v)-1$.
\end{proof}

A similar result holds for $\widehat{CS}$, referring everywhere to homotopy rather than spin homotopy, and neglecting any diagrams that have contractible loops.
\begin{lem}
For $v \in \widehat{CS}(\Sigma,F)$, $\widehat{I}(\partial v) \leq \widehat{I}(v)-1$.
\qed
\end{lem}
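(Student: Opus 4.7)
The plan is to follow the proof of the preceding lemma for $CS^\infty$ essentially verbatim, with the only modification being to account for resolutions that introduce a contractible closed curve, which are set to zero in $\widehat{CS}(\Sigma,F)$.

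First I would treat a single generator. Fix a string diagram $s$ without contractible closed curves which is in general position and attains the minimum number of self-intersections in its homotopy class, so that this minimum equals $\widehat{I}(s)$. Then $\partial s = \sum_j s_j$, where each $s_j$ is obtained by resolving a single crossing of $s$ and therefore has exactly one fewer crossing than $s$.

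Next I would classify each summand $s_j$. If $s_j$ contains a contractible closed curve then $s_j = 0$ in $\widehat{CS}(\Sigma,F)$, which contributes nothing and is consistent with the convention $\widehat{I}(0) = -\infty$. If $s_j$ has no contractible closed curve, then $\widehat{I}(s_j)$, being by definition the minimum number of crossings over the homotopy class of $s_j$, is bounded above by the actual number of crossings of $s_j$, which is $\widehat{I}(s) - 1$. Thus in every case $\widehat{I}(s_j) \leq \widehat{I}(s) - 1$, and consequently $\widehat{I}(\partial s) = \max_j \widehat{I}(s_j) \leq \widehat{I}(s) - 1$.

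Finally, for a general element I write $v = \sum_i s_i$, where the $s_i$ are pairwise non-homotopic generators (no contractible loops) each chosen in general position with $\widehat{I}(s_i)$ self-intersections, so that $\widehat{I}(v) = \max_i \widehat{I}(s_i)$. Expanding $\partial v = \sum_{i,j} s_{ij}$ and applying the previous step to each $\partial s_i$ gives $\widehat{I}(s_{ij}) \leq \widehat{I}(s_i) - 1 \leq \widehat{I}(v) - 1$, whence $\widehat{I}(\partial v) \leq \widehat{I}(v) - 1$, as required.

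There is no real obstacle here: the argument is a direct adaptation of the preceding proof, and the only point requiring attention is that diagrams which vanish (because they acquire a contractible loop) cannot raise the maximum, which is automatic once one adopts the convention $\widehat{I}(0) = -\infty$ already set up in the earlier discussion of the intersection grading.
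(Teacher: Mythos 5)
Your proof is correct and is exactly the adaptation the paper intends: the paper states this lemma without a written proof, remarking only that ``a similar result holds for $\widehat{CS}$, referring everywhere to homotopy rather than spin homotopy, and neglecting any diagrams that have contractible loops,'' and you have carried out that adaptation faithfully. The one point you rightly flag---that summands $s_j$ acquiring a contractible loop vanish in $\widehat{CS}$ and so cannot raise the maximum, thanks to the convention $\widehat{I}(0)=-\infty$---is precisely the ``neglecting'' the paper gestures at, and the rest tracks the preceding $CS^\infty$ argument line by line.
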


\subsection{Why these chain complexes?}
\label{sec:why_these}

It may have seemed that we chose two particular types of string diagrams and types of homotopy arbitrarily. We now give a brief argument why.

First, it is natural to consider the collection of all string diagrams. Ideally we would like to consider them up to homotopy, the most general of the ``types of homotopy'' we consider. But the vector space generated by homotopy classes of string diagrams on $(\Sigma,F)$ has no well-defined differential: lemma \ref{lem:homotopic_diff} fails for this vector space. That is, there exist homotopic diagrams $s,s'$ for which $\partial s$ and $\partial s'$ are in no sense homotopic.

For example, consider two string diagrams $s,s'$ related by a type I Reidemeister move (say $s'$ has an extra whirl). So $s,s'$ are homotopic, yet $\partial s$ and $\partial s'$ differ by one term: $\partial s'$ has an extra term with a contractible loop. This is still a nontrivial diagram, and so $\partial s$, $\partial s'$ are not homotopic. Our two chain complexes arise from restricting the type of diagram, or the type of homotopy considered, in a minimal way.

If we want to restrict the type of homotopy considered, examining figure \ref{fig:type_II_differential} leads naturally to the idea that we should consider string diagrams related by a balanced type I string Reidemeister moves as equivalent, and hence to the idea of spin homotopy. And we have seen that, using the finer notion of spin homotopy class, we obtain a well-defined differential and chain complex with filtration, namely $CS^\infty(\Sigma,F)$.

Alternatively, if we want to consider always homotopy classes of diagrams, then the difficulties with type I Reidemeister moves impose the condition that contractible loops should be zero. And indeed we have seen that, considering only string diagrams without contractible loops, we obtain the well-defined complex $\widehat{CS}(\Sigma,F)$. 

The two chain complexes $CS^\infty$ and $\widehat{CS}$, then, are arguably the most natural chain complexes which can be constructed out of curves on a marked surface $(\Sigma,F)$.

\subsection{Definition of $U$}
\label{sec:defn_of_U}

We can now define the $U$ map on $CS^\infty(\Sigma,F)$, for any marked surface $(\Sigma,F)$. 

Given a spin homotopy class of string diagram $\sigma$, we define $U\sigma$ to be obtained from $\sigma$ by adding two anticlockwise whirls. Obviously $U$ does not change the homotopy class of $\sigma$. As two string diagrams realted by a balanced type I Reidemeister string move are spin homotopic, we need not specify where we add the whirls. Likewise we define $U^{-1} \sigma$ by adding two clockwise whirls to $\sigma$. We can see that $U U^{-1} \sigma = U^{-1} U \sigma = \sigma$, and in general $U^i U^j \sigma = U^{i+j} \sigma$ for any $i,j \in \Z$. Indeed we have well-defined maps
\[
U^{\pm 1} : CS^\infty (\Sigma, F) \To CS^\infty(\Sigma,F)
\]
which make $CS^\infty(\Sigma,F)$ into a $\Z_2[U,U^{-1}]$-module.

We also note that $U$ commutes with $\partial$, as shown in diagram \ref{fig:U_commutes}. Note here that it is crucial that $U$ adds an even number of whirls, so the terms obtained by resolving crossings introduced by $U$ cancel. It follows then that $HS^\infty(\Sigma,F)$ also has the structure of a $\Z_2[U,U^{-1}]$-module.

\begin{figure}[h]
\begin{center}
\begin{tikzpicture}[scale=1, 
string/.style={thick, draw=red, postaction={nomorepostaction, decorate, decoration={markings, mark=at position 0.5 with {\arrow{>}}}}}]

\draw (-1,0) node {$\partial$};
\draw [string] (0,-1) .. controls (0,-0.5) and (-0.5,0) .. (-0.5,-0.5) .. controls (-0.5,-1) and (0,-0.5) .. (0,0) .. controls (0,0.5) and (-0.5,1) .. (-0.5,0.5) .. controls (-0.5,0) and (0,0.5) .. (0,1);
\draw (0.5, -1) node {$s$};
\draw (1,0) node {$=$};
\draw [string] (2,-1) -- (2,0) .. controls (2,0.5) and (1.5,1) .. (1.5,0.5) .. controls (1.5,0) and (2,0.5) .. (2,1);
\draw [string] (1.5,-0.5) arc (-90:90:0.25) arc (90:270:0.25);
\draw (2.5,-1) node {$s$};
\draw (3,0) node {$+$};
\draw [string] (4,-1) .. controls (4,-0.5) and (3.5,0) .. (3.5,-0.5) .. controls (3.5,-1) and (4,-0.5) .. (4,0) -- (4,1);
\draw [string] (3.5,0) arc (-90:90:0.25) arc (90:270:0.25);
\draw (4.5,-1) node {$s$};
\draw (5,0) node {$+$};
\draw [string, xshift=6 cm] (0,-1) .. controls (0,-0.5) and (-0.5,0) .. (-0.5,-0.5) .. controls (-0.5,-1) and (0,-0.5) .. (0,0) .. controls (0,0.5) and (-0.5,1) .. (-0.5,0.5) .. controls (-0.5,0) and (0,0.5) .. (0,1);
\draw (6.5,-1) node {$\partial s$};
\draw (7,0) node {$=$};
\draw [string, xshift=8 cm] (0,-1) .. controls (0,-0.5) and (-0.5,0) .. (-0.5,-0.5) .. controls (-0.5,-1) and (0,-0.5) .. (0,0) .. controls (0,0.5) and (-0.5,1) .. (-0.5,0.5) .. controls (-0.5,0) and (0,0.5) .. (0,1);
\draw (8.5,-1) node {$\partial s$};
\end{tikzpicture}
\caption{$\partial U = U \partial$}
\label{fig:U_commutes}
\end{center}
\end{figure}

\section{Non-alternating case}
\label{sec:non-alternating}

We now prove the theorem \ref{thm:zero_thm}: if $F$ is not alternating then $HS^\infty(\Sigma,F) = \widehat{HS}(\Sigma,F) = 0$. 

The proof is based upon a \emph{switching} operation $W$ on a string diagram $s$. Since $F$ is not alternating, there must be two consecutive marked points on $\partial \Sigma$ with the same label, ``in'' or ``out''. Let them be $p$ and $q$. The operation $S$ ``switches'' $s$ between $p$ and $q$ as shown in figure \ref{fig:switching}: it alters $s$ near $p$ and $q$, so that the strand which previously began at $p$, now begins at $q$; and vice versa, the strand which previously began at $q$, now begins at $p$; introducing precisely one new crossing in the process. 

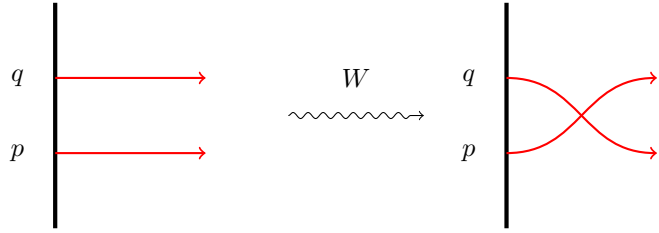
\begin{figure}[ht]
\begin{center}
\begin{tikzpicture}[
scale=1, 
string/.style={thick, draw=red, -to},
boundary/.style={ultra thick}]

\draw [boundary] (0,0) -- (0,3);
\draw [string] (0,1) -- (2,1);
\draw [string] (0,2) -- (2,2);
\draw (-0.5,1) node {$p$};
\draw (-0.5,2) node {$q$};

\draw [shorten >=1mm, -to, decorate, decoration={snake,amplitude=.4mm, segment length = 2mm, pre=moveto, pre length = 1mm, post length = 2mm}]
(3,1.5) -- (5,1.5);
\draw (4,2) node {$W$};

\draw [boundary] (6,0) -- (6,3);
\draw [string] (6,1) .. controls (7,1) and (7,2) .. (8,2);
\draw [string] (6,2) .. controls (7,2) and (7,1) .. (8,1);
\draw (5.5,1) node {$p$};
\draw (5.5,2) node {$q$};

\end{tikzpicture}
\caption{Switching operation.} \label{fig:switching}
\end{center}
\end{figure}

If $s$ and $s'$ are ambient isotopic, then clearly $Ws$ and $Ws'$ are also. Similarly, if $s,s'$ are related by type I, II or III string Reidemeister move, then so are $Ws$ and $Ws'$. So $W$ certainly gives a well-defined operation on string diagrams up to homotopy, regular homotopy or spin homotopy. Further if $s$ is without contractible loops then so too is $Ws$. So we obtain well-defined linear maps on $CS^\infty$ and $\widehat{CS}$; in a minor abuse of notation we denote both by $W$.
\[
W \; : \; CS^\infty(\Sigma,F) \To CS^\infty(\Sigma,F), \quad
\widehat{CS}(\Sigma, F) \To \widehat{CS}(\Sigma,F).
\]

Given a string diagram $s$, consider $\partial W s$. First $Ws$ is obtained from $s$ by the switching operation near $p$ and $q$, introducing one more crossing, and then $\partial Ws$ is obtained from $Ws$ by resolving each crossing and summing the resulting diagrams. (If we are working in $CS^\infty$, this is a sum of diagrams up to spin homotopy; if in $\widehat{CS}$, up to homotopy, and setting contractible loops to zero.)

The diagram obtained by resolving the new intersection point in $Ws$ is just $s$. The diagrams obtained from resolving the other intersections points are just the diagrams in $\partial s$, but with the switching $W$ then applied. Thus we have $\partial W s = s + W \partial s$; see figure \ref{fig:chain_homotopy}.

\begin{figure}[h]
\begin{center}
\begin{tikzpicture}[
scale=1, 
string/.style={thick, draw=red, -to},
boundary/.style={ultra thick}]

\draw (-1,1.5) node {$\partial$};
\draw [boundary] (0,0) -- (0,3);
\draw [rounded corners=8pt] (0,3) -- (3,3) -- (3,0) -- (0,0);
\draw [string] (0,1) .. controls (0.5,1) and (0.5,2) .. (1,2);
\draw [string] (0,2) .. controls (0.5,2) and (0.5,1) .. (1,1);
\draw (2,1.5) node {$s$};

\draw (3.5,1.5) node {$=$};

\draw [xshift=4 cm, boundary] (0,0) -- (0,3);
\draw [xshift=4 cm, rounded corners=8pt] (0,3) -- (3,3) -- (3,0) -- (0,0);
\draw [xshift=4 cm, string] (0,1) -- (1,1);
\draw [xshift=4 cm, string] (0,2) -- (1,2);
\draw [xshift=4 cm] (2,1.5) node {$s$};

\draw (7.5,1.5) node {$+$};

\draw [xshift=8 cm, boundary] (0,0) -- (0,3);
\draw [xshift=8 cm, rounded corners=8pt] (0,3) -- (3,3) -- (3,0) -- (0,0);
\draw [xshift=8 cm, string] (0,1) .. controls (0.5,1) and (0.5,2) .. (1,2);
\draw [xshift=8 cm, string] (0,2) .. controls (0.5,2) and (0.5,1) .. (1,1);
\draw [xshift=8 cm] (2,1.5) node {$\partial s$};

\draw (1.5,-1) node {$\partial Ws$};
\draw (3.5,-1) node {$=$};
\draw (5.5,-1) node {$s$};
\draw (7.5,-1) node {$+$};
\draw (9.5,-1) node {$W \partial s$};

\end{tikzpicture}
\caption{The operation $S$ is a chain homotopy.} \label{fig:chain_homotopy}
\end{center}
\end{figure}

This equation holds both in $CS^\infty(\Sigma,F)$ and $\widehat{CS}(\Sigma,F)$. We may therefore (always mod 2) write
\[
\partial W + W \partial = 1.
\]
That is, $W$ is a chain homotopy between the chain maps $1$ and $0$ on $CS^\infty(\Sigma,F)$ or $\widehat{CS}(\Sigma,F)$. It follows that
\[
HS^\infty(\Sigma,F) = \widehat{HS}(\Sigma,F) = 0
\]
as desired. 

Explicitly, if $x \in \ker \partial$, then $\partial Wx + W \partial x = x$ so that $x = \partial (Wx)$ is a boundary.

Note that this proof works even if $F$ does not have an even number of points on each boundary component; all we require is two consecutive points of $F$ of the same sign somewhere on $\partial \Sigma$.

Henceforth we will assume all markings are alternating, so that $(\Sigma,F)$ always has the structure of a sutured background.

\section{The generalised Euler class on discs}
\label{sec:generalised_euler_class}

A set of sutures $\Gamma$ on a sutured background $(\Sigma,F)$ is has an \emph{Euler class} given by $e(\Gamma) = \chi(R_+) - \chi(R_-)$. We now generalise this notion to the Euler class of a string diagram on an alternating disc $(D^2,F)$.

The idea is that the Euler class $e(\Gamma)$ can be described in terms of the \emph{curvature} of the curves of $\Gamma$ with respect to a standard metric on $D^2$. 

Note that this section applies only to discs. Complications arise when trying to apply these ideas to more general surfaces.

\subsection{Euler class of sutures via curvature}

We first reinterpret the Euler class for sutures on discs in terms of curvature. Let $\Gamma$ be a set of sutures on the disc sutured background $(D, F)$, where $F$ consists of $2n$ points alternating in sign around $\partial D$. Consider $D$ as the unit disc in the Euclidean plane, with the $2n$ points of $F$ spaced equally around the unit circle, and by ambient isotopy assume that all sutures intersect $\partial D$ at right angles. 

Let $\gamma$ be a suture, i.e. a component of $\Gamma$; so $\gamma$ is either a properly embedded arc in $D$ joining two points of $F$, or is an embedded closed curve. Suppose $\gamma$ is traversed at unit speed, and consider its velocity vector; it turns through some total angle $k$, measured anticlockwise. Note that if $\gamma$ is a closed curve then, being embedded, $k= \pm 2\pi$. If $\gamma$ is an arc, as the endpoints $F$ are equally spaced and $\gamma$ meets $\partial D$ at right angles, $k$ must be an integer multiple of $2\pi/2n = \pi/n$. (More precisely, taking into account labels on sutures: if $n$ is odd then $k$ is an integer multiple of $2\pi/n$; if $n$ is even then $k$ is of the form $\frac{(2l+1)\pi}{n}$ for some integer $l$.) This $k$ is the total curvature of $\gamma$.

Let the components of $\Gamma$ be $\gamma_1, \ldots, \gamma_M$, and let $\gamma_i$ have curvature $k_i$.
\begin{lem}
$e(\Gamma) = \frac{1}{\pi} \sum_{i=1}^M k_i$.
\end{lem}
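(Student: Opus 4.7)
My plan is to apply the Gauss--Bonnet theorem to each of the regions $R_+$ and $R_-$, using the flat Euclidean metric on $D \subset \mathbb{R}^2$, and then subtract. Since the metric is flat, the interior integral of Gaussian curvature vanishes and Gauss--Bonnet reduces to
\[
2\pi \chi(R_\pm) = \int_{\partial R_\pm} k_g\,ds \;+\; \sum_{\text{corners}} \theta_{\text{ext}},
\]
with $\partial R_\pm$ oriented as the boundary of $R_\pm$ regarded as a subset of the oriented plane.

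The boundary of each $R_\pm$ decomposes into arcs of $\partial D$ (namely $C_\pm$) and arcs of $\Gamma$, meeting at the $2n$ points of $F$. First I would compute each contribution separately. Along the unit circle the geodesic curvature is $+1$, so the arc integral along $C_\pm$ equals the total length $\ell(C_\pm)$; since the $2n$ points of $F$ are equally spaced, $\ell(C_+) = \ell(C_-) = \pi$. Each point of $F$ is a corner of both $R_+$ and $R_-$ where a suture meets $\partial D$ orthogonally, so every exterior angle equals $\pi/2$, contributing $2n \cdot \pi/2 = n\pi$ to each region. For a suture $\gamma_i$, the defining property $\overline{\partial R_\pm \setminus \partial D} = \Gamma$ as oriented 1-manifolds, together with $R_\pm$ carrying the $\pm\Sigma$ orientation, means that when both $R_+$ and $R_-$ are viewed with the ambient planar orientation, the boundary orientation of $\gamma_i$ as part of $\partial R_-$ is opposite to its boundary orientation as part of $\partial R_+$. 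Since $\int k_g\,ds$ changes sign under reversal of parameterization, and since the suture orientation on $\gamma_i$ matches its orientation in $\partial R_+$, we get contributions $+k_i$ to the Gauss--Bonnet formula for $R_+$ and $-k_i$ to the formula for $R_-$.

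Assembling, I obtain
\[
2\pi \chi(R_+) = \ell(C_+) + \sum_{i=1}^M k_i + n\pi, \qquad
2\pi \chi(R_-) = \ell(C_-) - \sum_{i=1}^M k_i + n\pi.
\]
Subtracting, the boundary length and corner contributions cancel since $\ell(C_+) = \ell(C_-)$, leaving $2\pi \bigl(\chi(R_+) - \chi(R_-)\bigr) = 2 \sum_i k_i$, which gives exactly $e(\Gamma) = \tfrac{1}{\pi} \sum_{i=1}^M k_i$.

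The step I expect to require the most care is pinning down the orientation convention for the sutures as boundary components of $R_+$ versus $R_-$: one has to verify that the $\pm\Sigma$ orientations on $R_\pm$, interpreted via the planar Gauss--Bonnet formula (which implicitly uses the ambient orientation), really do produce opposite signs for the $\gamma_i$ contributions. Once that is nailed down, the rest is a direct computation. The hypotheses that $F$ is equally spaced and that sutures meet $\partial D$ orthogonally are used precisely to make $\ell(C_+) = \ell(C_-)$ and to make each exterior angle $\pi/2$, so that the non-suture terms cancel in the difference.
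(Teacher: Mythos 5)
Your proof is correct, and it takes a genuinely cleaner route than the paper's. The paper proceeds by an elementary turning-angle count: it first assumes $\Gamma$ has no closed components, so that every piece of $R_\pm$ is a disc, uses the fact that traversing the boundary of a disc gives one full turn of $2\pi$ to count the number of components of $R_\pm$, and then handles closed curve sutures by a separate induction (adding one closed curve at a time). Your argument invokes Gauss--Bonnet with corners, applied to each of $R_\pm$ with the flat metric, and subtracts. This buys you two things: Gauss--Bonnet automatically handles non-disc components (annular components of $R_\pm$ caused by closed sutures) because it reads off $\chi$ rather than the number of components, so no case split or induction over closed curves is needed; and the bookkeeping of the $\ell(C_\pm)$ and corner terms is transparently symmetric, so their cancellation upon subtraction is immediate. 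The one place that genuinely needs care, and which you correctly flag, is the orientation bookkeeping: verifying that the suture orientation agrees with the planar boundary orientation of $R_+$ and is opposite to that of $R_-$ (because $R_-$ carries the $-\Sigma$ orientation but Gauss--Bonnet in the plane is applied with the ambient $+\Sigma$ orientation), so that the $k_i$ enter with opposite signs and the terms $2\sum_i k_i$ survive the subtraction. Your handling of that point is correct. The only cosmetic difference is that the paper's proof is self-contained at a lower level of machinery, whereas yours leans on Gauss--Bonnet as a black box; for a reader who knows Gauss--Bonnet, yours is shorter and arguably more illuminating.
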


\begin{proof}
First suppose $\Gamma$ has no closed curve components, so the number of sutures $M=n$ and $R_+, R_-$ both consist of discs. Consider a disc component of $R_+$: its boundary consists of sutures $\gamma_j$ (traversed in the direction of $\gamma_j$), and arcs of $\partial D$ of curvature $\pi/n$, which meet sutures at right angles. Moreover, as we traverse the boundary of all of $R_+$, we traverse each suture of $\Gamma$, precisely half of $\partial D$, and precisely $2n$ right angles. As each component of $R_+$ has a single full turn around its boundary, the number of components of $R_+$ is
\[
\frac{1}{2\pi} \left( \left( \sum_{i=1}^n k_i \right) + \pi + 2n \frac{\pi}{2}
\right) = \frac{1}{2\pi} \left( \sum_{i=1}^n k_i \right) + \frac{n+1}{2}.
\]
Similarly, considering $\partial R_-$, the number of components of $R_-$ is
\[
- \frac{1}{2\pi} \left( \sum_{i=1}^n k_i \right) + \frac{n+1}{2}.
\]
As all components of $R_\pm$ are discs, we have
\[
e(\Gamma) = \chi(R_+) - \chi(R_-) = \frac{1}{\pi} \sum_{i=1}^n k_i
\]
as desired.

Now suppose $\Gamma$ also contains closed curve components. Adding an anticlockwise closed curve suture adds an extra disc region to $R_+$ and removes a disc from a region of $R_-$. Thus $\chi(R_+)$ increases by $1$ and $\chi(R_-)$ decreases by $1$, so $e(\Gamma)$ increases by $2$. The new curve has curvature $2\pi$, $\frac{1}{\pi} \sum k_i$ also increases by $2$. Similarly for a clockwise suture, both $e(\Gamma)$ and $\frac{1}{\pi} \sum k_i$ decrease by $2$. Any set of sutures on $(D,F)$ can be constructed from sutures without closed curves by repeatedly adding sutures in this way.
\end{proof}

In fact a more general result is possible; there is no necessity to restrict to a standard round metric with points of $F$ evenly spaced, but this is all we need.

\subsection{Generalised Euler class of string diagrams}
\label{sec:gen_Euler_class}

Consider now a general string diagram $s$ on an alternating disc $(D,F)$. Again consider $D$ as the unit disc in the Euclidean plane, with the $2n$ points of $F$ equally spaced around the unit circle; again require curves of $s$ to meet $\partial D$ at right angles. Let the curves of $s$ be $\sigma_1, \ldots, \sigma_M$; each $\sigma_i$ is oriented and has a total curvature $k_i$.

\begin{defn}
The \emph{generalised Euler class} $e(s)$ of $s$ is $\frac{1}{\pi} \sum_{i=1}^M k_i$.
\end{defn}

Obviously this generalises the Euler class of sutures. It is not difficult to see that the total curvature of $s$ is unchanged by
\begin{enumerate}
\item
ambient isotopy of $s$ on $\Sigma$ (always requiring $s$ to meet $\partial \Sigma$ at right angles);
\item
type II string Reidemeister moves;
\item
type III string Reidemeister moves;
\item
balanced type I Reidemeister moves;
\item
resolving a crossing.
\end{enumerate}

It follows that any string diagram obtained by successively resolving crossings and performing spin homotopies has the same total curvature as $s$. In particular we have proved the following.
\begin{prop}
Any set of sutures $\Gamma$ obtained by resolving crossings and performing spin homotopies of a string diagram $s$ satisfies $e(\Gamma) = e(s)$.
\qed
\end{prop}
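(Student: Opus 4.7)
The plan is to observe that the proposition is essentially a corollary of the five invariance statements listed immediately above its statement: any passage from $s$ to a set of sutures $\Gamma$ via resolving crossings and performing spin homotopies is, by unfolding the definitions of spin homotopy and of resolution, a finite sequence of ambient isotopies, type II and III string Reidemeister moves, balanced type I Reidemeister moves, and crossing resolutions. So the plan reduces to verifying that $e(\cdot) = \frac{1}{\pi}\sum_i k_i$ is invariant under each of these five operations.

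For ambient isotopies and Reidemeister II/III moves, the strategy is to note that each individual strand $\sigma_i$ undergoes a regular homotopy rel endpoints: the type II move can be realised by pushing one strand across another through a tangency, and the type III move by sliding a strand across a crossing, and in each case every strand individually remains an immersion throughout. Combined with the right-angle boundary condition (so that the tangent vectors at each point of $F$ are fixed), Whitney's theorem identifies regular-homotopy classes rel endpoints with turning numbers, forcing each $k_i$ to be preserved. The balanced type I move is a one-line calculation: the added positive whirl contributes $+2\pi$ to some $k_i$, the negative whirl contributes $-2\pi$ to another, and the total $\sum_i k_i$ is unchanged.

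The only step with any real content is invariance under crossing resolution, and this is the step where I expect to have to be a bit careful. The plan is to work in a small disc $U$ about a crossing $x$ and let $v_1^{in}, v_1^{out}$ and $v_2^{in}, v_2^{out}$ denote the oriented unit tangent vectors at which the two crossing strands enter and leave $\partial U$. The local contribution of these arcs to $\sum_i k_i$ is
\[
(\angle v_1^{out} - \angle v_1^{in}) + (\angle v_2^{out} - \angle v_2^{in}).
\]
After resolution, the orientation-compatible reconnection produces one arc running from $v_1^{in}$ to $v_2^{out}$ and another from $v_2^{in}$ to $v_1^{out}$; taking any smooth extension respecting these tangents, the new local contribution is
\[
(\angle v_2^{out} - \angle v_1^{in}) + (\angle v_1^{out} - \angle v_2^{in}),
\]
which is equal to the previous expression after cancellation. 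Since the rest of the diagram is unchanged, $\sum_i k_i$ is preserved.

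The main obstacle I anticipate is bookkeeping rather than substance: ensuring the orientation-determined pairing $v_1^{in}\!\to\!v_2^{out}$, $v_2^{in}\!\to\!v_1^{out}$ is the one actually produced by the resolution rule of Figure \ref{fig:resolving_crossing}, and that the smooth reconnection outside $U$ is compatible with the global total-curvature calculation. Once this is pinned down, and once Whitney's theorem is invoked for the isotopy/Reidemeister II/III cases, assembling the five invariances gives the proposition immediately.
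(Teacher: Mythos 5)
Your plan---reduce the claim to checking that total curvature is invariant under the five listed operations---is exactly the paper's, which simply asserts these invariances and deduces the proposition; your treatment of the ambient isotopy, Reidemeister II/III, and balanced Reidemeister I cases is correct. (The Whitney appeal only needs the easy direction: turning varies continuously under a regular homotopy rel endpoints and endpoint tangents, and lies in a discrete coset of $2\pi\Z$, hence is constant.)

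The crossing-resolution step as written would fail, however: each expression $\angle v^{\mathrm{out}} - \angle v^{\mathrm{in}}$ is only defined modulo $2\pi$, and ``any smooth extension respecting these tangents'' does not determine the actual turning of a resolved arc, since two such extensions can differ by an extra whirl, shifting the local contribution by $2\pi$ and hence the Euler class by $2$. What is needed is the \emph{specific} extension prescribed by the resolution rule of Figure \ref{fig:resolving_crossing}: two disjoint, non-winding arcs inside the small disc. Assuming, as the paper does, that the original strands are straight and meet at right angles, these arcs turn by exactly $+\pi/2$ and $-\pi/2$, contributing $0$ to $\sum_i k_i$. You anticipated a bookkeeping issue but located it in the pairing $v_1^{\mathrm{in}}\!\to\!v_2^{\mathrm{out}}$, $v_2^{\mathrm{in}}\!\to\!v_1^{\mathrm{out}}$, which is in fact correct; the real point is fixing the lift of each angle difference, not merely the combinatorics of the reconnection.
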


Resolving all the crossings of a string diagram $s$ on $(D,F)$ will result in a string diagram without crossings, which is not necessarily a set of sutures, but which is homotopic to a set of sutures.  For instance consider resolving the crossing in a ``whirl'' created from a type I Reidemeister move; it does not form a set of sutures, but the contractible loop can be homotoped across a string to give sutures. Note this statement is only true for discs.

Note that ``adding a whirl'' by a type I string Reidemeister move changes the Euler class by $\pm 1$ respectively as the whirl is anticlockwise or clockwise; a change of $+1$ and $-1$ cancel out in a balanced type I move.

Note also that, from this definition, it is clear that $\partial$ preserves $e$: resolving a crossing replaces an intersection (where curves can be assume straight and intersecting at right angles) with two curving segments, of curvature $-\pi/2$ and $\pi/2$, contributing $0$ to total curvature. It follows that the direct sum decomposition $CS^\infty(D^2,F) = \bigoplus_e CS_e^\infty (D^2,F)$ also gives a direct sum decomposition in homology, $HS^\infty = \bigoplus_e HS_e^\infty (D^2, F)$.

The above leads us to an alternative definition of spin homotopy.
\begin{prop}
\label{prop:spin_homotopy}
Two string diagrams on $(D,F)$ are spin homotopic if and only if they are homotopic and have the same generalised Euler class.
\end{prop}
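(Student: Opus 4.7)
I would prove the two directions separately.

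For the ($\Rightarrow$) direction: a spin homotopy is, by definition, a concatenation of ambient isotopies, type II string Reidemeister moves, type III string Reidemeister moves, and balanced type I Reidemeister moves, so in particular it is a homotopy. The list of invariance properties recorded immediately before the proposition asserts that each of these moves preserves total curvature $\sum_i k_i$, and hence the generalised Euler class $e = \frac{1}{\pi}\sum_i k_i$. Therefore $e(s_0) = e(s_1)$.

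For the ($\Leftarrow$) direction I would invoke the relative winding construction from section~\ref{sec:spin_homotopy}. Given homotopic string diagrams $s_0, s_1$, there is a unique integer $n$ (the relative winding of $s_1$ with respect to $s_0$) such that, after introducing $n$ anticlockwise whirls into $s_0$ --- interpreting $n<0$ as $|n|$ clockwise whirls --- the resulting diagram $s_0'$ is spin homotopic to $s_1$. Applying the forward direction to the spin homotopy between $s_0'$ and $s_1$ gives $e(s_0') = e(s_1)$. On the other hand, an anticlockwise whirl contributes an additional $+2\pi$ to $\sum_i k_i$ (the tangent makes one extra anticlockwise full turn) and a clockwise whirl contributes $-2\pi$, so $e(s_0') = e(s_0) + 2n$. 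Combining, $e(s_1) - e(s_0) = 2n$; the hypothesis $e(s_0) = e(s_1)$ then forces $n = 0$, so $s_0'$ is obtained from $s_0$ by no whirls at all, and $s_0$ is spin homotopic to $s_1$.

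The only nontrivial ingredient is the existence and uniqueness of the relative winding asserted in section~\ref{sec:spin_homotopy}. This is where I expect the real content to live: existence amounts to Whitney's classification of immersed arcs and circles up to regular homotopy (one can adjust the writhe of $s_0$ by adding whirls until it becomes regular homotopic to $s_1$), while uniqueness is the statement that spin homotopy cannot alter the total winding number except in cancelling pairs via balanced type I moves. Once that input is granted, the proposition reduces to Euler-class bookkeeping; the only additional local verification is that a single type I move changes $e$ by $\pm 2$, which is immediate from the definition since each whirl adds $\pm 2\pi$ to the total curvature.
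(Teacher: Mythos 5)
Your forward direction matches the paper's. The converse is where you diverge, and there is a circularity you should flag. The paper proves the converse directly from the Reidemeister-move decomposition of a homotopy from $s_0$ to $s_1$: only type~I moves change $e$, so $e(s_0)=e(s_1)$ forces the numbers of $e$-increasing and $e$-decreasing type~I moves to be equal; each ``delete a whirl'' move is then replaced by an ``add a whirl of the same sign'' move followed by a regular homotopy, and whirl-additions are pushed earlier in time and carried forward, so that all type~I moves occur in balanced pairs and the whole homotopy becomes a spin homotopy.

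Your argument instead invokes the relative-winding integer from section~\ref{sec:spin_homotopy}. But the existence and uniqueness of relative winding are \emph{equivalent} to this proposition (given the easy forward direction): from the proposition one checks that $n=\tfrac12\bigl(e(s_1)-e(s_0)\bigr)$ works and is unique, and from relative winding your bookkeeping yields the proposition. The paper never proves the relative-winding claim before this point --- it is precisely this proposition, with its Reidemeister-move argument, that establishes it, and the paper remarks immediately afterwards that relative winding is $e(s_1)-e(s_0)$. So citing relative winding here begs the question unless you give it an independent proof. You gesture at one via Whitney's theorem, and you are right that this is where the content lives, but the sketch underestimates the work: Whitney classifies a single immersed curve up to regular homotopy by turning number, whereas a string diagram has several components, and it is spin homotopy --- specifically balanced type~I moves --- that allows winding to be transferred between them. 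Showing that matching \emph{total} winding suffices, by redistributing component windings via balanced type~I moves and then applying Whitney componentwise while handling inter-component intersections, amounts to roughly the same work as the paper's direct argument. (One small point in your favour: your $\pm 2$ per whirl is correct; the paper's remark in this same subsection that a whirl changes $e$ by $\pm 1$ is a slip, contradicted by the definition of $e$, by the base case in section~\ref{sec:spin_base_case}, and by the $U$-map analysis.)
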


\begin{proof}
If string diagrams are spin homotopic then, by definition, they are related by ambient isotopies and type II, type III and balanced type I string Reidemeister moves. All these moves result in homotopic string diagrams with the same Euler class.

Conversely, if $s_0, s_1$ are homotopic and have the same Euler class, they are related by ambient isotopies and type I, II and III Reidemeister moves. However of these, only type I moves change Euler class. As $s_0, s_1$ have the same Euler class, the number of type I moves which increase and decrease Euler class must be equal. 

It remains to show that the homotopy between $s_0$ and $s_1$ can be achieved with positive and negative type I Reidemeister moves occur in pairs at the same time. There are $4$ variants of the type I move: the positive moves consist of adding an anticlockwise whirl or deleting a clockwise whirl; the negative moves consist of adding a clockwise whirl or deleting an anticlockwise whirl.

We note first that a  ``deletion of whirl'' type I move can be achieved by performing the ``addition of whirl'' type I move of the same sign, followed by a regular homotopy. So we can assume that all type I moves consist of additions of whirls; and hence the number of additions of anticlockwise whirls is equal to the number of additions of clockwise whirls. 

We also note that a whirl added at time $t$ in a homotopy, can be added at a time earlier than $t$, and then carried forward to time $t$. So type I moves can indeed be added in balanced pairs. Hence the homotopy can be achieved by using balanced type I Reidemeister moves, and $s_0, s_1$ are spin homotopic.
\end{proof}

We can now see that the \emph{relative winding} of a string diagram $s_1$ with respect to a homotopic string diagram $s_0$, defined in section \ref{sec:spin_homotopy}, is just $e(s_1) - e(s_0)$. And we can see that a homotopy class of string diagrams splits into a countable infinity of spin homotopy classes, indexed precisely by Euler class.

Note the effect of $U$ on $e$: adding two anticlockwise whirls adjusts $e$ by $4$, and adding two clockwise whirls adjusts $e$ by $-4$. So the maps $U, U^{-1}$ on $CS^\infty(D^2, F)$ restrict to the summands $CS^\infty_e(D^2, F)$ generated by string diagrams with fixed Euler class $e$ as
\[
U^{\pm 1} : CS^\infty_e (D^2, F) \To CS^\infty_{e \pm 4} (D^2, F).
\]
Since $U$ commutes with $\partial$ this also applies to homology:
\[
U^{\pm 1} : HS_e^\infty (D^2, F) \To HS_{e \pm 4}^\infty (D^2, F).
\]

\section{Creation and annihilation}
\label{sec:creation_annihilation}

Our proof proceeds by showing that various parts of the structure developed in \cite{Me09Paper} apply in the present situation. In particular, we will use \emph{creation and annihilation operators} similar to the operators defined in that paper.

We will only need creation and annihilation operators on discs, but there is no more difficulty in defining these operators on general surfaces. After giving the definition in general, we give our main proofs, which apply only to discs.

\subsection{Creation operators}

We define a \emph{creation operator} to take a string diagram on a sutured background $(\Sigma,F)$ and insert an outermost string, not intersecting any others, at a specified location on $\partial \Sigma$, with the new string in a specified orientation. 

More precisely, given an alternating marking $F$ on $\Sigma$, we consider an alternating marking $F'$ obtained from $F$ by adding two points $f_{in}, f_{out}$, lying in the same component of $\partial \Sigma \backslash F$, respectively labelled ``in'' and ``out''. We consider $CS^\infty$ and $\widehat{CS}$ cases separately, though the pictures are similar. As $F, F'$ are required to be alternating, there is precisely one way to insert $f_{in}, f_{out}$ between two consecutive points of $F$. (One can easily, however, define similar operators when $F$ is not alternating.)

The creation operator
\[
\bar{a}^*_{F,F'} \; : \; CS^\infty(\Sigma,F) \To CS^\infty(\Sigma,F')
\]
takes a spin homotopy class $\bar{s}$ of a string diagram $s$ on $(\Sigma,F)$, and inserts an extra string from $f_{in}$ to $f_{out}$, not intersecting itself or any other strands. The result $\bar{a}^*_{F,F'} \bar{s}$ is well defined up to spin homotopy. Being defined on a basis of $CS^\infty(\Sigma,F)$, $\bar{a}^*_{F,F}$ extends to a linear map on $CS^\infty$.

Similarly, the creation operator
\[
\widehat{a}^*_{F,F'} \; : \; \widehat{CS}(\Sigma,F) \To \widehat{CS}(\Sigma,F')
\]
takes a homotopy class $\widehat{s}$ of a string diagram $s$ without contractible loops on $(\Sigma,F)$, and inserts an extra string in the same way, giving a result $\widehat{a}^*_{F,F'} \widehat{s}$ without contractible loops and well defined up to homotopy. It extends to a linear map on $\widehat{CS}$. See figure \ref{fig:creation_operator}.

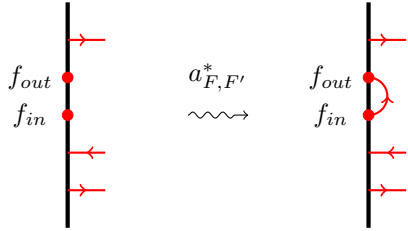
\begin{figure}[ht]
\begin{center}
\begin{tikzpicture}[
scale=1, 
string/.style={thick, draw=red, postaction={nomorepostaction, decorate, decoration={markings, mark=at position 0.5 with {\arrow{>}}}}},
boundary/.style={ultra thick}]

\draw [boundary] (0,0) -- (0,3);
\draw [string] (0,0.5) -- (0.5,0.5);
\draw [string] (0.5,1) -- (0,1);
\draw [string] (0,2.5) -- (0.5,2.5);
\draw (-0.5,1.5) node {$f_{in}$};
\draw (-0.5,2) node {$f_{out}$};
\fill [draw=red, fill=red] (0,1.5) circle (2pt);
\fill [draw=red, fill=red] (0,2) circle (2pt);

\draw [shorten >=1mm, -to, decorate, decoration={snake,amplitude=.4mm, segment length = 2mm, pre=moveto, pre length = 1mm, post length = 2mm}]
(1.5,1.5) -- (2.5,1.5);
\draw (2,2) node {$a_{F,F'}^*$};

\draw [boundary] (4,0) -- (4,3);
\draw [string] (4,0.5) -- (4.5,0.5);
\draw [string] (4.5,1) -- (4,1);
\draw [string] (4,1.5) arc (-90:90:0.25);
\draw [string] (4,2.5) -- (4.5,2.5);
\draw (3.5,1.5) node {$f_{in}$};
\draw (3.5,2) node {$f_{out}$};
\fill [draw=red, fill=red] (4,1.5) circle (2pt);
\fill [draw=red, fill=red] (4,2) circle (2pt);

\end{tikzpicture}
\caption{Creation operator.} \label{fig:creation_operator}
\end{center}
\end{figure}

Note that if $s_0, s_1$ are string diagrams on $(\Sigma,F)$ which are not spin homotopic, then inserting the extra string in a creation operator results in string diagrams which are not spin homotopic. It follows that any $\widehat{a}_{F,F'}^*$ is injective. Similarly we see that any $\bar{a}^*_{F,F'}$ is injective.

We next consider the effect of a creation operator followed by the differential, i.e. $\partial a^*_{F,F'}$. Again the pictures are similar in the $CS^\infty$ and $\widehat{CS}$ cases.

Given a spin homotopy class of string diagram $\bar{s} \in CS^\infty(\Sigma,F)$, we add an additional string to obtain $\bar{a}^*_{F,F'} \bar{s}$. Consider resolving (single) crossings in this string diagram. The resolutions are precisely those of $s$, but with the extra strand from $f_{in}$ to $f_{out}$ adjoined. Thus
\[
\partial \bar{a}^*_{F,F'} \bar{s} = \bar{a}^*_{F,F'} \partial \bar{s}.
\]
In fact, this is true at the level of ambient isotopy classes.

Similarly, given a regular homotopy class $\widehat{s}$ of a string diagram $s$ without contractible loops, resolving crossings in $\widehat{a}^*_{F,F'} \widehat{s}$ gives precisely the string diagrams of $\partial \widehat{s}$ with the extra strand adjoined; and any resolution which creates a contractible loop also created a contractible loop in $\partial \widehat{s}$. Thus
\[
\partial \widehat{a}^*_{F,F'} \widehat{s} = \widehat{a}^*_{F,F'} \partial \widehat{s}.
\]

Thus $\partial \bar{a}^*_{F,F'} = \bar{a}^*_{F,F'} \partial$ and $\partial \widehat{a}^*_{F,F'} = \widehat{a}^*_{F,F'} \partial$. We have proved the following.
\begin{lem}
The creation operators $\bar{a}^*_{F,F'}, \widehat{a}^*_{F,F'}$ are chain maps, hence define maps
\[
\bar{a}^*_{F,F'} \; : \; HS^\infty (\Sigma,F) \To HS^\infty (\Sigma,F'),
\quad
\widehat{a}^*_{F,F'} \; : \; \widehat{HS}(\Sigma,F) \To \widehat{HS}(\Sigma,F').
\]
\qed
\end{lem}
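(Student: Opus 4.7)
The plan is short, because the commutation $\partial \circ a^*_{F,F'} = a^*_{F,F'} \circ \partial$ (on either $CS^\infty$ or $\widehat{CS}$) has essentially already been sketched in the paragraphs preceding the lemma statement; once that commutation is in place, the induced maps on homology follow by standard homological algebra, since any chain map carries cycles to cycles and boundaries to boundaries, hence descends to $\ker \partial / \operatorname{im} \partial$.

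To make the commutation precise, I would fix a general-position representative $s$ of a spin-homotopy class $\bar s$ (respectively a homotopy class $\widehat s$ without contractible loops). The new arc $\alpha$ inserted by $a^*_{F,F'}$ runs from $f_{in}$ to $f_{out}$ in a small collar of $\partial \Sigma$ between two consecutive points of $F$; it is embedded and disjoint from $s$, so the crossings of $a^*_{F,F'}(s)$ are in natural bijection with the crossings of $s$. For each crossing $x$ of $s$, the resolution $r_x$ modifies $s$ only inside a small disc about $x$ that is disjoint from $\alpha$, so $r_x(a^*_{F,F'}(s))$ is ambient isotopic to $a^*_{F,F'}(r_x(s))$. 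Summing over $x$ gives $\partial a^*_{F,F'}(s) = a^*_{F,F'}(\partial s)$ as formal sums of string diagrams, hence at the level of the relevant equivalence classes after projecting to $CS^\infty(\Sigma,F')$.

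For the $\widehat{CS}$ version one needs the additional remark that adding or removing an outermost embedded arc disjoint from $s$ cannot render any closed curve appearing in a resolution contractible or non-contractible, so the quotient by diagrams with contractible loops is respected on both sides of the equality. The main potential obstacle is purely bookkeeping: one must check that the bijection between crossings and the local identifications of resolutions are compatible with the various homotopy equivalences used to define the two complexes. This compatibility is guaranteed by lemma \ref{lem:homotopic_diff} together with the well-definedness of $a^*_{F,F'}$ on the respective equivalence classes already noted in the text, so no real difficulty arises and the chain-map property, and hence the induced maps on homology, follow immediately.
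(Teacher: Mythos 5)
Your proof is correct and follows essentially the same approach the paper takes in the paragraphs preceding the lemma: the inserted arc is disjoint from the diagram, so resolutions of $a^*_{F,F'}(s)$ correspond exactly to resolutions of $s$ with the extra arc adjoined, giving $\partial a^*_{F,F'} = a^*_{F,F'}\partial$ at the level of ambient isotopy, with the additional observation for $\widehat{CS}$ that contractible loops arise on one side iff they arise on the other; passage to homology is then standard.
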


\subsection{Annihilation operators}

In a similar fashion we may define \emph{annihilation} operators. An annihilation operator takes a string diagram and ``closes off'' two consecutive points of $F$, as shown in figure \ref{fig:annihilation_operator}, to give a string diagram with fewer arcs (and maybe a new closed curve).

More precisely, given an alternating marking $F$ on $\Sigma$, consider a marking $F'$ obtained from $F$ by \emph{removing} two consecutive points $f_{in}, f_{out}$ of $F$, respectively labelled ``in'' and ``out''. Note that for $F'$ to be a valid marking there must be at least $4$ points of $F$ on the boundary component of $f_{in}$ and $f_{out}$. Also note that $F'$ is necessarily alternating; although one could easily define a similar operation on a non-alternating $F$, whenever two consecutive points of $F$ have opposite directions. Again we consider $CS^\infty$ and $\widehat{CS}$ cases separately but pictures are similar.

The annihilation operator
\[
\bar{a}_{F,F'} \; : \; CS^\infty(\Sigma,F) \To CS^\infty(\Sigma,F')
\]
takes a spin homotopy class $\bar{s}$ of a string diagram $s$ on $(\Sigma,F)$ and joins the strings previously ending at $f_{in}, f_{out}$, without introducing any new intersections of strings. The result $\bar{a}_{F,F'}$ is well defined up to spin homotopy and we linearly extend to define $\bar{a}_{F,F'}$ on $CS^\infty(\Sigma,F)$. Note that if the strings ending at $f_{in}, f_{out}$ in $s$ are distinct then $\bar{a}_{F,F'} \bar{s}$ has one fewer arc component that $s$; while if a single string has endpoints at $f_{in},f_{out}$ then $\bar{a}_{F,F'} \bar{s}$ has two fewer arc components than $s$ but one more closed curve component.

Similarly, the annihilation operator
\[
\widehat{a}_{F,F'} \; : \; \widehat{CS}(\Sigma,F) \To \widehat{CS}(\Sigma,F')
\]
takes a homotopy class $\widehat{s}$ of a string diagram $s$ without contractible loops on $(\Sigma,F)$, and joins strings in the same way, giving a result well-defined up to homotopy. If joining the strings results in a contractible loop then we regard the result as zero in $\widehat{CS}(\Sigma,F')$. We extend linearly to define $\widehat{a}_{F,F'}$ on $\widehat{CS}(\Sigma,F)$. See figure \ref{fig:annihilation_operator}.

\begin{figure}[ht]
\begin{center}
\begin{tikzpicture}[
scale=1, 
string/.style={thick, draw=red, postaction={nomorepostaction, decorate, decoration={markings, mark=at position 0.5 with {\arrow{>}}}}},
boundary/.style={ultra thick}]

\draw [boundary] (0,0) -- (0,3);
\draw [string] (0,0.5) -- (0.5,0.5);
\draw [string] (0.5,1) -- (0,1);
\draw [string] (0,2.5) -- (0.5,2.5);
\draw (-0.5,1.5) node {$f_{in}$};
\draw (-0.5,2) node {$f_{out}$};
\fill [draw=red, fill=red] (0,1.5) circle (2pt);
\fill [draw=red, fill=red] (0,2) circle (2pt);
\draw [string] (0,1.5) -- (1,1.5);
\draw [string] (1,2) -- (0,2);

\draw [shorten >=1mm, -to, decorate, decoration={snake,amplitude=.4mm, segment length = 2mm, pre=moveto, pre length = 1mm, post length = 2mm}]
(1.5,1.5) -- (2.5,1.5);
\draw (2,2) node {$a_{F,F'}$};

\draw [boundary] (4,0) -- (4,3);
\draw [string] (4,0.5) -- (4.5,0.5);
\draw [string] (4.5,1) -- (4,1);
\draw [string] (5,2) -- (4.5,2) arc (90:270:0.25) -- (5,1.5);
\draw [string] (4,2.5) -- (4.5,2.5);
\draw (3.5,1.5) node {$f_{in}$};
\draw (3.5,2) node {$f_{out}$};
\fill [draw=red, fill=red] (4,1.5) circle (2pt);
\fill [draw=red, fill=red] (4,2) circle (2pt);

\end{tikzpicture}
\caption{Annihilation operator.} \label{fig:annihilation_operator}
\end{center}
\end{figure}
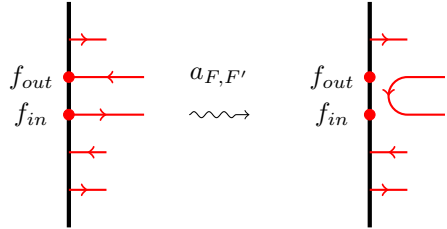

Consider the effect of an annihilation operator fillowed by the differential, i.e. $\partial a_{F,F'}$. As the annihilation operation introduces no new crossings, it commutes with the differential, in both the $CS^\infty$ and $\widehat{CS}$ cases, similarly to creation operators.
\begin{lem}
The annihilation operators $\bar{a}_{F,F'}$, $\widehat{a}_{F,F'}$ are chain maps, hence define maps
\[
\bar{a}_{F,F'} \; : \; HS^\infty (\Sigma,F) \To HS^\infty(\Sigma,F'),
\quad
\widehat{a}_{F,F'} \; : \; \widehat{HS}(\Sigma,F) \To \widehat{HS}(\Sigma,F').
\]
\qed
\end{lem}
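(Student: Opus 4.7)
The plan is to show the stronger identity that $\partial \circ a_{F,F'} = a_{F,F'} \circ \partial$ holds at the chain level for both $\bar{a}_{F,F'}$ and $\widehat{a}_{F,F'}$; the assertion that these operators induce well-defined maps on $HS^\infty$ and $\widehat{HS}$ is then automatic, since a chain map sends cycles to cycles and boundaries to boundaries.

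The first step is to set up a canonical bijection between the crossings of $s$ and those of $a_{F,F'}(s)$. Since $a_{F,F'}$ modifies a representative diagram only in a small collar of the arc of $\partial\Sigma$ joining $f_{in}$ and $f_{out}$, we may choose the joining arc to miss all existing self-intersections and introduce no new double points. Thus each crossing $x$ of $s$ has a well-defined image $\tilde{x}$ in $a_{F,F'}(s)$, and these are all of the crossings of the new diagram.

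The second step is to commute resolution with annihilation crossing by crossing. Each resolution $r_x$ alters $s$ only within a small disc around $x$ in the interior of $\Sigma$, which is disjoint from the boundary collar in which $a_{F,F'}$ is performed. Consequently the two local modifications are independent, giving $a_{F,F'}(r_x(s)) = r_{\tilde{x}}(a_{F,F'}(s))$ up to ambient isotopy. Summing over $x$ yields $\partial a_{F,F'}(s) = a_{F,F'}(\partial s)$ as an equality of formal sums of ambient-isotopy classes of string diagrams, and in the $CS^\infty$ case this descends immediately to spin homotopy since ambient isotopy is finer.

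The main point requiring care is the contractible-loop convention in the $\widehat{CS}$ case. There are two scenarios to check: a resolution $r_x$ of $s$ may create a contractible loop, and the annihilation $a_{F,F'}$ may create a contractible loop when it closes off a single arc. In the first scenario, the contractible loop produced by $r_x$ is created in a disc disjoint from the collar, so it is still contractible in $a_{F,F'}(r_x(s))$; thus a term vanishing on the $a_{F,F'}\partial$ side corresponds to a term vanishing on the $\partial a_{F,F'}$ side, and vice versa. In the second scenario, if the joining itself creates a contractible loop, this loop persists after any subsequent resolution $r_{\tilde{x}}$ (which happens in a disjoint region of $\Sigma$), so every term on the $\partial a_{F,F'}$ side is zero, and likewise the joined curve in each $a_{F,F'}(r_x(s))$ is still contractible, so every term on $a_{F,F'}\partial$ side is also zero. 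The identity $\partial \widehat{a}_{F,F'} = \widehat{a}_{F,F'} \partial$ therefore holds in $\widehat{CS}$. This contractible-loop bookkeeping is the only nontrivial point; once handled, both annihilation operators are chain maps and descend to the asserted homology-level maps.
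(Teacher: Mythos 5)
Your handling of $CS^\infty$, and of ``scenario 1'' in $\widehat{CS}$ (a resolution producing a contractible closed curve away from the joining region), is correct and is the same route the paper takes. The gap is in your ``scenario 2''. When joining closes a single arc $\alpha$ (the strand from $f_{in}$ to $f_{out}$) into a closed curve $L$ that happens to be contractible, you assert that $L$ ``persists after any subsequent resolution $r_{\tilde{x}}$ (which happens in a disjoint region of $\Sigma$)'' and that ``the joined curve in each $a_{F,F'}(r_x(s))$ is still contractible.'' Neither is true in general: the crossing $\tilde{x}$ can lie on $L$ itself --- either a self-crossing of $\alpha$ or a crossing between $\alpha$ and another strand --- in which case the resolution splits or absorbs $L$. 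Worse, if $x$ is a crossing between $\alpha$ and another arc $\beta'$, then $r_x(s)$ reroutes the strands so the ends at $f_{in}$ and $f_{out}$ lie on distinct arcs, the joining produces an arc rather than a closed curve, and $\widehat{a}_{F,F'}(r_x(s))$ is a genuinely nonzero element of $\widehat{CS}$. Concretely, take $(D^2,F)$ alternating with $F=\{f_{in},f_{out},g_{in},g_{out}\}$ and $s=\alpha\sqcup\beta'$ with $\alpha:f_{in}\to f_{out}$ and $\beta':g_{in}\to g_{out}$ crossing twice at $x_1,x_2$. Then $\widehat{a}_{F,F'}(s)=0$ (the joined $\alpha$ is a contractible circle), but each $\widehat{a}_{F,F'}(r_{x_i}(s))$ is the nonzero homotopy class of an arc $g_{in}\to g_{out}$ with one removable self-crossing. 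The chain-map identity survives because those two terms cancel mod $2$, not because each is zero as your argument claims.

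So the repair in scenario 2 is not a term-by-term vanishing but a cancellation of the whole sum $\sum_x \widehat{a}_{F,F'}(r_x(s))$. One clean route: first show that ``resolve all crossings, then project to $\widehat{CS}$ by killing contractible loops'' is well defined on \emph{homotopy} classes (the well-definition lemmas for $\widehat{CS}$, together with the observation that an unbalanced type~I move only changes the resolved sum by a term containing a contractible loop, which dies under the projection). Then, since $a_{F,F'}(s)$ contains a contractible closed curve, it is homotopic --- not spin-homotopic --- to a diagram in which that curve is a tiny embedded circle disjoint from everything else, for which every resolved term still contains the tiny circle and hence projects to $0$. Equivalently, verify that $p:CS^\infty\to\widehat{CS}$ is a chain map and push the already-established $CS^\infty$ identity $\partial\bar{a}_{F,F'}=\bar{a}_{F,F'}\partial$ through $p$. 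It is worth noting that the paper's own one-line proof, which appeals to the creation-operator argument, also glosses over this point; creation is strictly easier because the created strand is an arc and can never become a contractible closed curve, so no analogue of your scenario 2 arises there.
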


\section{Homology computation for discs}
\label{sec:homology_computation}

We now compute $\widehat{HS}(D^2, F)$ when $F$ is an alternating marking on $D^2$, proving theorem \ref{thm:main_thm}, and hence theorem \ref{thm:rough_main_thm2}. Let $|F|=2n$; for convenience we will write $F_n$ for the alternating marking on the disc with $2n$ points. The proof will be by induction on $n$.

\subsection{Base case}

\begin{lem}
\label{lem:base_case}
$\widehat{HS}(D^2,F_1) \cong \Z_2$. The single summand lies in intersection grading $0$ and is generated by the string diagram consisting of a single arc.
\end{lem}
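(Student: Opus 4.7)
The plan is to directly enumerate the generators of $\widehat{CS}(D^2, F_1)$ and observe that the differential is forced to vanish. A string diagram on $(D^2, F_1)$ is an immersed oriented $1$-manifold whose boundary is the single in-point together with the single out-point, so it consists of exactly one arc between these two points together with some (possibly empty) collection of immersed closed curves.

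First I would note that since $D^2$ is simply connected, every closed curve in $D^2$ is contractible. Consequently, any string diagram on $(D^2, F_1)$ that contains a closed curve component is killed in $\widehat{CS}(D^2, F_1)$ by the defining restriction, and the surviving generators are exactly homotopy classes of single arcs from $f_{in}$ to $f_{out}$. Because $D^2$ is contractible, any two arcs with the same endpoints are homotopic rel endpoints; so there is precisely one homotopy class of such arcs. Thus $\widehat{CS}(D^2, F_1) \cong \Z_2$, generated by the class $[\alpha]$ of the straight-line chord $\alpha$ joining the two points of $F_1$.

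Next I would check gradings and the differential. The chord $\alpha$ has no self-intersections, so it is a minimal-intersection representative of its homotopy class and $\widehat{I}([\alpha]) = 0$; that is, the unique generator lies in $\widehat{CS}_0(D^2, F_1)$. Since $\alpha$ has no crossings, applying the differential formula $\partial([\alpha]) = \sum_x r_x(\alpha)$ gives an empty sum, hence $\partial = 0$ on $\widehat{CS}(D^2, F_1)$. (Equivalently, by the filtration lemma, $\partial$ would have to land in strictly negative intersection grading, which is empty.)

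Combining these observations, $\widehat{HS}(D^2, F_1) = \ker \partial / \operatorname{IIm} \partial = \widehat{CS}(D^2, F_1) \cong \Z_2$, with the sole class represented by $[\alpha]$ and carried in intersection grading $0$, as claimed. There is essentially no obstacle in this base case; the only point that deserves care is the simple-connectivity argument forcing all closed-curve components to be contractible, so that setting such diagrams to zero really does leave only the single-arc generator.
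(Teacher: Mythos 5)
Your proof is correct and follows essentially the same route as the paper's: you exclude closed-curve components using simple-connectivity (which the paper leaves implicit), note that contractibility of $D^2$ gives a unique homotopy class of arc, and conclude that $\widehat{CS}(D^2,F_1)\cong\Z_2$ with vanishing differential. The only difference is that you spell out the simple-connectivity and contractibility arguments explicitly, which the paper compresses into one sentence.
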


\begin{proof}
As $\widehat{CS}$ only considers string diagrams without contractible loops, any string diagram in $\widehat{CS}(D^2, F_1)$ consists of a single arc joining the two points of $F_1$. As $\widehat{CS}$ considers string diagrams up to homotopy, such a string diagram is equivalent to a single properly embedded arc. Hence $\widehat{CS}(D^2, F_1)$ is spanned over $\Z_2$ by this single homotopy class of diagram, which has intersection number $0$. We have $\partial = 0$, so $\widehat{HS}$ is as claimed.
\end{proof}

Following \cite{Me09Paper}, we denote the nonzero element of $\widehat{HS}(D^2,F_1)$ as $v_\emptyset$ and call it the \emph{vacuum}. 

\subsection{Building a basis}
\label{sec:building_a_basis}

On each $(D^2, F_n)$ we will select once and for all a basepoint in $F_n$, labelled ``in''. We can then consider two specific creation operators on $(D^2, F_n)$, creating new strands in the two sites adjacent to the basepoint; and two specific annihilation operators, annihilating at the two sites which include the basepoint. After creating or annihilating at these sites, the basepoints are positioned as shown in figure \ref{fig:a_and_a_star}.

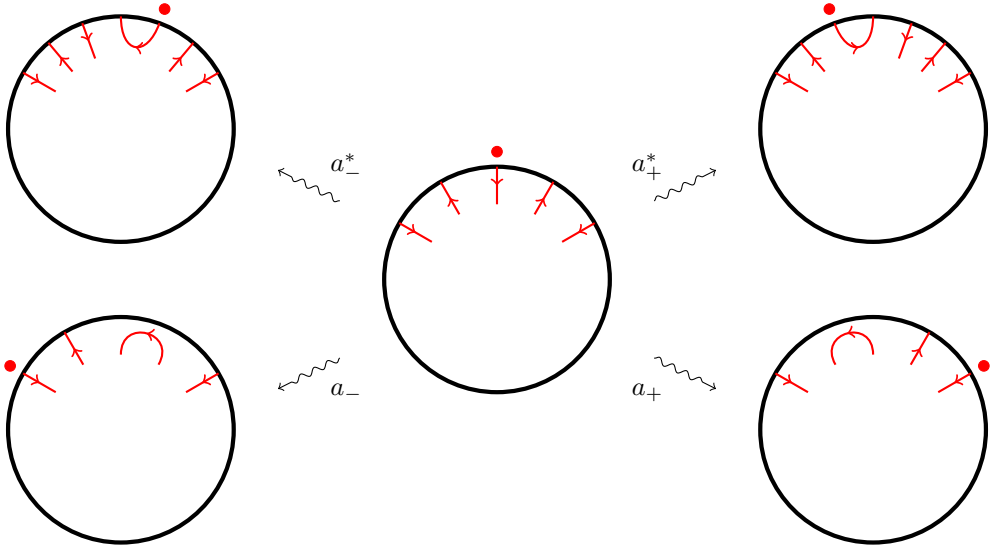
\begin{figure}[ht]
\begin{center}
\begin{tikzpicture}[
scale=1, 
string/.style={thick, draw=red, postaction={nomorepostaction, decorate, decoration={markings, mark=at position 0.5 with {\arrow{>}}}}},
boundary/.style={ultra thick}]

\draw [boundary] (0,0) circle (1.5 cm);
\draw [string] (30:1.5) -- (30:1);
\draw [string] (60:1) -- (60:1.5);
\draw [string] (90:1.5) -- (90:1);
\draw [string] (120:1) -- (120:1.5);
\draw [string] (150:1.5) -- (150:1);
\fill [draw=red, fill=red] (90:1.7) circle (2pt);

\draw [shorten >=1mm, -to, decorate, decoration={snake,amplitude=.4mm, segment length = 2mm, pre=moveto, pre length = 1mm, post length = 2mm}]
(2,1) -- (3,1.5);
\draw (2,1.5) node {$a_+^*$};

\draw [xshift = 5 cm, yshift = 2 cm, boundary] (0,0) circle (1.5 cm);
\draw [xshift = 5 cm, yshift = 2 cm, string] (30:1.5) -- (30:1);
\draw [xshift = 5 cm, yshift = 2 cm, string] (50:1) -- (50:1.5);
\draw [xshift = 5 cm, yshift = 2 cm, string] (70:1.5) -- (70:1);
\draw [xshift = 5 cm, yshift = 2 cm, string] (110:1.5) .. controls (110:1) and (90:1) .. (90:1.5);
\draw [xshift = 5 cm, yshift = 2 cm, string] (130:1) -- (130:1.5);
\draw [xshift = 5 cm, yshift = 2 cm, string] (150:1.5) -- (150:1);
\fill [xshift = 5 cm, yshift = 2 cm, draw=red, fill=red] (110:1.7) circle (2pt);

\draw [shorten >=1mm, -to, decorate, decoration={snake,amplitude=.4mm, segment length = 2mm, pre=moveto, pre length = 1mm, post length = 2mm}]
(2,-1) -- (3,-1.5);
\draw (2,-1.5) node {$a_+$};

\draw [xshift = 5 cm, yshift = -2 cm, boundary] (0,0) circle (1.5 cm);
\draw [xshift = 5 cm, yshift = -2 cm, string] (30:1.5) -- (30:1);
\draw [xshift = 5 cm, yshift = -2 cm, string] (60:1) -- (60:1.5);
\draw [xshift = 5 cm, yshift = -2 cm, string] (90:1) .. controls (90:1.5) and (120:1.5) .. (120:1);
\draw [xshift = 5 cm, yshift = -2 cm, string] (150:1.5) -- (150:1);
\fill [xshift = 5 cm, yshift = -2 cm, draw=red, fill=red] (30:1.7) circle (2pt);

\draw [shorten >=1mm, -to, decorate, decoration={snake,amplitude=.4mm, segment length = 2mm, pre=moveto, pre length = 1mm, post length = 2mm}]
(-2,1) -- (-3,1.5);
\draw (-2,1.5) node {$a_-^*$};

\draw [xshift = -5 cm, yshift = 2 cm, boundary] (0,0) circle (1.5 cm);
\draw [xshift = -5 cm, yshift = 2 cm, string] (30:1.5) -- (30:1);
\draw [xshift = -5 cm, yshift = 2 cm, string] (50:1) -- (50:1.5);
\draw [xshift = -5 cm, yshift = 2 cm, string] (70:1.5) .. controls (70:1) and (90:1) .. (90:1.5);
\draw [xshift = -5 cm, yshift = 2 cm, string] (110:1.5) -- (110:1);
\draw [xshift = -5 cm, yshift = 2 cm, string] (130:1) -- (130:1.5);
\draw [xshift = -5 cm, yshift = 2 cm, string] (150:1.5) -- (150:1);
\fill [xshift = -5 cm, yshift = 2 cm, draw=red, fill=red] (70:1.7) circle (2pt);

\draw [shorten >=1mm, -to, decorate, decoration={snake,amplitude=.4mm, segment length = 2mm, pre=moveto, pre length = 1mm, post length = 2mm}]
(-2,-1) -- (-3,-1.5);
\draw (-2,-1.5) node {$a_-$};

\draw [xshift = -5 cm, yshift = -2 cm, boundary] (0,0) circle (1.5 cm);
\draw [xshift = -5 cm, yshift = -2 cm, string] (30:1.5) -- (30:1);
\draw [xshift = -5 cm, yshift = -2 cm, string] (60:1) .. controls (60:1.5) and (90:1.5) .. (90:1);
\draw [xshift = -5 cm, yshift = -2 cm, string] (120:1) -- (120:1.5);
\draw [xshift = -5 cm, yshift = -2 cm, string] (150:1.5) -- (150:1);
\fill [xshift = -5 cm, yshift = -2 cm, draw=red, fill=red] (150:1.7) circle (2pt);

\end{tikzpicture}
\caption{Creation and annihilation operators $a^*_\pm, a_\pm$. Basepoints are denoted by a dot.} \label{fig:a_and_a_star}
\end{center}
\end{figure}

Thus we obtain on each $\widehat{CS}(D^2,F_n)$ two annihilation operators $a_\pm$ and two creation operators $a^*_\pm$; here we follow the notation of \cite{Me10_Sutured_TQFT}, which is different from that of \cite{Me09Paper} (where they were called $A_\pm, B_\pm$). Being chain maps, these operators descend to homology.
\[
a_\pm \; : \; \widehat{HS}(D^2, F_n) \To \widehat{HS}(D^2, F_{n-1}),
\quad
a^*_\pm \; : \; \widehat{HS}(D^2, F_n) \To \widehat{HS} (D^2, F_{n+1}).
\]
These operators satisfy the relations
\[
a_- a^*_- = a_+ a^*_+ = 1, \quad a_- a^*_+ = a_+ a^*_- = 0.
\]
In particular, each creation $a^*_\pm$ is injective, with partial inverse $a_\pm$.

Still following \cite{Me09Paper}, for any word $w$ of length $n$ on the symbols $\{-,+\}$, we may compose the corresponding creation operators to obtain a creation operator $a^*_w$. Then we define $v_w = a^*_w v_\emptyset \in \widehat{HS}(D^2, F_{n+1})$; this generalises the notation of $v_\emptyset$ by regarding $\emptyset$ as the empty word, of length $0$. As there are $2^n$ words of length $n$ on $\{-,+\}$, we obtain $2^n$ distinguished elements (although we do not yet know they are distinct) in each $\widehat{HS}(D^2, F_{n+1})$. These diagrams are described at length in \cite{Me09Paper}. We will show below that they form a basis.

\begin{lem}
\label{lem:linearly_independent}
The $2^n$ elements of the form $v_w$ in $\widehat{HS}(D^2, F_{n+1})$ are linearly independent.
\end{lem}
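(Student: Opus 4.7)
The plan is to prove linear independence by induction on $n$, making essential use of the four operator identities
\[
a_- a^*_- \;=\; a_+ a^*_+ \;=\; 1, \qquad a_- a^*_+ \;=\; a_+ a^*_- \;=\; 0
\]
on $\widehat{HS}$ recorded just above the statement, together with the fact that the annihilation operators $a_\pm$ are chain maps and so descend to homology. The base case $n=0$ is exactly Lemma \ref{lem:base_case}: the single element $v_\emptyset$ generates $\widehat{HS}(D^2, F_1) \cong \Z_2$ and is nonzero.

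For the inductive step, assume the $2^{n-1}$ elements $\{v_{w'} : |w'|=n-1\}$ are linearly independent in $\widehat{HS}(D^2, F_n)$. Suppose a relation
\[
\sum_{|w| = n} c_w \, v_w \;=\; 0 \quad \text{in } \widehat{HS}(D^2, F_{n+1})
\]
holds. Partition the words $w$ of length $n$ by their outermost letter $\epsilon \in \{-, +\}$, writing $w = \epsilon w'$ with $|w'| = n-1$, so that $v_w = a^*_\epsilon v_{w'}$. Applying the chain map $a_+$ to both sides and using $a_+ a^*_+ = 1$ together with $a_+ a^*_- = 0$, we obtain
\[
\sum_{|w'| = n-1} c_{+w'} \, v_{w'} \;=\; 0 \quad \text{in } \widehat{HS}(D^2, F_n).
\]
The inductive hypothesis then forces every $c_{+w'}$ to vanish. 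Applying $a_-$ in place of $a_+$ and using the complementary identities $a_- a^*_- = 1$, $a_- a^*_+ = 0$ forces every $c_{-w'}$ to vanish, so all $c_w = 0$.

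The whole argument is essentially formal once the four operator identities are available; those are the only nontrivial input, and they have already been stated. So I do not expect any real obstacle. The only thing worth double-checking is the convention for how letters of $w$ are fed into $a^*_w$: the induction as written peels off the outermost (leftmost) letter, but either ``outermost'' or ``innermost'' convention would work equally well, provided it is applied consistently. A small side benefit of this approach is that it also proves, for free, that the partial inverse relations make each $a^*_\pm$ injective on homology, recovering the injectivity statement quoted above.
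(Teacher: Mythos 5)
Your proof is correct and uses the same essential mechanism as the paper's: project onto a chosen basis vector by applying annihilation operators, using the identities $a_\pm a^*_\pm = 1$, $a_\mp a^*_\pm = 0$. The paper applies the full composite sequence of annihilations for each word $w_i$ at once to reduce to $v_\emptyset = 0$, whereas you peel off one letter at a time via induction; this is just a reorganization of the same argument, and your remark about the word-ordering convention is consistent with the paper's usage $a^*_\epsilon v_{w'} = v_{\epsilon w'}$.
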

(In particular, the $v_w$ are distinct!)

\begin{proof}
This proof appears in \cite{Me09Paper}. Suppose some nontrivial linear combination $\sum_i v_{w_i} = 0$, where the $w_i$ are distinct words of length $n$. For each word $w_i$, there is a sequence of annihilation operators which undo the creation operators used to create $v_{w_i}$; this sequence of annihilation operators sends $v_{w_i} \mapsto v_\emptyset$ but annihilates every other $v_{w_i}$ to $0$. Applying this annihilation operator to $\sum_i v_{w_i} = 0$ then gives $v_\emptyset = 0$, a contradiction.
\end{proof}

\subsection{The crossed wires lemma}

The following lemma is the technical key to the present computation. It applies more generally than to discs, and so we state it generally. It only requires \emph{regular} homotopy, and it works whether or not we disregard contractible loops. We will state it for the more general case of $CS^\infty$, and then for the case immediately at hand, of $\widehat{CS}$.

The lemma applies in a situation where we have two creation operators $a_\pm^*$ which insert strings at adjacent sites, from an alternating marked surface $(\Sigma,F)$ to $(\Sigma,F')$; $F'$ is obtained from $F$ by adding two adjacent points. The strings created by $a_\pm^*$ have endpoints which together form $3$ consecutive points of $F'$: let them be $f_{-1}, f_0, f_1$ in order around $\partial \Sigma$. This generalises the operators above in figure \ref{fig:a_and_a_star}.

\begin{lem}[Crossed wires lemma]
\label{lem:decomposition}
Let $\Sigma,F, F'$ and
\[
a_\pm^* \; : \; CS^\infty(\Sigma,F) \To CS^\infty(\Sigma,F')
\]
be as above. Suppose $x \in CS^\infty(\Sigma, F')$ satisfies $\partial x = 0$. Then there exist $y,z \in CS^\infty(\Sigma,F)$ and $u \in CS^\infty(\Sigma,F')$ such that
\[
\partial y = \partial z = 0
\quad \text{and} \quad
x = a_-^* y + a_+^* z + \partial u.
\]
\end{lem}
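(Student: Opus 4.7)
The plan is to construct the decomposition at the chain level, diagram by diagram, using the ``triangle boundary'' identity of Figure \ref{fig:fundamental_boundary} localised near the three consecutive endpoints $f_{-1}, f_0, f_1$. Fix once and for all a small disc-neighbourhood $N$ of the arc of $\partial\Sigma$ containing $f_{-1}, f_0, f_1$ and no other marked points. Given a string diagram $s$ representing a generator of $CS^\infty(\Sigma, F')$, ambient isotope $s$ so that $s \cap N$ consists of three radial segments running from $f_{-1}, f_0, f_1$ straight into the interior (the \emph{straight configuration}); this is always achievable within the same spin homotopy class.

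For $s$ in straight configuration, define $t_s \in CS^\infty(\Sigma, F')$ by replacing the three radial segments inside $N$ by three segments arranged in the triangle pattern on the left of Figure \ref{fig:fundamental_boundary}. This introduces three new crossings, all inside $N$, while leaving $s$ unchanged outside $N$. The crossings of $t_s$ therefore split into these three new crossings together with the old crossings of $s$ (all outside $N$). Resolving the three new crossings produces exactly the bypass triple: the original $s$, a diagram $s_-$ in which $f_{-1}$ and $f_0$ are joined by a small cap inside $N$, and a diagram $s_+$ in which $f_0$ and $f_1$ are joined by a small cap. Resolving the old crossings produces $T(\partial s)$, where $T$ denotes the well-defined linear ``insert triangle'' operation on the subspace of straight-configuration diagrams. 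Hence
\[
\partial t_s \;=\; s + s_- + s_+ + T(\partial s).
\]
By construction $s_- = a_-^* \tilde s_-$ and $s_+ = a_+^* \tilde s_+$ for unique $\tilde s_\pm \in CS^\infty(\Sigma, F)$, obtained by deleting the respective small cap and its pair of endpoints. Now write the given cycle as $x = \sum_i s_i$ with each $s_i$ in straight configuration, and set $y = \sum_i \tilde s_{i,-}$, $z = \sum_i \tilde s_{i,+}$, $u = \sum_i t_{s_i}$. Summing the displayed identity gives $\partial u = x + a_-^* y + a_+^* z + T(\partial x)$; but $\partial x = 0$ implies $T(\partial x) = 0$, and over $\Z_2$ this rearranges to $x = a_-^* y + a_+^* z + \partial u$.

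Finally we must verify $\partial y = \partial z = 0$. Applying $\partial$ to this equation and using $\partial^2 = 0$ together with the chain map property of $a_\pm^*$ from Section \ref{sec:creation_annihilation} yields $a_-^*(\partial y) + a_+^*(\partial z) = 0$. The key observation is that the images of $a_-^*$ and $a_+^*$ inside $CS^\infty(\Sigma, F')$ are linearly disjoint: a generator in the image of $a_-^*$ is represented by a diagram containing a component with endpoints $\{f_{-1}, f_0\}$, whereas a generator in the image of $a_+^*$ has a component with endpoints $\{f_0, f_1\}$, and no single diagram can have both since $f_0$ is an endpoint of at most one component. These endpoint pairings are invariants of the spin homotopy class, so the two images meet trivially in $CS^\infty(\Sigma, F')$. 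Consequently $a_-^*(\partial y) = 0 = a_+^*(\partial z)$, and injectivity of the creation operators (Section \ref{sec:creation_annihilation}) forces $\partial y = \partial z = 0$. The main subtlety to check carefully in the full proof is the well-definedness of $T$ on spin homotopy classes of straight-configuration diagrams --- namely, that any spin homotopy between two such representatives can be arranged to fix a small boundary collar containing $N$ so as to commute with triangle insertion --- together with verifying that the three triangle resolutions match precisely the three bypass configurations $s, s_-, s_+$ of Figure \ref{fig:fundamental_boundary}.
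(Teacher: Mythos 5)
Your proof is correct and takes essentially the same route as the paper's. Your ``insert triangle'' operation $T$ on straight-configuration diagrams is precisely the paper's ``crossed wires'' operation $B$ (figure \ref{fig:operation_B}); the chain-level identity $\partial t_s = s + s_- + s_+ + T(\partial s)$ is the paper's $\partial \bar{B}\bar{s}_i = \bar{s}_i + a_-^*\bar{y}_i + a_+^*\bar{z}_i + \bar{B}\partial\bar{s}_i$; and the closing step (that $a_-^*(\partial y) = a_+^*(\partial z)$ forces both to vanish, by distinguishing the endpoint-pairing at $f_0$, then invoking injectivity of $a_\pm^*$) matches the paper's argument via the non-intersecting arc at $f_0$. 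You correctly flag the one point that needs care, namely well-definedness of $T$ on spin homotopy classes of straight-configuration diagrams, which the paper also addresses (somewhat briefly) when introducing $B$.
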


To prove this lemma, we will need a certain switching operation $B$ on string diagrams on $(\Sigma, F)$, which switches $f_{-1}$ and $f_1$, so ``crosses 3 wires''; hence the name of the lemma. More precisely, given a string diagram $s$ on $(\Sigma,F)$, we make a local modification near the arc of $\partial \Sigma$ connecting $f_{-1},f_0,f_1$. The arc of $s$ which ran to $f_1$, we now reroute to $f_{-1}$, and vice versa, as shown in figure \ref{fig:operation_B}. This introduces three new crossings in $s$: a crossing between the two rerouted strands, and a crossing between the arc emanating from $f_0$ with each of the two rerouted arcs. These are our ``crossed wires''.

\begin{figure}[ht]
\begin{center}
\begin{tikzpicture}[
scale=1, 
string/.style={thick, draw=red, postaction={nomorepostaction, decorate, decoration={markings, mark=at position 0.5 with {\arrow{>}}}}},
string2/.style={thick, draw=red, postaction={nomorepostaction, decorate, decoration={markings, mark=at position 0.8 with {\arrow{>}}}}},
boundary/.style={ultra thick}]

\draw [boundary] (0,0) -- (0,3);
\draw [string] (0,0.5) -- (0.5,0.5);
\draw [string] (1,1) -- (0,1);
\fill [draw=red, fill=red] (0,1) circle (2pt);
\draw (-0.5,1) node {$f_{-1}$};
\draw [string] (0,1.5) -- (1,1.5);
\fill [draw=red, fill=red] (0,1.5) circle (2pt);
\draw (-0.5,1.5) node {$f_0$};
\draw [string] (1,2) -- (0,2);
\fill [draw=red, fill=red] (0,2) circle (2pt);
\draw (-0.5,2) node {$f_1$};
\draw [string] (0,2.5) -- (0.5,2.5);

\draw [shorten >=1mm, -to, decorate, decoration={snake,amplitude=.4mm, segment length = 2mm, pre=moveto, pre length = 1mm, post length = 2mm}]
(1.5,1.5) -- (2.5,1.5);
\draw (2,2) node {$B$};

\draw [boundary] (4,0) -- (4,3);
\draw [string] (4,0.5) -- (4.5,0.5);
\draw [string2] (5,1) .. controls (4.5,1) and (4.5,2) .. (4,2);
\fill [draw=red, fill=red] (4,1) circle (2pt);
\draw (3.5,1) node {$f_{-1}$};
\draw [string2] (4,1.5) to [bend right=30] (5,1.5);
\fill [draw=red, fill=red] (4,1.5) circle (2pt);
\draw (3.5,1.5) node {$f_0$};
\draw [string2] (5,2) .. controls (4.5,2) and (4.5,1) .. (4,1);
\fill [draw=red, fill=red] (4,2) circle (2pt);
\draw (3.5,2) node {$f_1$};
\draw [string] (4,2.5) -- (4.5,2.5);

\end{tikzpicture}
\caption{The ``crossing wires'' operation $B$.} \label{fig:operation_B}
\end{center}
\end{figure}
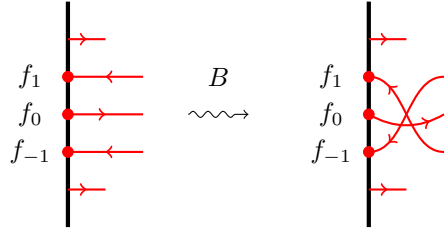

We thus obtain a string diagram $Bs$ well-defined up to regular homotopy. (If we like we could specify the diagram in figure \ref{fig:operation_B} and make $Bs$ defined up to ambient isotopy, but there are two simplest non-ambient-isotopic ways of drawing this arrangement, which are regular homotopic.) In any case $Bs$ is certainly well defined up to spin homotopy or just homotopy. Extending linearly we obtain the maps
\[
\widehat{B} \; : \; \widehat{CS}(\Sigma,F) \To \widehat{CS}(\Sigma,F), \quad
\bar{B} \; : \; CS^\infty(\Sigma,F) \To CS^\infty(\Sigma,F).
\]

Now $\widehat{B}$, $\bar{B}$ are not chain maps, and do not commute with $\partial$. But asking how closely $\widehat{B}$, $\bar{B}$ and $\partial$ commute leads to the lemma, which we now prove.

\begin{proof}
Let $x = \sum_{i=1}^m \bar{s}_i$, where each $\bar{s}_i$ is a distinct spin homotopy class of string diagram without contractible loops; let $s_i$ be a string diagram representing $\bar{s}_i$. Consider $\partial \bar{B} s_i$, which is a sum of diagrams obtained by resolving crossings in $\bar{B} s_i$. There are three diagrams which arise from resolving the three crossings in the crossed wires; these contain all the crossings of $s_i$. The other diagrams in the sum are all the diagrams in $\partial s_i$, with the wires crossed, i.e. $\bar{B} \partial s_i$. Of the first three diagrams, we see that up to homotopy (in fact up to regular homotopy), one is just $s_i$, and the other two both have an outermost non-intersecting strand at $f_0$; hence they are $a_-^* y_i$ and $a_+^* z_i$ for some string diagrams $y_i, z_i$ on $(\Sigma, F')$. See figure \ref{fig:resolving_B_s_i}.

\begin{figure}[ht]
\begin{center}
\begin{tikzpicture}[
scale=1, 
string/.style={thick, draw=red, -to},
boundary/.style={ultra thick}]

\draw (-1,1.5) node {$\partial$};

\draw [boundary] (0,0) -- (0,3);
\draw [rounded corners=8pt] (0,3) -- (2,3) -- (2,0) -- (0,0);
\draw [string] (1,2) .. controls (0.5,2) and (0.5,1) .. (0,1);
\draw [string] (1,1) .. controls (0.5,1) and (0.5,2) .. (0,2);
\draw [string] (0,1.5) to [bend right=30] (1,1.5);
\draw (1.5,2.5) node {$s_i$};

\draw (2.5,1.5) node {$=$};

\draw [xshift=3 cm, boundary] (0,0) -- (0,3);
\draw [xshift=3 cm, rounded corners=8pt] (0,3) -- (2,3) -- (2,0) -- (0,0);
\draw [xshift=3 cm, string] (0,1.5) arc (90:-90:0.25);
\draw [xshift=3 cm, string] (1,1) .. controls (0.5,1) and (0.5,2) .. (0,2);
\draw [xshift=3 cm, string] (1,2) arc (90:270:0.25);
\draw [xshift=3 cm] (1.5,2.5) node {$s_i$};

\draw (5.5,1.5) node {$+$};

\draw [xshift=6 cm, boundary] (0,0) -- (0,3);
\draw [xshift=6 cm, rounded corners=8pt] (0,3) -- (2,3) -- (2,0) -- (0,0);
\draw [xshift=6 cm, string] (0,1.5) arc (-90:90:0.25);
\draw [xshift=6 cm, string] (1,2) .. controls (0.5,2) and (0.5,1) .. (0,1);
\draw [xshift=6 cm, string] (1,1) arc (270:90:0.25);
\draw [xshift=6 cm] (1.5,2.5) node {$s_i$};

\draw (8.5,1.5) node {$+$};

\draw [xshift=9 cm, boundary] (0,0) -- (0,3);
\draw [xshift=9 cm, rounded corners=8pt] (0,3) -- (2,3) -- (2,0) -- (0,0);
\draw [xshift=9 cm, string] (1,1) -- (0,1);
\draw [xshift=9 cm, string] (0,1.5) -- (1,1.5);
\draw [xshift=9 cm, string] (1,2) -- (0,2);
\draw [xshift=9 cm] (1.5,2.5) node {$s_i$};

\draw (11.5,1.5) node {$+$};

\draw [xshift=12 cm, boundary] (0,0) -- (0,3);
\draw [xshift=12 cm, rounded corners=8pt] (0,3) -- (2,3) -- (2,0) -- (0,0);
\draw [xshift=12 cm, string] (1,2) .. controls (0.5,2) and (0.5,1) .. (0,1);
\draw [xshift=12 cm, string] (1,1) .. controls (0.5,1) and (0.5,2) .. (0,2);
\draw [xshift=12 cm, string] (0,1.5) to [bend right=30] (1,1.5);
\draw [xshift=12 cm] (1.5,2.5) node {$\partial s_i$};

\draw (1,-1) node {$\partial B s_i $};
\draw (2.5,-1) node {$=$};
\draw (4,-1) node {$a_-^* y_i$};
\draw (5.5,-1) node {$+$};
\draw (7,-1) node {$a_+^* z_i$};
\draw (8.5,-1) node {$+$};
\draw (10,-1) node {$s_i$};
\draw (11.5,-1) node {$+$};
\draw (13,-1) node {$B \partial s_i$};

\end{tikzpicture}
\caption{Resolutions in $\partial B s_i$.} \label{fig:resolving_B_s_i}
\end{center}
\end{figure}
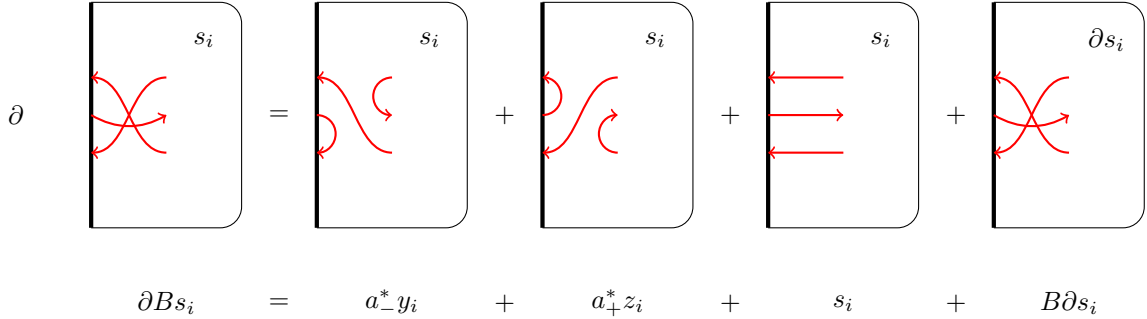

Thus we obtain the following equality in $CS^\infty(\Sigma,F)$, writing $\bar{y}_i, \bar{z}_i$ for the spin homotopy classes of $y_i, z_i$:
\[
\partial \bar{B} \bar{s}_i = \bar{s}_i + a_-^* \bar{y}_i + a_+^* \bar{z}_i + \bar{B} \partial \bar{s}_i.
\]

Summing over $i$ and recalling that $x = \sum_{i=1}^m \bar{s}_i$ gives
\[
\partial \bar{B} x = x + a_-^* \left( \sum_{i=1}^m \bar{y}_i \right)
+ a_+^* \left( \sum_{i=1}^m \bar{z}_i \right) + \bar{B} \partial x.
\]
Recalling that $\partial x = 0$, that as always we are working mod $2$, and setting $u = \bar{B}x$, $y = \sum_{i=1}^m \bar{y}_i$, $z = \sum_{i=1}^m \bar{z}_i$ gives the desired equality $x = a_-^* y + a_+^* z + \partial u$. It remains only to show that $\partial y = \partial z = 0$. Applying $\partial$ to this equality, and recalling that creation operators are chain maps, gives
\[
a_-^* \sum_{i=1}^m \partial \bar{y}_i = a_+^* \sum_{i=1}^m \partial \bar{z}_i.
\]
Both sides are sums of (spin homotopy classes of) string diagrams, but on the left all diagrams have a non-intersecting arc connecting $f_0$ to $f_{-1}$; while on the right all diagrams have a non-intersecting arc connecting $f_0$ to $f_1$. Thus no diagram which occurs on the left is homotopic to any diagram which occurs on the right; so both sides must be $0$. Thus $a_-^* \partial y = a_+^* \partial z = 0$. As creation operators are injective we have $\partial y = \partial z = 0$.
\end{proof}

By the same proof we obtain the corresponding result for $\widehat{CS}$; the same proof works whether we consider diagrams up to spin homotopy or just homotopy.
Let $(\Sigma,F)$, $(\Sigma,F')$ be as above, and let $a_-^*, a_+^*$ now be creation operators $\widehat{CS}(\Sigma,F') \To \widehat{CS}(\Sigma,F)$ obtained by inserting strings in the same places as above.
\begin{lem}[Crossed wires lemma, $\widehat{CS}$ version]
Suppose $x \in \widehat{CS}(\Sigma,F)$ satisfies $\partial x = 0$. Then there exist $y,z \in \widehat{CS}(\Sigma,F')$ and $u \in \widehat{CS}(\Sigma,F)$ such that
\[
\partial y = \partial z = 0
\quad \text{and} \quad
x = a_-^* y + a_+^* z + \partial u.
\]
\qed
\end{lem}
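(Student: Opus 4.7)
The plan is to mirror the proof of the $CS^\infty$ version step by step, adapting each ingredient to the setting of homotopy classes of string diagrams with contractible loops set to zero. First I would define the crossed-wires operator $\widehat{B}\colon \widehat{CS}(\Sigma,F)\to \widehat{CS}(\Sigma,F)$ by exactly the local boundary modification of Figure \ref{fig:operation_B}. Since $\widehat{B}$ only swaps boundary endpoints on three arcs and does not close up any component into a new curve, a diagram with no contractible loops is sent to a diagram with no contractible loops; hence the linear extension makes sense on $\widehat{CS}$, and is well-defined on homotopy (indeed regular homotopy) classes.

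Next, writing $x=\sum_{i=1}^m \widehat{s}_i$ with each $\widehat{s}_i$ a homotopy class without contractible loops, I would analyse $\partial \widehat{B}\widehat{s}_i$ by splitting the sum of resolutions into two groups: resolutions of the three new crossings introduced by $\widehat{B}$, and resolutions of crossings already present in $s_i$. The local picture of Figure \ref{fig:resolving_B_s_i} shows that, up to regular homotopy (and therefore up to homotopy), the first group equals $\widehat{s}_i + a_-^* \widehat{y}_i + a_+^* \widehat{z}_i$ for some diagrams $y_i,z_i$ on $(\Sigma,F')$, while the second group is $\widehat{B}\partial \widehat{s}_i$. So in $\widehat{CS}(\Sigma,F)$ one gets
\[
\partial \widehat{B}\widehat{s}_i = \widehat{s}_i + a_-^* \widehat{y}_i + a_+^* \widehat{z}_i + \widehat{B}\partial \widehat{s}_i.
\]
Summing over $i$, using $\partial x=0$, and setting $u=\widehat{B}x$, $y=\sum \widehat{y}_i$, $z=\sum \widehat{z}_i$, I would obtain the sought decomposition $x = a_-^* y + a_+^* z + \partial u$.

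To finish, I would apply $\partial$ to this identity; since the creation operators are chain maps, this yields $a_-^* \partial y = a_+^* \partial z$ in $\widehat{CS}(\Sigma,F)$. Every diagram appearing on the left has an outermost non-intersecting arc joining $f_0$ to $f_{-1}$, while every diagram on the right has an outermost arc joining $f_0$ to $f_1$; no homotopy class (of diagrams without contractible loops) can simultaneously occur on both sides, so each side is zero. Injectivity of the creation operators $a_\pm^*$ on $\widehat{CS}$, already noted in the text, then forces $\partial y = \partial z = 0$.

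The delicate point, and the one I would treat most carefully, is checking that the local identity above really holds in $\widehat{CS}$ after contractible loops are zeroed out. Two things must be verified: that the operation ``kill contractible loops'' commutes with $\widehat{B}$, which is clear because $\widehat{B}$ acts only in a collar of $\partial\Sigma$ and cannot create, destroy, or alter any closed component; and that a contractible loop appearing in any of the four terms $\widehat{s}_i$, $a_-^*\widehat{y}_i$, $a_+^*\widehat{z}_i$, $\widehat{B}\partial\widehat{s}_i$ also appears in $\partial \widehat{B}\widehat{s}_i$ in a matching way, which again follows by direct inspection of the local resolutions in Figure \ref{fig:resolving_B_s_i}. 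Once these bookkeeping points are confirmed, the rest of the argument is identical in form to the $CS^\infty$ case.
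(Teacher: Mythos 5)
Your proof is correct and is essentially the argument the paper intends when it writes ``by the same proof we obtain the corresponding result for $\widehat{CS}$''; you simply spell out the bookkeeping that the paper leaves implicit. The key observations you make to justify this transfer are exactly the right ones: the operator $\widehat{B}$ only reroutes boundary-adjacent arcs, so it neither creates nor destroys closed components, hence it commutes with the map that kills contractible loops, and the local identity $\partial \widehat{B}\widehat{s}_i = \widehat{s}_i + a_-^* \widehat{y}_i + a_+^* \widehat{z}_i + \widehat{B}\partial \widehat{s}_i$ continues to hold in $\widehat{CS}$ with terms containing contractible loops consistently cancelling on both sides. The concluding step (no diagram with an outermost $f_0$--$f_{-1}$ arc is homotopic to one with an outermost $f_0$--$f_1$ arc, plus injectivity of $a_\pm^*$) is identical to the $CS^\infty$ case and carries over verbatim.
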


\subsection{Inductive step}

The crossed wires lemma now allows us to find a basis for each $\widehat{HS}(D^2, F_n)$.

\begin{prop}
\label{prop:induction_step}
Let $n \geq 1$. If the $2^{n-1}$ elements of the form $v_w$ in $\widehat{HS}(D^2, F_n)$ form a basis, then the $2^n$ elements of the form $v_w$ in $\widehat{HS}(D^2, F_{n+1})$ also form a basis.
\end{prop}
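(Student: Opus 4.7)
The strategy is to combine linear independence (already handled by Lemma \ref{lem:linearly_independent}, which applies unchanged for words of length $n$) with a spanning argument driven entirely by the Crossed Wires Lemma applied to the two basepoint creation operators
\[
a_-^*, a_+^* \; : \; \widehat{CS}(D^2, F_n) \To \widehat{CS}(D^2, F_{n+1})
\]
of Figure \ref{fig:a_and_a_star}. These operators insert strings at the two sites adjacent to the basepoint, so their target points occupy three consecutive positions on $\partial D^2$, exactly the configuration required by the lemma.

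My plan for the spanning step is as follows. Take any homology class in $\widehat{HS}(D^2, F_{n+1})$, represented by a cycle $x \in \widehat{CS}(D^2, F_{n+1})$. The Crossed Wires Lemma produces cycles $y, z \in \widehat{CS}(D^2, F_n)$ and a chain $u \in \widehat{CS}(D^2, F_{n+1})$ with
\[
x = a_-^* y + a_+^* z + \partial u.
\]
Passing to homology (using that $a_\pm^*$ are chain maps by the lemma of Section \ref{sec:creation_annihilation}), we obtain $[x] = a_-^*[y] + a_+^*[z]$ in $\widehat{HS}(D^2, F_{n+1})$.

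By the inductive hypothesis the $2^{n-1}$ elements $v_w$ (for words $w$ of length $n-1$) form a basis of $\widehat{HS}(D^2, F_n)$, so we may write
\[
[y] = \sum_{|w|=n-1} c_w v_w, \qquad [z] = \sum_{|w|=n-1} d_w v_w,
\]
with $c_w, d_w \in \Z_2$. Applying $a_-^*$ and $a_+^*$ and using $v_{-w} = a_-^* v_w$ and $v_{+w} = a_+^* v_w$ (this is just the definition $v_w = a_w^* v_\emptyset$ of Section \ref{sec:building_a_basis}), we conclude
\[
[x] = \sum_{|w|=n-1} c_w v_{-w} + \sum_{|w|=n-1} d_w v_{+w},
\]
a $\Z_2$-linear combination of the $2^n$ elements $v_{w'}$ with $|w'|=n$. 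Together with Lemma \ref{lem:linearly_independent}, which shows these $2^n$ elements are linearly independent, this proves they form a basis.

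The only nontrivial ingredient is the Crossed Wires Lemma itself; once it is in hand the induction is essentially formal. The one point that needs to be checked carefully is that the basepoint creation operators $a_-^*, a_+^*$ genuinely satisfy the adjacency hypothesis of the lemma, so that the three endpoints $f_{-1}, f_0, f_1$ in the statement of Lemma \ref{lem:decomposition} correspond precisely to the three consecutive positions around the basepoint in Figure \ref{fig:a_and_a_star}; this is immediate from the way the basepoint is chosen and the way the operators are defined.
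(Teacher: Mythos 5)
Your proof is correct and follows essentially the same route as the paper: apply the crossed wires lemma to decompose a cycle $x$ as $a_-^* y + a_+^* z + \partial u$, pass to homology to see $[x]$ lies in $a_-^*\widehat{HS}(D^2,F_n) + a_+^*\widehat{HS}(D^2,F_n)$, expand in the inductive basis, and combine with Lemma \ref{lem:linearly_independent} for linear independence. The only difference is presentational --- you write out the coefficients $c_w, d_w$ explicitly, whereas the paper phrases the same step in terms of spans.
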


\begin{proof}
An element of $\widehat{HS}(D^2, F_{n+1})$ is represented by $x \in \ker \partial \subseteq \widehat{CS}(D^2, F_{n+1})$, i.e. $x$ is a linear combination of (homotopy classes of) string diagrams without loops on $(D^2, F_{n+1})$ such that $\partial x = 0$. 

From the crossed wires lemma, there exist $y,z \in \widehat{CS}(D^2, F_n)$ and $u \in \widehat{CS}(D^2, F_{n+1})$ such that
\[
\partial y = \partial z = 0, \quad x = a_-^* y + a_+^* z + \partial u.
\]
It follows that the homology class of $x$ lies in $a_-^* \widehat{HS} (D^2, F_n) + a_+^* \widehat{HS} (D^2, F_n)$. As $\widehat{HS}(D^2, F_n)$ is spanned by the $v_w$ for words $w$ of length $n-1$, $a_-^* \widehat{HS}(D^2, F_n)$ is spanned by the $a_-^* v_w = v_{-w}$ and $a_+^* \widehat{HS}(D^2, F_n)$ is spanned by the $a_+^* v_w = v_{+w}$; hence $\widehat{HS}(D^2, F_{n+1})$ is spanned by the $v_w$ for words of length $n$. Lemma \ref{lem:linearly_independent} says the $v_w$ are linearly independent, so they form a basis.
\end{proof}

As $\{v_\emptyset\}$ forms a basis for $\widehat{HS}(D^2, F_1)$ by lemma \ref{lem:base_case}, proposition \ref{prop:induction_step} immediately gives:
\begin{cor}
For all $n \geq 0$, the elements $v_w$, for words of length $n$ form a basis of $\widehat{HS}(D^2, F_{n+1})$.
\qed
\end{cor}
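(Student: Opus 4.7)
The proof is an immediate induction on $n$, with the base case supplied by lemma \ref{lem:base_case} and the inductive step supplied by proposition \ref{prop:induction_step}.

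The plan is as follows. First, I would record the base case $n=0$: there is a single word of length $0$, namely the empty word $\emptyset$, and lemma \ref{lem:base_case} says $\widehat{HS}(D^2, F_1) \cong \Z_2$ with generator $v_\emptyset$, so $\{v_\emptyset\}$ indeed forms a basis. Then, assuming the statement holds for some $n-1 \geq 0$, so that the $2^{n-1}$ classes $v_w$ with $|w| = n-1$ form a basis of $\widehat{HS}(D^2, F_n)$, I would apply proposition \ref{prop:induction_step} directly to conclude that the $2^n$ classes $v_w$ with $|w| = n$ form a basis of $\widehat{HS}(D^2, F_{n+1})$. Iterating this implication yields the corollary for all $n \geq 0$.

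There is no genuine obstacle here, since the real work has already been done. The spanning half of the inductive step comes from the crossed wires lemma applied to the two creation sites adjacent to the basepoint, which writes any cycle $x \in \widehat{CS}(D^2,F_{n+1})$ in the form $a_-^* y + a_+^* z + \partial u$ with $y,z$ cycles, so that the image of $a_-^* \oplus a_+^*$ on $\widehat{HS}(D^2,F_n)^{\oplus 2}$ covers $\widehat{HS}(D^2,F_{n+1})$; the linear independence half comes from lemma \ref{lem:linearly_independent}, which uses the composition law $a_- a_-^* = a_+ a_+^* = 1$, $a_\mp a_\pm^* = 0$ to isolate any individual $v_w$ via a sequence of annihilations. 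Both ingredients are already in hand, so the corollary is essentially a formality stringing them together.
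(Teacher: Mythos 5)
Your proof is correct and matches the paper's own argument exactly: the corollary is stated immediately after lemma \ref{lem:base_case} and proposition \ref{prop:induction_step} precisely as the conclusion of the induction they set up. The additional remarks recalling how the crossed wires lemma and lemma \ref{lem:linearly_independent} feed into the inductive step are accurate but not needed beyond citing proposition \ref{prop:induction_step}.
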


\begin{cor}
Any nonzero homology class in $\widehat{HS}(D^2, F_n)$ is represented by a linear combination of string diagrams without intersections, in fact which are sets of sutures.
\qed
\end{cor}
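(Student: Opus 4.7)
The plan is to deduce this corollary directly from the previous corollary, which asserts that the distinguished elements $v_w$, for words $w$ of length $n-1$ on the alphabet $\{-,+\}$, form a basis of $\widehat{HS}(D^2, F_n)$. Since every nonzero homology class is a $\Z_2$-linear combination of these basis vectors, it suffices to exhibit each $v_w$ as (the class of) a string diagram with no intersections, and in fact as a set of sutures on $(D^2, F_n)$.

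To do this I would argue by induction on the length of the word $w$, tracking a specific representative diagram for $v_w$. The base case is $v_\emptyset$, which by Lemma \ref{lem:base_case} is represented by a single properly embedded arc with no crossings, and this is tautologically a set of sutures on $(D^2, F_1)$. For the inductive step, suppose $v_w$ is represented by a chord diagram $\Gamma_w$ on $(D^2, F_n)$, i.e.\ a collection of pairwise disjoint properly embedded arcs with boundary $F_n$ and no closed components. By definition, $v_{\pm w} = a_\pm^* v_w$, and the creation operator $a_\pm^*$ acts on a representative by inserting an outermost arc at one of the two sites adjacent to the basepoint, disjoint from all existing strands (see Figure \ref{fig:a_and_a_star}). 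The result is again an embedded collection of pairwise disjoint arcs on the disc, with boundary $F_{n+1}$ and no closed curves.

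It remains to observe that any such embedded collection of arcs on an alternating marked disc is actually a set of sutures. Indeed, on $(D^2, F_n)$ with $F_n$ alternating, an embedded 1-submanifold $\Gamma$ with $\partial \Gamma = F_n$ and no closed curves cuts the disc into simply connected regions; using the alternation of signs along $\partial D^2$ together with the orientation on $\Gamma$ induced by the labelling ``in''/``out'' of its endpoints, the complementary regions can be coherently two-coloured as $R_+$ and $R_-$ so that $\partial R_\pm = C_\pm \cup \Gamma$ as oriented 1-manifolds. Thus $\Gamma_w$ satisfies the sutured surface axioms, and every $v_w$ has a representative in $\widehat{CS}_{sut}(D^2, F_n)$.

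I expect the only subtle point to be this last verification that the chord diagrams built by successive creation operators really are sets of sutures rather than merely intersectionless diagrams. The non-obstruction comes from the alternation of $F_n$ combined with the fact that creation operators insert arcs connecting one ``in'' point to an adjacent ``out'' point. Once this is in hand, the corollary follows immediately: any $x \in \widehat{HS}(D^2, F_n)$ can be written as $x = \sum_i v_{w_i}$, and the representing sum $\sum_i \Gamma_{w_i}$ is a linear combination of string diagrams which are sets of sutures, as required.
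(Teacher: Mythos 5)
Your argument is correct and follows the same route the paper takes: the previous corollary gives the $\Z_2$-basis $\{v_w\}$, each $v_w = a_w^* v_\emptyset$ is represented by a chord diagram built by inserting outermost arcs, and a chord diagram on an alternating marked disc is a set of sutures. The paper treats this as immediate (marking it $\qed$); you have simply made explicit the induction and the two-colouring check that the paper leaves implicit.
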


Indeed, writing $\widehat{CS}_{sut}$ for the subspace of $\widehat{CS}$ generated by sets of sutures, and $\partial$ for the differential on $\widehat{CS}$, we have a linear map
\[
i \; : \; \widehat{CS}_{sut}(D^2, F_n) \To \ker \partial \To \frac{\ker \partial}{\IIm \partial} = \widehat{HS}(D^2, F_n)
\]
arising from the composition of inclusion and quotient maps. Since, as we have shown, $\widehat{HS}(D^2, F_n)$ is spanned by homology classes of (homotopy classes of) string diagrams which are sutures, this map $i$ is surjective.

Now the sum of a bypass triple of sutures on $(D^2, F_n)$ lies in the image of $\partial$, as shown in figure \ref{fig:fundamental_boundary}. If we introduce ``crossed wires'' at the site of the bypass, then $\partial$ gives precisely the sum of the diagrams in the bypass triple. Thus the surjective map $i$ factors as
\[
\widehat{CS}_{sut}(D^2, F_n) \To \frac{\widehat{CS}_{sut}(D^2,F_n)}{\widehat{Byp}(D^2, F_n)} \To \widehat{HS}(D^2, F_n).
\]
In the notation of \cite{Me09Paper}, the quotient $\frac{\widehat{CS}_{sut}(D^2,F_n)}{\widehat{\Byp}(D^2,F_n)}$ is $SFH_{comb}(T,n)$, which is computed in that paper to have dimension $2^{n-1}$. Above we computed that $\widehat{HS}(D^2, F_n)$ has the same dimension. Thus we have an isomorphism
\[
\widehat{HS}(D^2, F_n) \cong \frac{\widehat{CS}_{sut}(D^2, F_n)}{\widehat{\Byp}(D^2,F_n)}.
\]
As discussed in section \ref{sec:floer-theoretic}, from \cite{Me09Paper}, sutures modulo bypasses on $(D^2, F_n)$ gives $SFH(D^2 \times S^1, F_n \times S^1)$. So the above isomorphic vector spaces are also isomorphic with $SFH(D^2 \times S^1, F_n \times S^1)$. From \cite{Me12_itsy_bitsy}, this is also isomorphic to $(\Z_2 \0 \oplus \Z_2 \1 )^{\otimes (n-1)}$.

As bypass surgery preserves Euler class, we can immediately restrict to sutures of a specific Euler class and obtain an isomorphism 
\[
\widehat{HS}_e(D^2, F_n) \cong \frac{\widehat{CS}_{sut,e}(D^2,F_n)}{\widehat{Byp_e}(D^2,F_n)}.
\]
As discussed in \cite{Me09Paper}, this is also isomorphic to a summand $SFH_e(D^2 \times S^1, F_n \times S^1)$ of $SFH(D^2 \times S^1, F_n \times S^1)$. And as discussed in \cite{Me12_itsy_bitsy}, this is also isomorphic to the summand of $(\Z_2 \0 \oplus \Z_2 \1)^{\otimes (n-1)}$ generated by tensor products $e_1 \otimes \cdots \otimes e_n$ where each $e_i \in \{\0, \1\}$ and the number of $\1$'s minus $\0$'s is $e$.

This proves theorem \ref{thm:main_thm}.

\section{Discs with spin}
\label{sec:discs_with_spin}

We now prove the main theorem \ref{thm:main_theorem_infty} for $HS^\infty$ when $\Sigma=D^2$. The proof runs along the lines of the proof for $\widehat{HS}$; most of the effort goes into the base case.

\subsection{Base case}
\label{sec:spin_base_case}

We analyse string diagrams $s$ on $(D^2, F_1)$ up to spin homotopy. Obviously any such $s$ is homotopic to a single strand running between the points of $F_1$, together with some number $m \geq 0$ of closed curves. The homotopy classes of string diagrams are parametrised by $m$.

Given a string diagram $s$, we may perform a string Reidemeister I move, adding a clockwise or anticlockwise whirl on it, which adjusts the generalised Euler class by $-2$ or $2$ respectively. By the definition of generalised Euler class, $e(s)$ must be even: all the curves of $s$ have curvature which is an even multiple of $2\pi$. By lemma \ref{prop:spin_homotopy}, the spin homotopy class of a string diagram is determined by its homotopy class (i.e. $m$) and its generalised Euler class $e$. 

Thus, the spin homotopy classes of string diagrams on $(D^2, F_1)$ are precisely parametrised by pairs of integers $(m,e)$ where $m \geq 0$ and $e$ is even. Let $\sigma_{m,e}$ denote this spin homotopy class.

We noted in section \ref{sec:generalised_euler_class} that $\partial$ preserves $e$. As $\partial$ is well-defined on spin homotopy classes, to compute $\partial \sigma_{m,e}$ it's sufficient to take a single representative string diagram $s_{m,e}$ of the class $\sigma_{m,e}$. We can take $s_{m,e}$ to consist of $m$ non-intersecting anticlockwise closed curves, and a strand which has some number $k$ of whirls (and hence $|k|$ self-intersections) added to obtain the correct $e$. See figure \ref{fig:sigma_m_e}. We can easily compute $k = \frac{e}{2} - m$.

\begin{figure}[ht]
\begin{center}

\begin{tikzpicture}[
scale=1.5, 
string/.style={thick, draw=red, postaction={nomorepostaction, decorate, decoration={markings, mark=at position 0.5 with {\arrow{>}}}}}]

\draw (0,0) circle (1 cm);

\draw [string] (0,-1) .. controls (0,-0.5) and (-0.5,0) .. (-0.5,-0.5) .. controls (-0.5,-1) and (0,-0.5) .. (0,0) .. controls (0,0.5) and (-0.5,1) .. (-0.5,0.5) .. controls (-0.5,0) and (0,0.5) .. (0,1);
\draw [string] (0.75,-0.25) circle (0.125 cm);
\draw [string] (0.75,0.25) circle (0.125 cm);

\end{tikzpicture}
\caption{The string diagram $s_{m,e}$. There are $k$ whirls in the arc, and $m$ anticlockwise closed curves.}
\label{fig:sigma_m_e}
\end{center}
\end{figure}
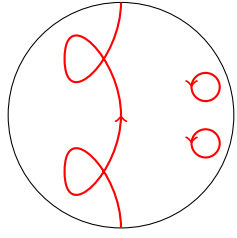

We see that, if $k$ is even, then $\partial s_{m,e}$ consists of an even number of spin homotopic diagrams; while if $k$ is odd, then $\partial s_{m,e}$ consists of an odd number of spin homotopic diagrams, with $m+1$ closed curves and generalised Euler class $e$. Hence we have proved the following lemma.
\begin{lem}
The chain complex $CS^\infty(D^2, F_1)$ is freely generated over $\Z_2$ by $\{ \sigma_{m,e} \}$, over all integers $m,e$ satisfying $m \geq 0$ and $e$ even. The differential is given by
\[
\partial \sigma_{m,e} = \left\{ \begin{array}{rl} 
	0 & \text{$m + \frac{e}{2}$ even} \\
	\sigma_{m+1,e} & \text{$m + \frac{e}{2}$ odd}
	\end{array} \right.
\]
\qed
\end{lem}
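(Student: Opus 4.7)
The plan is to explicitly enumerate spin homotopy classes on $(D^2, F_1)$ and then perform a resolution count on a standard representative.

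For the generators, I would begin by parametrising the homotopy classes: any string diagram on $(D^2, F_1)$ is a single arc from the ``in'' point to the ``out'' point together with some number $m \geq 0$ of closed curves, and up to homotopy rel boundary the only datum is $m$. By proposition \ref{prop:spin_homotopy}, spin homotopy classes on a disc are in bijection with pairs (homotopy class, generalised Euler class), so they are labelled by pairs $(m,e)$. To verify that $e$ is necessarily even, observe that each closed curve is an immersion $S^1 \to D^2$ of integer winding contributing $2\pi \cdot (\text{integer})$ to $\sum k_i$, while the arc is homotopic to an embedded chord (curvature $0$) with some signed whirls added, each of which also contributes $\pm 2\pi$. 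Dividing by $\pi$, $e \in 2\Z$.

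For the differential I would pass to the explicit representative $s_{m,e}$ in figure \ref{fig:sigma_m_e}: $m$ disjoint anticlockwise closed loops plus a near-vertical arc carrying $k$ uniformly-signed whirls, anticlockwise if $k > 0$ and clockwise if $k < 0$, where $k = e/2 - m$ is forced by the identity $e = 2m + 2k$. This diagram is in general position with exactly $|k|$ crossings, one per whirl. Resolving a single whirl-crossing pinches off a small closed loop oriented in the same sense as the whirl, leaving the arc with $|k|-1$ whirls of that same sign; for $k > 0$ the result is manifestly $s_{m+1,e}$, while for $k < 0$ one obtains $m$ anticlockwise and one clockwise closed loop together with $|k|-1$ clockwise whirls on the arc. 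The latter diagram is homotopic to $s_{m+1,e}$ and has Euler class $e$ (both diagrams have this Euler class, since $\partial$ preserves $e$ by section \ref{sec:generalised_euler_class}), so by proposition \ref{prop:spin_homotopy} it is spin homotopic to $s_{m+1,e}$. Thus all $|k|$ resolutions contribute the same class, and summing mod $2$,
\[
\partial \sigma_{m,e} \;=\; |k|\,\sigma_{m+1,e} \pmod{2}.
\]
Using $|k| \equiv k \equiv e/2 + m \pmod{2}$ yields the dichotomy in the statement.

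The main potential obstacle is the bookkeeping in the $k < 0$ case, where the resolved diagrams have mixed-orientation closed curves: one must confirm they really do represent $\sigma_{m+1,e}$. Proposition \ref{prop:spin_homotopy} reduces this to equality of homotopy class (clear, since both sides have $m+1$ closed curves) and equality of Euler class (immediate from preservation of $e$ under $\partial$), so in the end there is no serious analytic difficulty — only careful tracking of orientations and curvatures.
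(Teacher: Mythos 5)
Your proposal is correct and follows essentially the same route as the paper: parametrising spin homotopy classes by $(m,e)$ via Proposition~\ref{prop:spin_homotopy}, passing to the explicit representative $s_{m,e}$ with $k = e/2 - m$ whirls, and observing that all $|k|$ resolutions give the same spin homotopy class so $\partial\sigma_{m,e} = |k|\,\sigma_{m+1,e} \pmod 2$. The only difference is that you spell out the $k<0$ case (where the pinched-off loop is clockwise) by invoking Proposition~\ref{prop:spin_homotopy} again, a detail the paper's ``we see'' leaves implicit.
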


Since $\partial$ preserves $e$, the chain complex and homology split into summands $CS^\infty_e (D^2, F_1)$ and $HS^\infty_e (D^2, F_1)$ over all even $e \in \Z$. When $e$ is not a multiple of $4$, i.e. $e = 4i+2$, the differential is given by
\[
\sigma_{0,e} \mapsto \sigma_{1,e}, \quad \sigma_{2,e} \mapsto \sigma_{3,e}, \quad \ldots
\]
and the homology in this summand is trivial. When $e$ is a multiple of $4$, the differential is given by
\[
\sigma_{0,e} \mapsto 0, \quad \sigma_{1,e} \mapsto \sigma_{2,e}, \quad \sigma_{3,e} \mapsto \sigma_{4,e}, \ldots
\]
and the homology is generated by (the homology class of) $\sigma_{0,e}$.

Thus $HS^\infty(D^2, F_1)$ has basis given by (homology classes of) $\sigma_{0,e}$, over all $e \in 4 \Z$. Recalling the definition of the $U$ map (section \ref{sec:defn_of_U}) and its effect on Euler class (section \ref{sec:gen_Euler_class}), we have $\sigma_{0,4i} = U^i \sigma_{0,0}$. We then immediately have the following computation.
\begin{prop}
The map
\[
\Z_2[U,U^{-1}] \To HS^\infty(D^2, F_1)
\]
which takes $1 \mapsto \sigma_{0,0}$ and preserves the action of $U$ defined on $HS^\infty(D^2, F_1)$, is an isomorphism of $\Z_2[U,U^{-1}]$-modules.
\end{prop}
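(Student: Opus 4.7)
The plan is to recognize that the proposition follows almost immediately from the explicit computation of $HS^\infty(D^2, F_1)$ carried out just above, together with the already-established fact that $U$ descends to an action on homology.

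First I would define the candidate map $\phi \colon \Z_2[U,U^{-1}] \to HS^\infty(D^2, F_1)$ by specifying $\phi(U^i) = U^i \cdot [\sigma_{0,0}]$ on the natural $\Z_2$-basis $\{U^i\}_{i \in \Z}$ of $\Z_2[U,U^{-1}]$, extending $\Z_2$-linearly. Since $U$ is a well-defined operator on $HS^\infty$ commuting with $\partial$ (established in the well-definition/$U$-map sections), this formula gives a homomorphism of $\Z_2[U,U^{-1}]$-modules essentially by construction: $\phi(U^j \cdot U^i) = \phi(U^{i+j}) = U^{i+j} [\sigma_{0,0}] = U^j \cdot \phi(U^i)$.

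Next I would identify $\phi(U^i)$ with the distinguished generator $[\sigma_{0,4i}]$ of $HS^\infty_{4i}(D^2, F_1)$. At the level of spin homotopy classes in $CS^\infty(D^2, F_1)$, the class $\sigma_{0,0}$ is represented by a single properly embedded arc joining the two points of $F_1$ with no closed curves and Euler class $0$. By the definition of $U$ in section \ref{sec:defn_of_U}, applying $U^i$ adds $2i$ anticlockwise whirls (or $2|i|$ clockwise whirls if $i < 0$); the result still has no closed curves, and its Euler class is $0 + 4i = 4i$ by the computation $e(U^n s) = e(s) + 4n$ from section \ref{sec:gen_Euler_class}. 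Any two such diagrams with the same pair $(m,e) = (0,4i)$ are spin homotopic by proposition \ref{prop:spin_homotopy}, so $U^i \sigma_{0,0} = \sigma_{0,4i}$ already in $CS^\infty(D^2, F_1)$, and hence $\phi(U^i) = [\sigma_{0,4i}]$ in homology.

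Finally, the computation immediately preceding the proposition showed that $\{[\sigma_{0,4i}]\}_{i \in \Z}$ is a $\Z_2$-basis of $HS^\infty(D^2, F_1)$, with homology in Euler-class summands $e \not\equiv 0 \pmod 4$ vanishing. Thus $\phi$ carries the $\Z_2$-basis $\{U^i\}_{i \in \Z}$ bijectively onto a $\Z_2$-basis of $HS^\infty(D^2, F_1)$, so it is a $\Z_2$-linear isomorphism, and combined with the module-map property of step one it is an isomorphism of $\Z_2[U,U^{-1}]$-modules. There is no real obstacle here: all the substantive work was done in establishing the classification of spin homotopy classes on $(D^2, F_1)$ and the explicit form of $\partial \sigma_{m,e}$; the only mild subtlety is checking that $U^i \sigma_{0,0}$ and $\sigma_{0,4i}$ coincide as spin homotopy classes, which reduces to matching the invariants $(m,e)$ via proposition \ref{prop:spin_homotopy}.
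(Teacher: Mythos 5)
Your argument is correct and follows the same route as the paper: both rely on the preceding computation that $\{[\sigma_{0,e}] : e \in 4\Z\}$ is a $\Z_2$-basis of $HS^\infty(D^2,F_1)$, the identification $U^i\sigma_{0,0} = \sigma_{0,4i}$ via the $(m,e)$-classification of spin homotopy classes, and the already-established fact that $U$ descends to homology. You have merely spelled out the verification that the paper leaves implicit.
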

Note $1 \in \Z_2[U,U^{-1}]$ corresponds to the vacuum diagram $v_\emptyset$, and $U^j \in \Z_2[U,U^{-1}]$ corresponds to that diagram with $j$ whirls, a ``whirly vacuum''. ``Homology is generated by whirly vacua''.

Note also how closed curves have disappeared in the homology. Any $x \in HS^\infty (D^2, F_1)$ can be written as $x = \sum_i \sigma_i$, where each $\sigma_i$ is the spin homotopy class of a string diagram with no closed curves. Each $\sigma_i$ can be taken to be a whirly vacuum.

\subsection{Inductive step}

Annihilation and creation operators can then be applied on each $(D^2, F_n)$ as in section \ref{sec:building_a_basis}, giving maps
\[
a_\pm : HS^\infty (D^2, F_n) \To HS^\infty(D^2, F_{n-1}), \quad
a_\pm^* : HS^\infty(D^2, F_n) \To HS^\infty(D^2, F_{n+1})
\]
which satisfy similar relations. They also commute with $U$ and hence give maps of $\Z_2[U,U^{-1}]$-modules. Again each $a_\pm^*$ is injective. And again for a word $w$ of length $n$ on $\{-,+\}$ we obtain a composite creation operator $a_w^*$ and let $v_w = a_w^* v_\emptyset = a_w^* 1$.

\begin{lem}
The $2^n$ elements of the form $v_w$ in $HS^\infty(D^2, F_{n+1})$ are linearly independent over $\Z_2[U,U^{-1}]$.
\end{lem}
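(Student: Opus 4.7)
The strategy is to mimic the proof of Lemma \ref{lem:linearly_independent}, but now working over $\Z_2[U,U^{-1}]$ instead of $\Z_2$, and reducing the question to the base case computation $HS^\infty(D^2, F_1) \cong \Z_2[U,U^{-1}]$.

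Suppose for contradiction that there is a nontrivial relation $\sum_i p_i \, v_{w_i} = 0$ in $HS^\infty(D^2, F_{n+1})$, where the $w_i$ are distinct words of length $n$ on $\{-,+\}$ and the $p_i \in \Z_2[U,U^{-1}]$ are nonzero Laurent polynomials. For any particular word $w_i = \epsilon_1 \epsilon_2 \cdots \epsilon_n$, let $a_{w_i}$ denote the composition of annihilation operators $a_{\epsilon_n} \cdots a_{\epsilon_2} a_{\epsilon_1}$, which is the sequence that ``undoes'' the creation operators used to build $v_{w_i}$. By the same combinatorial argument given in \cite{Me09Paper} (and used in the proof of Lemma \ref{lem:linearly_independent}), the relations $a_- a_-^* = a_+ a_+^* = 1$, $a_- a_+^* = a_+ a_-^* = 0$ imply that $a_{w_i} v_{w_i} = v_\emptyset$ while $a_{w_i} v_{w_j} = 0$ for $j \neq i$.

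The key additional observation is that the annihilation operators $a_\pm$ commute with $U$, since $U$ acts by inserting whirls away from the sites of annihilation, so $a_\pm$ are $\Z_2[U,U^{-1}]$-module maps on both $CS^\infty$ and (being chain maps) on $HS^\infty$. Consequently, applying $a_{w_i}$ to the relation $\sum_j p_j v_{w_j} = 0$ yields $p_i \, v_\emptyset = 0$ in $HS^\infty(D^2, F_1)$.

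By the base case proposition just established, $HS^\infty(D^2, F_1) \cong \Z_2[U,U^{-1}]$ as a $\Z_2[U,U^{-1}]$-module, with the generator $v_\emptyset$ corresponding to $1$. Therefore $p_i \, v_\emptyset = 0$ forces $p_i = 0$ in $\Z_2[U,U^{-1}]$, contradicting the assumption that $p_i$ was nonzero. This contradiction establishes linear independence. The only real work has already been absorbed into the base case computation and into the verification that creation/annihilation operators are chain maps commuting with $U$; no further obstacle arises.
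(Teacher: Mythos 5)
Your proof is correct and follows essentially the same strategy as the paper's: apply the word-specific composite annihilation operator $a_{w_i}$ (which commutes with $U$ and hence is a $\Z_2[U,U^{-1}]$-module map) to the putative relation, isolate $p_i v_\emptyset$, and invoke the base case $HS^\infty(D^2,F_1)\cong\Z_2[U,U^{-1}]$ to conclude $p_i=0$. You are slightly more explicit about the final step (identifying $v_\emptyset$ with $1$ and citing freeness) than the paper's terse one-line version, but it is the same argument.
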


\begin{proof}
Again for each word $w$ there is a sequence of annihilations which send $v_w \mapsto v_\emptyset$ but each other $v_{w'} \mapsto 0$. These annihilations commute with $U$. If we have a nontrivial linear combination over $\Z_2[U,U^{-1}]$, then
\[
\sum_i p_{w_i}(U) v_{w_i} = 0
\]
where each $p_{w_i}(U)$ is a Laurent polynomial in $U$ over $\Z_2$. Applying the sequence of annihilations for $w_i$ then sends $p_{w_i}(U) v_{w_i} \mapsto p_{w_i}(U)$ but every other term to $0$. Hence $p_{w_i} (U) = 0$.
\end{proof}

On the other hand, if the $2^{n-1}$ elements of the form $v_w$ generate $HS^\infty(D^2, F_n)$ over $\Z_2[U,U^{-1}]$, then the crossed wires lemma \ref{lem:decomposition} shows that $HS^\infty(D^2, F_{n+1})$ is spanned by $a_-^* HS^\infty(D^2, F_n)$ and $a_+^* HS^\infty(D^2, F_n)$, hence is generated by the $v_w$ for words $w$ of length $n$.

\begin{cor}
For all $n \geq 0$, the elements $v_w$ for words of length $n$ form a basis of $HS^\infty(D^2, F_{n+1})$ over $\Z_2[U,U^{-1}]$. The elements $U^j v_w$, over all $j \in \Z$ and words $w$ of length $n$, form a basis of $HS^\infty(D^2, F_{n+1})$ over $\Z_2$.
\qed
\end{cor}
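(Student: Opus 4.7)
The plan is to prove both statements by induction on $n$, mirroring exactly the structure of the proof for $\widehat{HS}$ given in Section \ref{sec:homology_computation}. All the ingredients assembled immediately before the corollary---the base case proposition identifying $HS^\infty(D^2,F_1)$ with $\Z_2[U,U^{-1}]$, the linear independence lemma, and the spanning statement derived from the $CS^\infty$ version of the crossed wires lemma---are precisely what is needed.

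For the base case $n=0$, the preceding proposition gives an isomorphism of $\Z_2[U,U^{-1}]$-modules $\Z_2[U,U^{-1}] \cong HS^\infty(D^2,F_1)$ sending $1 \mapsto \sigma_{0,0} = v_\emptyset$. Since $\emptyset$ is the unique word of length $0$, the singleton $\{v_\emptyset\}$ is a basis over $\Z_2[U,U^{-1}]$.

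For the inductive step, assume the $2^{n-1}$ elements $v_{w'}$, over words $w'$ of length $n-1$, form a $\Z_2[U,U^{-1}]$-basis of $HS^\infty(D^2,F_n)$. Linear independence of the $2^n$ elements $v_w$ with $|w|=n$ in $HS^\infty(D^2,F_{n+1})$ is the lemma just proved (using that the annihilation operators commute with $U$). For spanning, let $[x] \in HS^\infty(D^2,F_{n+1})$ be represented by a cycle $x \in CS^\infty(D^2,F_{n+1})$. The crossed wires lemma (Lemma \ref{lem:decomposition}) produces cycles $y,z \in CS^\infty(D^2,F_n)$ and $u \in CS^\infty(D^2,F_{n+1})$ with $x = a_-^* y + a_+^* z + \partial u$, so $[x] = a_-^*[y] + a_+^*[z]$ in homology. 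Applying the inductive hypothesis to $[y]$ and $[z]$, and using that $a_\pm^* v_{w'} = v_{\pm w'}$ together with the fact that $a_\pm^*$ commute with $U$, expresses $[x]$ as a $\Z_2[U,U^{-1}]$-linear combination of the $v_w$ with $|w|=n$. This closes the induction.

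The second assertion is then immediate: the decomposition $\Z_2[U,U^{-1}] = \bigoplus_{j \in \Z} \Z_2 U^j$ of $\Z_2[U,U^{-1}]$ as a $\Z_2$-module converts any $\Z_2[U,U^{-1}]$-basis $\{v_w\}$ into the $\Z_2$-basis $\{U^j v_w\}_{j \in \Z,\, |w|=n}$. There is no real obstacle here, since the technical content has been packaged into the earlier lemmas; the only point requiring a moment's thought is that $U$ commutes with $\partial$ and with both creation and annihilation operators, so the $\Z_2[U,U^{-1}]$-module structure is compatible with every operation used in the induction.
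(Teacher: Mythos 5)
Your proposal is correct and follows essentially the same approach as the paper: induction on $n$ with the base case from the proposition identifying $HS^\infty(D^2,F_1)\cong\Z_2[U,U^{-1}]$, spanning from the $CS^\infty$ crossed wires lemma, linear independence from the preceding lemma, and the $\Z_2$-basis statement from the decomposition of $\Z_2[U,U^{-1}]$ over powers of $U$. You simply make explicit a few steps the paper leaves implicit (such as passing from the chain-level decomposition $x = a_-^*y + a_+^*z + \partial u$ to $[x]=a_-^*[y]+a_+^*[z]$ in homology, and the $U$-equivariance of the creation operators).
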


In particular,
\[
HS^\infty(D^2, F_n) \cong \Z_2[U,U^{-1}] \otimes \widehat{HS}(D^2, F_n) \cong \Z_2[U,U^{-1}] \otimes \left( \frac{ \widehat{CS}_{sut} (D^2, F_n) }{ \widehat{\Byp}(D^2, F_n) } \right),
\]
and theorem \ref{thm:rough_main_thm1} is proved.

For the remaining details of theorem \ref{thm:main_theorem_infty}, every element of $HS^\infty(D^2, F_n)$ is a $\Z_2[U,U^{-1}]$-linear combination of string diagrams which are sets of sutures. So the composition
\[
CS^\infty_{sut} (D^2, F_n) \To \frac{CS^\infty_{sut}(D^2,F_n)}{\Byp^\infty(D^2,F_n)} \To HS^\infty(D^2,F_n)
\]
is surjective, and comparing dimensions, we have
\[
HS^\infty(D^2, F_n) \cong \frac{ CS^\infty_{sut}(D^2, F_n)}{\Byp^\infty(D^2,F_n)}
\]
as $\Z_2[U,U^{-1}]$-modules. Decomposing over powers of $U$, we obtain part (ii) of theorem \ref{thm:main_theorem_infty}, completing the proof.

\addcontentsline{toc}{section}{References}

\small

\bibliography{danbib}

\providecommand{\bysame}{\leavevmode\hbox to3em{\hrulefill}\thinspace}
\providecommand{\MR}{\relax\ifhmode\unskip\space\fi MR }
\providecommand{\MRhref}[2]{%
  \href{http://www.ams.org/mathscinet-getitem?mr=#1}{#2}
}
\providecommand{\href}[2]{#2}
\begin{thebibliography}{10}

\bibitem{CS}
Moira Chas and Dennis Sullivan, \emph{String topology},
  {\small{\url{http://arxiv.org/abs/math/9911159}}}, 2008.

\bibitem{CL}
Kai Cieliebak and Janko Latschev, \emph{The role of string topology in
  symplectic field theory}, New perspectives and challenges in symplectic field
  theory, CRM Proc. Lecture Notes, vol.~49, Amer. Math. Soc., Providence, RI,
  2009, pp.~113--146. \MR{2555935 (2010j:53187)}

\bibitem{CGH_ECH_open_books}
Vincent Colin, Paolo Ghiggini, and Ko~Honda, \emph{Embedded contact homology
  and open book decompositions},
  {\small{\url{http://arxiv.org/abs/1008.2734}}}, 2010.

\bibitem{CGH10}
\bysame, \emph{Equivalence of {H}eegaard {F}loer homology and embedded contact
  homology via open book decompositions}, Proc. Natl. Acad. Sci. USA
  \textbf{108} (2011), no.~20, 8100--8105. \MR{2806645 (2012f:53190)}

\bibitem{CGH_HF_ECH_summary}
\bysame, \emph{{HF} = {ECH} via open book decompositions: a summary},
  {\small{\url{http://arxiv.org/abs/1103.1290}}}, 2011.

\bibitem{CGH_HF_ECH_3}
\bysame, \emph{The equivalence of heegaard floer homology and embedded contact
  homology {III}: from hat to plus},
  {\small{\url{http://arxiv.org/abs/1208.1256}}}, 2012.

\bibitem{CGH_HF_ECH_1}
\bysame, \emph{The equivalence of heegaard floer homology and embedded contact
  homology via open book decompositions {I}},
  {\small{\url{http://arxiv.org/abs/1208.1074}}}, 2012.

\bibitem{CGH_HF_ECH_2}
\bysame, \emph{The equivalence of heegaard floer homology and embedded contact
  homology via open book decompositions {II}},
  {\small{\url{http://arxiv.org/abs/1208.1077}}}, 2012.

\bibitem{CGHH}
Vincent Colin, Paolo Ghiggini, Ko~Honda, and Michael Hutchings, \emph{Sutures
  and contact homology {I}}, Geom. Topol. \textbf{15} (2011), no.~3,
  1749--1842. \MR{2851076}

\bibitem{EGH}
Y.~Eliashberg, A.~Givental, and H.~Hofer, \emph{Introduction to symplectic
  field theory}, Geom. Funct. Anal. (2000), no.~Special Volume, Part II,
  560--673, GAFA 2000 (Tel Aviv, 1999). \MR{MR1826267 (2002e:53136)}

\bibitem{Goldman86}
William~M. Goldman, \emph{Invariant functions on {L}ie groups and {H}amiltonian
  flows of surface group representations}, Invent. Math. \textbf{85} (1986),
  no.~2, 263--302. \MR{MR846929 (87j:32069)}

\bibitem{Golovko09}
Roman Golovko, \emph{The embedded contact homology of sutured solid tori {I}},
  {\small{\url{http://arxiv.org/abs/0911.0055}}}, 2009.

\bibitem{Hon00I}
Ko~Honda, \emph{On the classification of tight contact structures. {I}}, Geom.
  Topol. \textbf{4} (2000), 309--368 (electronic). \MR{MR1786111 (2001i:53148)}

\bibitem{HKM08}
Ko~Honda, William~H. Kazez, and Gordana Mati\'{c}, \emph{Contact structures,
  sutured {F}loer homology and {TQFT}},
  {\small{\url{http://arxiv.org/abs/0807.2431}}}, 2008.

\bibitem{Hutchings02}
Michael Hutchings, \emph{An index inequality for embedded pseudoholomorphic
  curves in symplectizations}, J. Eur. Math. Soc. (JEMS) \textbf{4} (2002),
  no.~4, 313--361. \MR{1941088 (2004b:53148)}

\bibitem{Ju06}
Andr{\'a}s Juh{\'a}sz, \emph{Holomorphic discs and sutured manifolds}, Algebr.
  Geom. Topol. \textbf{6} (2006), 1429--1457 (electronic). \MR{MR2253454
  (2007g:57024)}

\bibitem{Ju08}
\bysame, \emph{Floer homology and surface decompositions}, Geom. Topol.
  \textbf{12} (2008), no.~1, 299--350. \MR{MR2390347}

\bibitem{Kronheimer_Mrowka07}
Peter Kronheimer and Tomasz Mrowka, \emph{Monopoles and three-manifolds}, New
  Mathematical Monographs, vol.~10, Cambridge University Press, Cambridge,
  2007. \MR{2388043 (2009f:57049)}

\bibitem{KLT10_HF_HM_1}
Cagatay Kutluhan, Yi-Jen Lee, and Clifford~Henry Taubes, \emph{${HF}={HM}$ {I}:
  Heegaard floer homology and seiberg--witten floer homology},
  {\small{\url{http://arxiv.org/abs/1007.1979}}}, 2010.

\bibitem{KLT10_HF_HM_2}
\bysame, \emph{${HF}={HM}$ {II}: Reeb orbits and holomorpic curves for the
  ech/heegaard-floer correspondence},
  {\small{\url{http://arxiv.org/abs/1008.1595}}}, 2010.

\bibitem{KLT10_HF_HM_3}
\bysame, \emph{${HF}={HM}$ {III}: Holomorphic curves and the differential for
  the ech/heegaard floer correspondence},
  {\small{\url{http://arxiv.org/abs/1010.3456}}}, 2010.

\bibitem{KLT11_HF_HM_4}
\bysame, \emph{${HF}={HM}$ {IV}: The seiberg-witten floer homology and ech
  correspondence}, {\small{\url{http://arxiv.org/abs/1107.2297}}}, 2011.

\bibitem{KLT12_HF_HM_5}
\bysame, \emph{${HF}={HM}$ {V}: Seiberg-witten-floer homology and handle
  additions}, {\small{\url{http://arxiv.org/abs/1204.0115}}}, 2012.

\bibitem{LOT08}
Robert Lipshitz, Peter Ozsvath, and Dylan Thurston, \emph{Bordered {H}eegaard
  {F}loer homology: Invariance and pairing},
  {\small{\url{http://arxiv.org/abs/0810.0687}}}, 2008.

\bibitem{Me09Paper}
Daniel~V. Mathews, \emph{Chord diagrams, contact-topological quantum field
  theory, and contact categories}, Algebraic \& Geometric Topology \textbf{10}
  (2010), no.~4, 2091--2189.

\bibitem{Me10_Sutured_TQFT}
\bysame, \emph{Sutured {F}loer homology, sutured {TQFT} and noncommutative
  {QFT}}, Algebr. Geom. Topol. \textbf{11} (2011), no.~5, 2681--2739.
  \MR{2846909 (2012m:57057)}

\bibitem{Me11_torsion_tori}
\bysame, \emph{Sutured {TQFT}, torsion, and tori},
  {\small{\url{http://arxiv.org/abs/1102.3450}}}, submitted for publication,
  2011.

\bibitem{Me12_itsy_bitsy}
\bysame, \emph{Itsy bitsy topological field theory},
  {\small{\url{http://arxiv.org/abs/1201.4584}}}, 2012.

\bibitem{OS04Prop}
Peter Ozsv{\'a}th and Zolt{\'a}n Szab{\'o}, \emph{Holomorphic disks and
  three-manifold invariants: properties and applications}, Ann. of Math. (2)
  \textbf{159} (2004), no.~3, 1159--1245. \MR{MR2113020 (2006b:57017)}

\bibitem{OS04Closed}
\bysame, \emph{Holomorphic disks and topological invariants for closed
  three-manifolds}, Ann. of Math. (2) \textbf{159} (2004), no.~3, 1027--1158.
  \MR{MR2113019 (2006b:57016)}

\bibitem{Schoenfeld_Thesis}
Eric Schoenfeld, \emph{Higher symplectic field theory invariants for cotangents
  bundles of surfaces}, Ph.D. thesis, Stanford University, 2009.

\bibitem{Turaev91}
V.~G.. Turaev, \emph{Skein quantization of poisson algebras of loops on
  surfaces}, Ann. Sci. Ecole Norm. Sup. \textbf{4} (1991), no.~24, 635--704.

\bibitem{Whitney}
Hassler Whitney, \emph{On regular closed curves in the plane}, Compositio Math.
  \textbf{4} (1937), 276--284. \MR{MR1556973}

\bibitem{Zarev09}
Rumen Zarev, \emph{Bordered floer homology for sutured manifolds},
  {\small{\url{http://arxiv.org/abs/0908.1106}}}, 2009.

\end{thebibliography}
\bibliographystyle{amsplain}

\end{document}